\newtheorem{theorem}{Theorem}[section]
\newtheorem{lemma}[theorem]{Lemma}
\newtheorem{corollary}[theorem]{Corollary}
\newtheorem{remarkN}{Remark}
\theoremstyle{definition}
\newtheorem{construction}{Construction}
\newtheorem{proposition}{Proposition}
\def\PG{\mathrm{PG}}
 \def\B{\mathcal{B}} 
\def\D{\mathcal{D}}  \def\G{\mathcal{G}}
 \def\Q{\mathcal{Q}}
 \def\S{\mathcal{S}}
\def\F{\mathbb{F}}
\title{Constructing minimal blocking sets using field reduction}
\author{ Geertrui Van de Voorde\thanks{This author is supported by the Fund for Scientific Research Flanders
(FWO -- Vlaanderen).}}
\begin{document}
\maketitle
\begin{abstract} We present a construction for minimal blocking sets with respect to $(k-1)$-spaces in $\PG(n-1,q^t)$, the $(n-1)$-dimensional projective space over the finite field $\F_{q^t}$ of order $q^t$. The construction relies on the use of blocking cones in the {\em field reduced} representation of $\PG(n-1,q^t)$, extending the well-known construction of linear blocking sets. This construction is inspired by the construction for minimal blocking sets with respect to the hyperplanes by Mazzocca, Polverino and Storme ({\em the MPS-construction}); we show that for a suitable choice of the blocking cone over a planar blocking set, we obtain larger blocking sets than the ones obtained from planar blocking sets in \cite{pol}.

Furthermore we show that  every minimal blocking set with respect to the hyperplanes in $\PG(n-1,q^t)$ can be obtained by applying field reduction to a minimal blocking set with respect to $(nt-t-1)$-spaces in $\PG(nt-1,q)$. We end by relating these constructions to the linearity conjecture for small minimal blocking sets. We show that if a small minimal blocking set is constructed from the MPS-construction, it is of R\'edei-type whereas a small minimal blocking set arises from our cone construction if and only if it is linear.
\end{abstract}

{\bf Keywords:} field reduction, blocking set, Desarguesian spread, linear set, linearity conjecture
\section{Introduction}

This paper is inspired by the paper \cite{Leo}, where Mazzocca, Polverino and Storme construct minimal blocking sets with respect to the hyperplanes in $\PG(n,q^t)$ by using certain cones in the Barlotti-Cofman representation of $\PG(n,q^t)$, extending the results of \cite{pol} to general dimension. Our paper is organised as follows. In Section \ref{barlotti}, we give the necessary background on the Barlotti-Cofman and field reduced representation of $\PG(n,q)$ and recall the correspondence between these representations. In Section \ref{S3}, we explain how the construction of Mazzocca, Polverino and Storme (also called the {\em MPS-construction}) can be presented in an easier way by making use of field reduction: the obtained blocking set corresponds to the points of a minimal blocking set (with respect to subspaces of a particular dimension) in a projective space over $\F_q$, considered over $\F_{q^t}$. We also show that the construction of linear blocking sets and the recent construction by Costa \cite{costa} fit in this framework. By using cones in the field reduced representation of $\PG(n,q^t)$ we generalise the MPS-construction in Section \ref{S4}. Starting from a planar blocking set, we construct non-planar blocking sets with respect to $(k-1)$-spaces in $\PG(n-1,q^t)$. In Corollary \ref{crr}, we show that if we choose the defining blocking cone carefully, we construct blocking sets whose size exceeds the ones obtained from the MPS-construction using planar blocking sets.

Finally, in Section \ref{hyper}, we show that {\em every} minimal blocking set with respect to the hyperplanes in $\PG(n-1,q^t)$ can be obtained by applying field reduction to a minimal blocking set with respect to $(nt-t-1)$-spaces in $\PG(nt-1,q)$. This also provides us with a different view on the linearity conjecture for small minimal blocking sets. Finally, we show that if a small minimal blocking set is obtained by the MPS-construction then it is  of R\'edei-type, whereas a small minimal blocking set arises from our Construction \ref{con1} if and only if it is a linear blocking set. 

\section{The Barlotti-Cofman representation and field reduction}\label{barlotti}
\subsection{Desarguesian spreads and field reduction}
Throughout this paper, we let $\PG(m-1,q)$ denote the $(m-1)$-dimensional projective space over the finite field $\F_q$ of order $q$. A {\em $(t-1)$-spread} of a projective space $\PG(m-1,q)$ is a family of mutually disjoint subspaces of dimension $(t-1)$ partitioning the space $\PG(m-1,q)$. It is not hard to show that if a $(t-1)$-spread of $\PG(m-1,q)$ exists, then $t$ divides $m$. On the other hand, if $t$ divides $m$, there exists a $(t-1)$-spread of $\PG(m-1,q)$. This was already shown by Segre \cite{segre}, and can also be seen as follows.

By {\em field reduction} every point of $\PG(n-1,q^t)$ corresponds to a $1$-dimensional vector space over $\F_{q^t}$, which is a $t$-dimensional vector space over $\F_q$, and hence, also corresponds to a projective $(t-1)$-dimensional space over $\F_q$. The set of all $(t-1)$-spaces obtained in this way forms a spread of $\PG(nt-1,q)$, which is called a {\em Desarguesian} $(t - 1)$-spread. Throughout this paper, this $(t-1)$-spread in $\PG(nt-1,q)$ is fixed and is denoted by $\D$. A {\em $\D$-subspace} of $\PG(nt-1,q)$ is a space spanned by elements of $\D$. It follows from the construction that a $\D$-subspace is partitioned by elements of $\D$ and corresponds to a field reduced subspace $\pi$ of $\PG(n-1,q^t)$. If the subspace $\pi$ has dimension $r-1$, then we say that the $\D$-subspace of dimension $rt-1$ corresponding to $\pi$ is a $\D_{r-1}$-subspace.

The following statement is well-known and can be proven by a straightforward counting argument. It will be of use later in this paper.
\begin{lemma} \label{hyperspan} Every hyperplane of $\PG(nt-1,q)$ contains exactly one $\D_{n-2}$-subspace, i.e. an $(nt-t-1)$-space spanned by elements of $\D$. 
\end{lemma}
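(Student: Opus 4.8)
The plan is to prove the statement by a double counting argument, combined with a short observation that rules out a hyperplane containing two distinct $\D_{n-2}$-subspaces.

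First I would count the $\D_{n-2}$-subspaces of $\PG(nt-1,q)$. Since a $\D_{n-2}$-subspace corresponds to a field reduced hyperplane of $\PG(n-1,q^t)$, and field reduction induces a bijection between hyperplanes of $\PG(n-1,q^t)$ and $\D_{n-2}$-subspaces of $\PG(nt-1,q)$, their number equals the number of hyperplanes of $\PG(n-1,q^t)$, namely $\frac{q^{tn}-1}{q^t-1}$. Next, for a fixed $(nt-t-1)$-space $\Pi$ of $\PG(nt-1,q)$, the hyperplanes of $\PG(nt-1,q)$ containing $\Pi$ correspond to the hyperplanes of the quotient space $\PG(nt-1,q)/\Pi \cong \PG(t-1,q)$, so there are exactly $\frac{q^t-1}{q-1}$ of them. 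Counting incident pairs $(\Pi,H)$, with $\Pi$ a $\D_{n-2}$-subspace contained in the hyperplane $H$, in two ways then yields that the number of such pairs equals
\[
\frac{q^{tn}-1}{q^t-1}\cdot\frac{q^t-1}{q-1}=\frac{q^{tn}-1}{q-1},
\]
which is precisely the number of hyperplanes of $\PG(nt-1,q)$.

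It therefore suffices to show that no hyperplane of $\PG(nt-1,q)$ contains two distinct $\D_{n-2}$-subspaces: combined with the equality of the two counts above, this forces every hyperplane to contain exactly one. Suppose $\Pi_1\neq\Pi_2$ are $\D_{n-2}$-subspaces, both contained in a hyperplane $H$. The subspace spanned by $\Pi_1$ and $\Pi_2$ is spanned by the elements of $\D$ contained in $\Pi_1$ together with those contained in $\Pi_2$, hence is again a $\D$-subspace, and under field reduction it corresponds to the subspace of $\PG(n-1,q^t)$ spanned by the two distinct hyperplanes to which $\Pi_1$ and $\Pi_2$ correspond. Two distinct hyperplanes of $\PG(n-1,q^t)$ span the whole space $\PG(n-1,q^t)$, so the span of $\Pi_1$ and $\Pi_2$ is all of $\PG(nt-1,q)$, contradicting the fact that it is contained in $H$.

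The only slightly delicate point is this last step, where one uses that the span of two $\D$-subspaces is again a $\D$-subspace and that it corresponds, via field reduction, to the span of the associated subspaces of $\PG(n-1,q^t)$; this makes the "dimension jump" from $nt-t-1$ directly to $nt-1$ unavoidable, which is exactly what is needed. Everything else is a routine count with $q$-ary binomial-type quantities.
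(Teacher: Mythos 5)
Your proof is correct: the double count of incident pairs together with the observation that two distinct $\D_{n-2}$-subspaces span a $\D$-subspace corresponding to the span of two distinct hyperplanes of $\PG(n-1,q^t)$ (hence all of $\PG(nt-1,q)$) gives exactly the statement. The paper offers no written proof, only the remark that the lemma follows from "a straightforward counting argument", and your argument is precisely such a counting argument, so it matches the intended approach.
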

If $U$ is a subset of $\PG(nt-1,q)$, then we define $\B(U) := \{R \in \D \mid U \cap R \neq \emptyset\}$. In this paper, we identify the elements of $\B(U)$ with their corresponding points of $\PG(n-1, q^t)$. Linear sets can be defined in several equivalent ways, but using the terminology of this paper, an {\em $\F_q$-linear set $S$} in $\PG(n-1,q^{t})$ is a set of points such that $S=\B(\mu)$, where $\mu$ is a subspace of $\PG(nt-1,q)$. For more information on field reduction and linear sets, we refer to \cite{LaVa13}.

\subsection{The Barlotti-Cofman representation}

Let $H$ be a hyperplane of $\PG(n-1,q^t)$; by field reduction $H$ corresponds to a $\D_{n-2}$-subspace $\Sigma$ of $\PG(nt-1,q)$. Note that $\Sigma$ has dimension $nt-t-1$. Let $\Sigma'$ be an $(nt-t)$-space through $\Sigma$ in $\PG(nt-1,q)$.

Consider the following geometry $\Pi_{n-1}=\Pi_{n-1}(\Sigma',\Sigma,\S)$, where $\S$ is the set of elements of $\D$ contained in $\Sigma$:

\begin{itemize}
\item Points: the points of $\Sigma'\setminus \Sigma$ and the elements of $\S$.
\item Lines: the $t$-subspaces of $\Sigma'$ meeting $\Sigma$ exactly in an element of $\S$, together with the $\D_1$-subspaces contained in $\Sigma$.
\item Incidence: natural.
\end{itemize}

The incidence structure $\Pi_{n-1}$ is isomorphic to the design obtained by taking points and lines of $\PG(n-1,q^t)$ and we say that $\Pi_{n-1}$ is the {\em Barlotti-Cofman representation} of $\PG(n-1,q^t)$ \cite{Barlotti}. 

\subsection{The correspondence between the Barlotti-Cofman representation and the representation using field reduction}

From the definitions, we get that the geometry $\mathcal{G}$ with as points the spread elements of $\D$ and as lines the $\D_1$-spaces of $\PG(nt-1,q)$ is isomorphic to the design of points and lines of $\PG(n-1,q^t)$. Let, as in the previous section, $\Sigma'$ be an $(nt-t)$-space through the $\D_{n-2}$-space $\Sigma$ in $\PG(nt-1,q)$. 

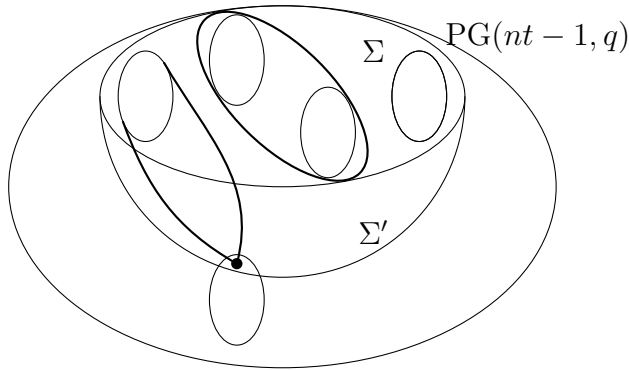
\begin{figure}[h]
\begin{center}
\begin{tikzpicture}[scale=0.6]
\draw (0,0) ellipse (6cm and 4cm);
\draw (0,2) ellipse (4cm and 2cm);
\draw(-4,2)  arc  (180:360:4cm and 4cm);
\draw (-3,2) ellipse (0.6cm and 1cm);
\draw (-1,2.8) ellipse (0.6cm and 1cm);
\draw (1,1.2) ellipse (0.6cm and 1cm);
\draw (3,2) ellipse (0.6cm and 1cm);
\draw (-1,-2.5) ellipse (0.6cm and 1cm);
\draw (3,2) ellipse (0.6cm and 1cm);
\draw[thick,rotate around={135:(0,2)}]  (0,2) ellipse (2.4cm and 1.1cm);

\draw[thick] (-1,-1.7) to [out=75, in=300] (-2.6,2.75);
\draw[thick] (-1,-1.7) to [out=150,in=290] (-3.5,1.45);

\node at (5.6,3.3) { $\mathrm{PG}(nt-1,q)$};
\node at (2,3) {  $\Sigma$};
\node at (2,-1) {  $\Sigma'$};
\node[fill=black,circle,inner sep=1.5pt] at (-1,-1.7) {};
\end{tikzpicture}

\caption{The Barlotti-Cofman representation inside the field reduction representation}
\end{center}

\end{figure}

It is clear that the Barlotti-Cofman representation $\Pi_{n-1}$ and $\mathcal{G}$ are isomorphic: consider the following mapping $\phi$

\begin{eqnarray*}
\phi: \G &\rightarrow& \Pi_{n-1}\\
R\not\subset \Sigma &\mapsto& R\cap \Sigma' \mbox{ for }R\in \D\\
R\subset \Sigma &\mapsto& R \mbox{ for }R\in \D\\
L \not\subset \Sigma &\mapsto& L\cap \Sigma',\mbox{ for } L \mbox{ a $\D_1$-space}\\
L \subset \Sigma &\mapsto& L, \mbox{ for } L \mbox{ a $\D_1$-space}.
\end{eqnarray*}

It is easy to see that $\phi$ defines an isomorphism between $\mathcal{G}$ and $\Pi_{n-1}$. This isomorphism will enable us to describe the MPS-construction in an easier way.

\section{Blocking sets}\label{S3}
A {\em blocking set} $B$ in $\PG(n,q)$ {\em with respect to $k$-spaces} is a set of points such that every $k$-dimensional space in $\PG(n,q)$ (or {\em $k$-space}) contains at least one point of $B$. We also say that the set $B$ {\em blocks} all $k$-spaces. If we are considering blocking sets with respect to the hyperplanes, we simply say that $B$ is a blocking set. A {\em minimal} blocking set $B$ (w.r.t. $k$-spaces) is a blocking set such that no proper subset of $B$ is a blocking set (w.r.t. $k$-spaces). An {\em essential point} of a blocking set with respect to $k$-spaces $B$ is a point lying on a tangent $k$-space to $B$ and we see that $B$ is minimal if and only if every point of $B$ is essential. A blocking set w.r.t. $k$-spaces in $\PG(n,q)$ is called {\em trivial} if it contains an $(n-k)$-space. 
A {\em small} blocking set in $\PG(n,q)$ with respect to $k$-spaces is a blocking set of size smaller than $3(q^{n-k}+1)/2$. A blocking set $B$ (w.r.t. hyperplanes) in $\PG(n,q)$ is of {\em R\'edei-type} if there exists a hyperplane meeting $B$ in $\vert B\vert-q$ points.

Most constructions for blocking sets concern the planar case or the case of blocking sets with respect to the hyperplanes in $\PG(n,q)$. For blocking sets with respect to $k$-spaces, $k\neq n-1$, there are results of Beutelspacher \cite{beutelspacher}  and Heim \cite{heim} classifying the smallest non-trivial blocking sets as cones with base a blocking set in a plane. Other results characterise the smallest blocking sets that span a space of a certain fixed dimension \cite{bokler} or aim at classifying small minimal blocking sets as {\em linear sets} (see Section \ref{hyper}).
\subsection{The cone construction for blocking sets}
We recall that the {\em cone} $K$ with {\em vertex} $\Omega$, where $\Omega$ is a subspace of $\PG(n,q)$ and {\em base} $\bar{B}$, contained in a subspace $\Gamma$, skew from $\Omega$, is the set $\cup_{\bar{P}\in \bar{B}}\langle \bar{P},\Omega\rangle.$

The following lemma is well-known, but since we did not find an exact reference, we give a proof for completeness.

\begin{lemma}\label{kegel} Let $\Omega$ be an $s$-dimensional subspace of $\PG(n,q)$, let $\Gamma$ be an $(n-s-1)$-space disjoint from $\Omega$. The set $\bar{B}$ is a minimal blocking set with respect to $k$-spaces of the space $\Gamma=\PG(n-s-1,q)$ ($k<n-s-1$), if and only if the cone $K$ with vertex $\Omega$ and base $\bar{B}$ is a minimal blocking set with respect to $k$-spaces of $\PG(n,q)$. 
\end{lemma}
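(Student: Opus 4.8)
The plan is to prove both implications by analysing how $k$-spaces of $\PG(n,q)$ meet the cone $K$, using the elementary fact that any $k$-space $\pi$ with $k < n-s-1$ projects from the vertex $\Omega$ onto a well-defined subspace of $\Gamma$. Concretely, let $\rho$ denote the projection map from $\Omega$ onto $\Gamma$ (sending a point $P \notin \Omega$ to the intersection $\langle P, \Omega\rangle \cap \Gamma$, and a subspace to the span of the images of its points); the key observation is that the cone $K$ is exactly $\rho^{-1}(\bar B) \cup \Omega = \{P : \rho(P) \in \bar B\} \cup \Omega$, and for any subspace $\pi$ we have $K \cap \pi \neq \emptyset$ if and only if either $\pi$ meets $\Omega$ or $\rho(\pi) \cap \bar B \neq \emptyset$.

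For the ``if'' direction (cone blocking $\Rightarrow$ base blocking), I would argue contrapositively: if $\bar B$ is not a blocking set with respect to $k$-spaces of $\Gamma$, there is a $k$-space $\bar\pi \subseteq \Gamma$ disjoint from $\bar B$; since $\bar\pi$ lies in $\Gamma$, it is disjoint from $\Omega$, and $\rho(\bar\pi) = \bar\pi$ misses $\bar B$, so $\bar\pi$ misses $K$, contradicting that $K$ blocks $k$-spaces. For the ``only if'' direction (base blocking $\Rightarrow$ cone blocking), take any $k$-space $\pi$ of $\PG(n,q)$. If $\pi \cap \Omega \neq \emptyset$ we are done since $\Omega \subseteq K$. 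Otherwise $\pi$ is disjoint from $\Omega$, so $\rho$ restricted to $\pi$ is injective and $\rho(\pi)$ is a $k$-space of $\Gamma$; since $\bar B$ blocks $k$-spaces of $\Gamma$ and $k < n-s-1 = \dim\Gamma$, there is a point $\bar P \in \bar B \cap \rho(\pi)$, hence a point of $\pi$ mapping to $\bar P$, which lies on the generator $\langle \bar P, \Omega\rangle \subseteq K$. Here one must check the dimension count $\dim\rho(\pi) = k$ genuinely requires $\pi \cap \Omega = \emptyset$ and that $k \leq \dim\Gamma$, which is why the hypothesis $k < n-s-1$ (rather than $\leq$) is slightly more than needed for blocking but is the natural range.

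The minimality statements require more care, and this is where I expect the main obstacle to lie: one must show that an essential point of $\bar B$ lifts to a whole generator of essential points of $K$, and conversely that a minimal $K$ forces $\bar B$ minimal. For minimality, suppose $\bar B$ is minimal and let $P \in K \setminus \Omega$, say $\rho(P) = \bar P \in \bar B$. Since $\bar P$ is essential in $\bar B$, there is a $k$-space $\bar\pi \subseteq \Gamma$ with $\bar\pi \cap \bar B = \{\bar P\}$. I would then take $\pi := \langle P, \text{(some } (k-1)\text{-subspace of } \rho^{-1}(\bar\pi) \text{ avoiding } \Omega)\rangle$ — more carefully, lift $\bar\pi$ to a $k$-space $\pi$ through $P$ inside the $(k+s+1)$-space $\langle \bar\pi, \Omega\rangle$ that is disjoint from $\Omega$ (such a lift exists because the cone $\langle \bar\pi,\Omega\rangle$ over $\bar\pi$ contains many $k$-spaces transversal to $\Omega$ through any of its points off $\Omega$). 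Then $\pi \cap K = \{P\}$, so $P$ is essential; and one also checks points of $\Omega$ are essential by a similar lifting of a tangent $k$-space at \emph{any} point of $\bar B$. Conversely, if $K$ is minimal then every point of $K$, in particular every point of $\bar B \subseteq \Gamma \subseteq K$, is essential \emph{in $K$}; projecting a tangent $k$-space through $\bar P \in \bar B$ from $\Omega$ (after noting it must be disjoint from $\Omega$, else it would meet $K$ in more than one point) yields a tangent $k$-space to $\bar B$ in $\Gamma$, so $\bar B$ is minimal. The delicate point throughout is the existence of the transversal lift $\pi$ of $\bar\pi$ through a prescribed point, which I would handle by a direct dimension argument inside $\langle\bar\pi,\Omega\rangle \cong \PG(k+s+1,q)$: the set of $k$-spaces through $P$ disjoint from $\Omega$ and projecting onto $\bar\pi$ is nonempty because $\bar\pi$ itself, translated to pass through $P$, works once one verifies the relevant subspaces are in general position.
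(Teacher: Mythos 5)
Your proposal is correct and follows essentially the same route as the paper: project $k$-spaces from the vertex $\Omega$ onto $\Gamma$ to get the blocking property, lift a tangent $k$-space $\bar\pi$ of $\bar{B}$ to a $k$-space through $P$ inside $\langle\Omega,\bar\pi\rangle$ transversal to $\Omega$ for minimality of $K$ (handling points of $\Omega$ via a $(k-1)$-space of a tangent $k$-space avoiding the tangency point), and project tangent $k$-spaces of $K$ back into $\Gamma$ for minimality of $\bar{B}$. The transversal lift you single out as the delicate step is exactly what the paper does (a $k$-subspace of $\langle\Omega,T_{S'}\rangle$ meeting the generator $\langle\Omega,S'\rangle$ only in $S$), and your dimension argument for its existence is sound, so there is no gap.
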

\begin{proof} First assume that $\bar{B}$ is a minimal blocking set with respect to $k$-spaces of $\Gamma$. Let $\mu$ be a $k$-space of $\PG(n,q)$. If $\mu$ meets $\Omega$, then $K$ blocks $\Omega$, so assume that $\mu$ is skew from $\Omega$. The projection of $\mu$ from $\Omega$ onto $\Gamma$ (i.e. $\langle \Omega,\mu\rangle \cap \Gamma$) is a $k$-space $\mu'$, which contains at least one point $P$ of $\bar{B}$. This implies that $\mu$ meets $\langle \Omega, P\rangle$ in at least one point, hence, $\mu$ contains a point of $K$. This shows that $K$ is a blocking set. We will now show that $K$ is minimal. Since $\bar{B}$ is a minimal blocking set with respect to $k$-spaces in $\Gamma$, every point $Q$ of $\bar{B}$ lies on a tangent $k$-space $T_Q$. Let $\nu$ be a $(k-1)$-space in $T_Q$, not through $Q$, then for every point $R$ of $\Omega$, $\langle R,\nu\rangle$ is a tangent $k$-space to $K$ through the point $R$, so every point of $\Omega$ is essential. Now let $S$ be a point of $K$, not in $\Omega$. The projection of $S$ onto $\Gamma$ is a point $S'$ of $\bar{B}$, lying on a tangent $k$-space $T_{S'}$. The space $\langle \Omega, T_{S'}\rangle$ is $(k+s+1)$-dimensional, hence, we can take a $k$-dimensional subspace of $\langle \Omega, T_{S'}\rangle$, meeting $\langle \Omega,S'\rangle$ in only the point $S$, which is a tangent $k$-space through $S$ to $K$.

Conversely, if $K$ is a minimal blocking set, every $k$-space in $\PG(n,q)$, and hence in $\Gamma$ is blocked by the points of $K$, hence, $\bar{B}$ is a blocking set with respect to $k$-spaces. Since $K$ is minimal, there exists a tangent $k$-space $T_P$ to $K$ through every point $P$ of $\bar{B}$. It is clear that the projection of $T_P$ from $\Omega$ onto $\Gamma$ is a tangent $k$-space through $P$ to $\bar{B}$, hence, $\bar{B}$ is minimal.
\end{proof}
\begin{remarkN} The cone $K$ with vertex $\Omega$, where $\Omega$ is an $s$-dimensional subspace of $\PG(n,q)$, and base $\bar{B}$ (contained in an $(n-s-1)$-space $\Gamma$, skew from $\Omega$) has size $q^{s+1}|\bar{B}|+\frac{q^{s+1}-1}{q-1}$.

\end{remarkN}

\subsection{Linear blocking sets and the MPS-construction}
The following lemma is essentially a trivial observation, but it is the key idea behind the constructions presented in this paper.

\begin{lemma}\label{triviaal} Let $B'$ be a blocking set with respect to $(kt-1)$-spaces in $\PG(nt-1,q)$, then $B=\B(B')$ is a blocking set with respect to $(k-1)$-spaces in $\PG(n-1,q^t)$.
\end{lemma}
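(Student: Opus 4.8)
The plan is to use the field reduction correspondence together with the fact that a $(k-1)$-space $\pi$ of $\PG(n-1,q^t)$ corresponds, under field reduction, to a $\D_{k-1}$-subspace of $\PG(nt-1,q)$, that is, a $(kt-1)$-dimensional subspace spanned by elements of the Desarguesian spread $\D$. The key observation is that $\B(\cdot)$ translates ``meeting a spread element'' into ``containing the corresponding point'', so blocking in $\PG(nt-1,q)$ transfers to blocking in $\PG(n-1,q^t)$ exactly along these $\D$-subspaces.

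Concretely, I would argue as follows. Let $\pi$ be an arbitrary $(k-1)$-space of $\PG(n-1,q^t)$; we must show $B = \B(B')$ contains a point of $\pi$. By field reduction, $\pi$ corresponds to a $\D_{k-1}$-subspace $\tilde{\pi}$ of $\PG(nt-1,q)$, which has projective dimension $kt-1$ and is partitioned by the spread elements of $\D$ lying in it (these spread elements are precisely the points of $\pi$, under the identification of $\B(\cdot)$ with points of $\PG(n-1,q^t)$). Since $\tilde{\pi}$ is a $(kt-1)$-space in $\PG(nt-1,q)$ and $B'$ blocks all $(kt-1)$-spaces, there is a point $x \in B' \cap \tilde{\pi}$. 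This point $x$ lies in some spread element $R \in \D$, and since $x \in \tilde{\pi}$ and $\tilde{\pi}$ is partitioned by spread elements, we have $R \subseteq \tilde{\pi}$, so $R$ corresponds to a point of $\pi$. On the other hand, $x \in R \cap B' \neq \emptyset$ gives $R \in \B(B') = B$. Hence $B$ contains the point of $\PG(n-1,q^t)$ corresponding to $R$, and this point lies on $\pi$. Since $\pi$ was arbitrary, $B$ blocks all $(k-1)$-spaces.

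There is essentially only one point that needs a line of justification rather than being a pure unwinding of definitions: the claim that if a point $x$ of a $\D$-subspace $\tilde\pi$ lies in a spread element $R\in\D$, then the whole of $R$ is contained in $\tilde\pi$. This is precisely the statement recalled in Section~\ref{barlotti} that a $\D$-subspace is partitioned by the elements of $\D$ it contains; equivalently, $\D$ restricted to $\tilde\pi$ is a $(t-1)$-spread of $\tilde\pi$, so the unique spread element through $x$ lies inside $\tilde\pi$. I expect this to be the only genuine obstacle, and it is mild since it is already available from the setup. Note also that no minimality is asserted in the statement, so there is nothing further to check; I would simply remark (or leave to the reader) that the analogous statement for minimality is false in general, which is part of the motivation for the more careful constructions in the later sections.
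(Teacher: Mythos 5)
Your proposal is correct and is essentially the paper's own argument, merely spelled out in more detail: both reduce to the observation that $B'$ in particular blocks the $\D_{k-1}$-subspaces, and that any point of $B'$ in such a subspace lies in a spread element wholly contained in it, hence yields a point of $\B(B')$ on the corresponding $(k-1)$-space. The one step you flag for justification (that a $\D$-subspace is partitioned by the spread elements it meets) is indeed exactly what the paper's setup in Section~\ref{barlotti} provides.
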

\begin{proof} As $B'$ blocks all $(kt-1)$-spaces in $\PG(nt-1,q)$, it also blocks the $(kt-1)$-spaces spanned by spread elements (i.e. the $\D_{k-1}$-spaces), which means that all $(k-1)$-spaces of $\PG(n-1,q^t)$ contain at least one point of $\B(B')$.
\end{proof}
The previous lemma provides us with a way of creating blocking sets $B$ in $\PG(n-1,q^t)$, using blocking sets $B'$ in $\PG(nt-1,q)$. An important problem is to determine when the obtained blocking set $B$ is minimal. In the MPS-construction and the construction of Costa, particular minimal blocking sets $B'$ are considered to ensure that $\B(B')$ is minimal. In the next subsections, we recall these constructions and translate them from the Barlotti-Cofman setting to the setting using field reduction which enables an easier description. 
Since {\em linear} blocking sets also fit in this framework and will be used later in this paper, we discuss them here.

\subsubsection{Linear blocking sets}
Linear blocking sets with respect to $(k-1)$-spaces in $\PG(n-1,q^t)$ were introduced by Lunardon \cite{L1}: he argues that an $\F_q$-linear set of rank $nt-kt+1$ is a blocking set. In view of Lemma \ref{triviaal}, this is clear: we take $B'$ to be an $(nt-kt)$-dimensional subspace of $\PG(nt-1,q)$, which is a blocking set with respect to $(kt-1)$-spaces, to obtain a (linear) blocking set $\B(B')$.

The importance of this construction lies in the fact that it provided counterexamples to the belief that all small minimal blocking set were of R\'edei-type \cite{polito2}. We will come back to this in Section \ref{linconj}.

It is important to note that linear blocking sets are necessarily minimal blocking sets. This was shown in \cite{L2} for $\F_q$-linear blocking sets with respect to lines in $\PG(n-1,q^t)$. But for blocking sets with respect to $(k-1)$-spaces in $\PG(n-1,q^t)$, $k\neq 2$, the minimality is up to our knowledge not directly proven in the literature; it does however follow easily from the following lemma of Sz\H{o}nyi and Weiner.

\begin{lemma} \cite[Lemma 3.1]{sz} Let $B$ be a blocking set of $\PG(n-1, q)$ with respect to $(k-1)$-dimensional subspaces, $q=p^h$, $p$ prime, and suppose that $|B|\leq 2q^{n-k}$. Assume that each $(k-1)$-dimensional subspace of $\PG(n, q)$ intersects $B$ in $1$ mod $p$ points. Then $B$ is minimal.
\end{lemma}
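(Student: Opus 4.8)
The plan is to establish minimality by showing that every point of $B$ is essential. Suppose, for a contradiction, that some $P\in B$ lies on no tangent $(k-1)$-space. Since every $(k-1)$-space of $\PG(n-1,q)$ meets $B$ in $1\bmod p$ points and none of those through $P$ is tangent, every $(k-1)$-space through $P$ meets $B$ in at least $p+1$ points.

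The heart of the proof is a double count of the incident pairs $(Q,\pi)$ with $Q\in B$, $\pi$ a $(k-1)$-space through $P$, and $Q\in\pi$. Counting the spaces $\pi$ first gives $\sum_{\pi\ni P}|\pi\cap B|$, which by the previous paragraph is at least $(p+1)\,\tau$, where $\tau$ denotes the number of $(k-1)$-spaces through a fixed point. Counting the points $Q$ first, and isolating the term $Q=P$, gives $\tau+(|B|-1)\,\sigma$, where $\sigma$ is the number of $(k-1)$-spaces through two fixed points (we may assume $k\ge 2$, so that $\sigma\ge 1$; the case $k=1$ is immediate). Both constants are classical Gaussian binomials, $\tau=\binom{n-1}{k-1}_q$ and $\sigma=\binom{n-2}{k-2}_q$, linked by $\tau=\frac{q^{n-1}-1}{q^{k-1}-1}\,\sigma$. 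Comparing the two counts yields $p\,\tau\le(|B|-1)\,\sigma$, that is, $|B|-1\ge p\,\frac{q^{n-1}-1}{q^{k-1}-1}$.

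It remains to observe that $\frac{q^{n-1}-1}{q^{k-1}-1}>q^{n-k}$: clearing denominators, this is equivalent to $q^{n-k}>1$, which holds because a $(k-1)$-space is a proper subspace, so $k<n$. Combining with $p\ge 2$ we get $|B|-1\ge p\,q^{n-k}\ge 2q^{n-k}$, contradicting the hypothesis $|B|\le 2q^{n-k}$. Hence every $P\in B$ is essential, and therefore $B$ is minimal.

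The argument is routine; the only steps needing a little care are fixing the two combinatorial constants $\tau,\sigma$ correctly and checking that the resulting bound already exceeds $2q^{n-k}$ in characteristic $2$ — which it does, since $p\ge 2$ and the ratio $\frac{q^{n-1}-1}{q^{k-1}-1}$ is \emph{strictly} larger than $q^{n-k}$. (Alternatively one could deduce the statement from the lacunary-polynomial machinery used by Sz\H{o}nyi and Weiner, but under the present $1\bmod p$ hypothesis the direct double count is shorter.)
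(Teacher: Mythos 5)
Your proof is correct: the reduction to essentiality, the double count of incident pairs $(Q,\pi)$, the values $\tau=\binom{n-1}{k-1}_q$ and $\sigma=\binom{n-2}{k-2}_q$ with $\tau/\sigma=\frac{q^{n-1}-1}{q^{k-1}-1}>q^{n-k}$, and the final contradiction with $|B|\le 2q^{n-k}$ all check out (including the harmless edge cases $k=1$ and $k<n$). The paper itself only cites this lemma from Sz\H{o}nyi--Weiner without reproducing an argument, so there is no in-paper proof to compare against; your double count is the standard proof of this fact and is essentially the one in the cited source.
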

Since every subspace meets an $\F_q$-linear blocking set in $1$ mod $q$ points, and an $\F_q$-linear blocking set with respect to $(k-1)$-spaces in $\PG(n-1,q^t)$ has size at most $(q^{nt-kt+1}-1)/(q-1)$, this implies the following.
\begin{corollary}\label{linmin} An $\F_q$-linear blocking set with respect to $(k-1)$-spaces in $\PG(n-1,q^t)$ is minimal.
\end{corollary}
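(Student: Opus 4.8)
The plan is to deduce the minimality of a linear blocking set $B$ with respect to $(k-1)$-spaces in $\PG(n-1,q^t)$ straight from the lemma of Sz\H{o}nyi and Weiner quoted above, applied inside $\PG(n-1,q^t)$ with the role of the prime power $q$ played by $q^t$ while the underlying prime stays $p$ (recall $q=p^h$, so $q^t=p^{ht}$). Concretely, I would verify the two hypotheses of that lemma for $B$: that $|B|\le 2(q^t)^{n-k}$, and that every $(k-1)$-space of $\PG(n-1,q^t)$ meets $B$ in $1\bmod p$ points.

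For the size condition I would use that, by definition, $B=\B(\mu)$ for an $(nt-kt)$-space $\mu$ of $\PG(nt-1,q)$, so $B$ is an $\F_q$-linear set of rank $nt-kt+1$ and hence $|B|\le (q^{nt-kt+1}-1)/(q-1)=1+q+\cdots+q^{t(n-k)}$. A one-line estimate of this geometric sum gives $1+q+\cdots+q^{t(n-k)}<2q^{t(n-k)}=2(q^t)^{n-k}$ for every $q\ge 2$ (equivalently $2-q^{-t(n-k)}<2\le q$), so the size hypothesis holds with room to spare. For the intersection condition I would show that any $(k-1)$-space $\pi$ meets $B$ in $1\bmod q$ points, which a fortiori gives $1\bmod p$. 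Since $B$ blocks all $(k-1)$-spaces, $B\cap\pi\neq\emptyset$; and if $\Sigma_\pi$ denotes the $\D_{k-1}$-subspace of $\PG(nt-1,q)$ corresponding to $\pi$, then a spread element lying in $\Sigma_\pi$ meets $\mu$ iff it meets $\mu\cap\Sigma_\pi$, and conversely any spread element meeting $\mu\cap\Sigma_\pi$ lies in $\Sigma_\pi$ because $\D$ is a partition; hence $B\cap\pi=\B(\mu\cap\Sigma_\pi)$ is again an $\F_q$-linear set. Finally I would invoke the well-known fact (see \cite{LaVa13}) that a non-empty $\F_q$-linear set has size $\equiv 1\pmod q$ — which follows from a short count in the vector space representation, since writing $q^r-1$ as a sum of terms $q^{i_P}-1$ with $i_P\ge 1$, one over each point, forces $|B\cap\pi|\equiv 1\pmod q$ — so that $|B\cap\pi|\equiv 1\pmod p$.

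With both hypotheses in place, the Sz\H{o}nyi--Weiner lemma (with $q^t$ in place of $q$) gives that $B$ is minimal. I do not expect a genuine obstacle: the only non-bookkeeping ingredients are the two standard structural facts about $\F_q$-linear sets — that their intersections with subspaces are again linear sets, and that non-empty ones have cardinality $1$ modulo $q$ — together with the elementary size estimate; the one point to keep in mind is that the prime entering the Sz\H{o}nyi--Weiner hypothesis is $p$, not $q^t$.
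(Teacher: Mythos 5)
Your proof is correct and follows exactly the route the paper takes: it applies the quoted Sz\H{o}nyi--Weiner lemma (with $q^t$ in place of $q$ and underlying prime $p$), using the rank bound $|B|\le (q^{nt-kt+1}-1)/(q-1)<2(q^t)^{n-k}$ and the fact that every subspace meets an $\F_q$-linear set in $1\bmod q$ points. You have merely written out the details that the paper leaves implicit in its one-sentence justification.
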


The fact that linear blocking sets are minimal will follow directly from Theorem \ref{hoofd1}.

\subsubsection{The MPS-construction}\label{MPS}
The MPS-construction goes as follows. Let $\S$ be a Desarguesian $(t-1)$-spread of $\Sigma=\PG(nt-t-1,q)$, embed $\Sigma$ as a hyperplane in $\Sigma'=\PG(nt-t,q)$ and consider the Barlotti-Cofman representation of $\PG(n-1,q^t)$ as $\Pi_{n-1}(\Sigma',\Sigma,\S)$ as described in Subsection \ref{barlotti}. Let $Y$ be a fixed element of $\S$ and let $\Omega$ be a hyperplane of $Y$. Let $\Gamma'$ be an $(nt-2t+1)$-dimensional subspace of $\Sigma'$, disjoint from $\Omega$ and denote by $\Gamma$ the $(nt-2t)$-dimensional subspace intersection of $\Gamma'$ and $\Sigma$ and by $T$ the intersection point of $\Gamma$ and $Y$. Let $\bar{B}$ be a blocking set of $\Gamma'$ such that $\bar{B}\cap \Gamma=\{Q\}$, $Q$ a point, with the property that $\ell\setminus \{T\}\not \subset \bar{B}$, for every line $\ell$ of $\Gamma'$ through $T$. Denote by $K$ the cone with vertex $\Omega$ and base $\bar{B}$. Let $B$ be the subset of $\Pi_{n-1}$ defined by $$B=(K\setminus \Sigma)\cup \{X\in \S: X\cap K\neq \emptyset\}.$$

\begin{figure}[h]
\begin{center}
\begin{tikzpicture}[scale=0.8]

\draw (0,2) ellipse (4cm and 2cm);
\draw(-4,2)  arc  (180:360:4cm and 4cm);

\draw (-3,2) ellipse (0.6cm and 1cm);
\draw (-1,2.8) ellipse (0.6cm and 1cm);
\draw (1,1.2) ellipse (0.6cm and 1cm);
\draw (3,2) ellipse (0.6cm and 1cm);
\draw[rotate around={45:(-1.7,0.2)}] (-1.7,0.2) ellipse (1.2cm and 1.7cm);
\draw (-1.3,-0.5) ellipse (0.4cm and 0.7cm);
\draw[thick,dashed] (-3.2,1.6)--(-1.6,-1);

\draw[thick,dashed] (-2.7,2.7)--(-1,0);

\draw (-3,2.2) ellipse (0.4cm and 0.7cm);
\node at (2.8,3.8) {  $\Sigma$};
\node at (2,-1) {  $\Sigma'$};
\node at (-3,2) { $\Omega$};

\node at (-3.5,0.7) { $T$};

\node at (-1.3,-0.5) { ${\bar{B}}$};
\node at (-0.2,0.7) { $\Gamma$};
\node at (0,-0.5) { $\Gamma'$};
\node[fill=black,circle,inner sep=1pt] at (-3,1.1) {};

\end{tikzpicture}
\caption{The MPS-construction}
\end{center}
\end{figure}
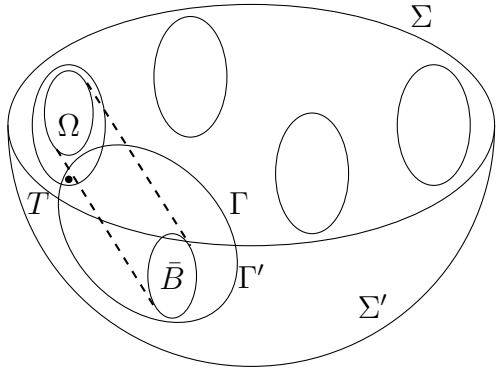

With these definitions, the authors show:
\begin{theorem}{\rm \cite[Proposition 1]{Leo}}\label{MPS1} The set $B$ is a blocking set of the projective space $\PG(n-1,q^t)$.
\end{theorem}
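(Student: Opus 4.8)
The plan is to use the isomorphism $\phi$ between $\G$ and the Barlotti--Cofman representation $\Pi_{n-1}$ (described in Section~\ref{barlotti}), together with Lemma~\ref{triviaal} and Lemma~\ref{kegel}, to reduce the statement to a purely field-reduction question about a cone. First I would observe that, under $\phi^{-1}$, the set $B\subseteq\Pi_{n-1}$ pulls back to a subset $B'$ of $\G$, i.e.\ to a set of spread elements of $\D$ in $\PG(nt-1,q)$: a point of $K\setminus\Sigma$ lying on the spread element $R\not\subset\Sigma$ corresponds to $R$, and a spread element $X\in\S$ with $X\cap K\neq\emptyset$ corresponds to $X$ itself. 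Thus $B'=\{R\in\D\mid R\cap K\neq\emptyset\}=\B(K)$, where $K$ is the cone in $\PG(nt-1,q)$ with vertex $\Omega$ and base $\bar B$. By Lemma~\ref{triviaal}, it then suffices to show that $K$ is a blocking set with respect to $(t-1)$-spaces in $\PG(nt-1,q)$ (so that $\B(K)$ blocks all points of $\PG(n-1,q^t)$, i.e.\ all $0$-spaces, which is exactly what ``blocking set of $\PG(n-1,q^t)$'' means after identifying $\B$-elements with points).

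Next I would analyze the cone $K$ directly. By Lemma~\ref{kegel} (with $s=\dim\Omega=t-2$, $k=0$, and $\Gamma=\Gamma'$ the $(nt-2t+1)$-space disjoint from $\Omega$), the cone $K$ with vertex $\Omega$ over a blocking set $\bar B$ of $\Gamma'$ with respect to $0$-spaces is a blocking set with respect to $0$-spaces of $\langle\Omega,\Gamma'\rangle=\PG(nt-t,q)=\Sigma'$; restricting to $\Sigma$ this handles all points of $\Sigma$. But we need more: we need $K$ to meet every $(t-1)$-space of $\PG(nt-1,q)$, not merely every point. The key point is that a $(t-1)$-space $\tau$ either meets $\Omega$ (done, since $\Omega\subset K$), or projects from $\Omega$ onto $\Gamma'$ to a subspace $\tau'$ of dimension $\dim\tau-\dim(\tau\cap\langle\Omega,\ldots\rangle)$; since $\bar B$ blocks all $0$-spaces (points) of $\Gamma'$, in particular every point of $\tau'$ lies in $\bar B$ is too strong, so instead I would argue that $\langle\Omega,\tau\rangle\cap\Gamma'$ is a subspace of dimension $\ge 0$, pick a point $P$ of $\bar B$ in it, and conclude $\tau$ meets $\langle\Omega,P\rangle\subseteq K$ exactly as in the proof of Lemma~\ref{kegel}. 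Here one must be careful that $\langle\Omega,\tau\rangle$ genuinely meets $\Gamma'$: since $\dim\Omega+\dim\tau = (t-2)+(t-1)$ and $\Gamma'$ has codimension $t-1$ in $\Sigma'$ while $\langle\Omega,\tau\rangle\subseteq\Sigma'$ once we also check $\tau\subseteq\Sigma'$ --- which need not hold, as $\tau$ can stick out of $\Sigma'$.

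The main obstacle, therefore, is the $(t-1)$-spaces $\tau$ that are not contained in $\Sigma'$. For these I would exploit the structure of the Barlotti--Cofman isomorphism: a $(t-1)$-space $\tau\not\subset\Sigma'$ meets $\Sigma'$ in a subspace of dimension $t-2$, and I would project $\tau$ onto $\Sigma'$ from a suitable point, or alternatively invoke the hyperplane structure --- every $(t-1)$-space lies in some $\D_{k-1}$-like configuration --- to reduce to the bounded case. The cleanest route is probably: show that $\B(K)=\B(K\cap\Sigma')$ is unchanged when we replace $K$ by $K\cap\Sigma'$ up to the spread elements not in $\Sigma$, then apply Lemma~\ref{triviaal} with $k=1$ to the blocking set $K$ of $\PG(nt-t,q)=\Sigma'$ with respect to $0$-spaces, after verifying every spread element of $\D$ meets $\Sigma'$ (true, since $\dim\Sigma'=nt-t$ and spread elements have dimension $t-1$, but disjointness would require $nt-t + (t-1) < nt-1$, i.e.\ is false, so every spread element meets $\Sigma'$). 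Then a spread element $R$ meets $\Sigma'$ in at least a point, and if that point lies in $K$ we are done; the condition $\ell\setminus\{T\}\not\subset\bar B$ for lines through $T$, together with the disjointness of $\Omega$ from $\Gamma'$, is exactly what guarantees this works for the spread element $Y$ and its neighbours. I expect the bookkeeping around $Y$, $T$, and $\Omega$ to be the delicate part, and I would handle it by treating the spread element $Y$ (the one containing $\Omega$) separately from all other spread elements.
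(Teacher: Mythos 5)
Your identification of $B$ with $\B(K)$ via the isomorphism $\phi$ is correct and is exactly how the paper views the construction. But the rest of the proposal rests on a misreading of the statement: ``blocking set of $\PG(n-1,q^t)$'' means, by the convention set at the start of Section 3, a blocking set \emph{with respect to the hyperplanes}, i.e.\ the $(n-2)$-spaces. The correct instance of Lemma \ref{triviaal} is therefore $k=n-1$: you must show that $K$ blocks every $(nt-t-1)$-space of $\PG(nt-1,q)$ (in fact every $\D_{n-2}$-subspace suffices). You instead took $k=1$ and set out to prove that $K$ meets every $(t-1)$-space of $\PG(nt-1,q)$, which would make $\B(K)$ block all points, i.e.\ be the whole of $\PG(n-1,q^t)$. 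That is both not the statement and false: the cone $K$ has only $q^{t-1}|\bar B|+\frac{q^{t-1}-1}{q-1}$ points (with $\bar B$ possibly as small as a line of $\Gamma'$), far too few to meet every $(t-1)$-space. All the difficulties you then wrestle with --- $(t-1)$-spaces sticking out of $\Sigma'$, the special handling of $Y$, $T$ and $\Omega$, the incoherent step ``every point of $\tau'$ lies in $\bar B$'' --- are artifacts of chasing this wrong and unattainable target.

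With the correct target the argument is short, and it is the one the paper gives. Here $\bar B$ is a blocking set of $\Gamma'=\PG(nt-2t+1,q)$ with respect to its hyperplanes, which are $(nt-2t)$-spaces, and $\Omega$ is $(t-2)$-dimensional; Lemma \ref{kegel} (with $s=t-2$ and $k=nt-2t$) gives that $K$ is a blocking set with respect to $(nt-2t)$-spaces of $\langle\Omega,\Gamma'\rangle$, which has dimension $(t-2)+(nt-2t+1)+1=nt-t$ and hence equals $\Sigma'$. By Grassmann, every $(nt-t-1)$-space of $\PG(nt-1,q)$ meets $\Sigma'$ in a subspace of dimension at least $(nt-t-1)+(nt-t)-(nt-1)=nt-2t$, so it is blocked by $K$. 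Thus $K$ blocks all $(nt-t-1)$-spaces, and Lemma \ref{triviaal} with $k=n-1$ yields that $\B(K)=B$ blocks all hyperplanes of $\PG(n-1,q^t)$. Note that the hypotheses on $T$, $Q$ and the lines $\ell$ through $T$ play no role here; they are only needed for the minimality statement (Theorem \ref{hoofdLeo}), not for the blocking property.
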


Let $B$ be the blocking set obtained by the MPS-construction, then, by using the correspondence between the Barlotti-Cofman-representation and the representation using field reduction (see Subsection \ref{barlotti}), we see that $B$ corresponds to $\B(K)$ by using the above definitions. We also see that the set $K$ is a blocking set with respect to $(nt-t-1)$-spaces in $\PG(nt-1,q)$: by Lemma \ref{kegel}, $K$ is a blocking set w.r.t. $(nt-2t)$-spaces in $\langle \Omega,\Gamma'\rangle$. Clearly every $(nt-t-1)$-space of $\PG(nt-1,q)$ meets $\langle \Omega,\Gamma'\rangle$ in a space of dimension at least $nt-2t$.

The authors distinguish two different cases of their construction: the case $T=Q$ (which they call Construction A) and the case $T\neq Q$ (Construction B). 

The authors show for both cases:
\begin{theorem}{\rm \cite[Proposition 2]{Leo}} \label{hoofdLeo}$B$ is a minimal blocking set of $\PG(n-1,q^t)$ if and only if $\bar{B}$ is a minimal blocking set of $\Gamma'$.
 \end{theorem}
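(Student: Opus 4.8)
The plan is to isolate the ``field reduction'' part of the statement from the ``cone'' part. Recall from the discussion before the statement that $B$ is identified with $\B(K)$, where $K$ is the cone with vertex the $(t-2)$-space $\Omega\subseteq Y$ and base $\bar B\subseteq\Gamma'$; note also that $\langle\Omega,\Gamma'\rangle=\Sigma'$, since the left-hand side is contained in $\Sigma'$ and, as $\Omega\cap\Gamma'=\emptyset$, has dimension $(t-2)+(nt-2t+1)+1=nt-t=\dim\Sigma'$. Applying Lemma~\ref{kegel} inside $\Sigma'=\PG(nt-t,q)$ with vertex $\Omega$ (so $s=t-2$), base space $\Gamma'$ (so $n-s-1=nt-2t+1$) and $k=nt-2t$, one obtains that $\bar B$ is a minimal blocking set with respect to $(nt-2t)$-spaces of $\Gamma'$ if and only if $K$ is a minimal blocking set with respect to $(nt-2t)$-spaces of $\Sigma'$. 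Hence it remains to prove that $K$ is minimal with respect to $(nt-2t)$-spaces of $\Sigma'$ if and only if $\B(K)$ is a minimal blocking set of $\PG(n-1,q^t)$.

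For this I would use the description of essential points on both sides. A hyperplane $H'$ of $\PG(n-1,q^t)$ corresponds to a $\D_{n-2}$-subspace $\Pi$ of $\PG(nt-1,q)$, and because every element of $\D$ is either contained in $\Pi$ or disjoint from it, $\B(K)\cap H'=\B(K\cap\Pi)$; hence $R\in\B(K)$ is essential for $\B(K)$ if and only if there is a $\D_{n-2}$-subspace $\Pi$ with $R\subseteq\Pi$ and $K\cap\Pi\subseteq R$. On the other side, $K$ is minimal with respect to $(nt-2t)$-spaces of $\Sigma'$ precisely when every point $x$ of $K$ lies on an $(nt-2t)$-space $\tau$ of $\Sigma'$ with $K\cap\tau=\{x\}$. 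The two are connected as follows. If $R\notin\S$, then $R$ is disjoint from $\Sigma$ (distinct spread elements are disjoint and $\Sigma$ is a union of elements of $\S$), so $R$ meets $\Sigma'$ in a single point $x$ and $R\cap K=\{x\}$ with $x\in\Sigma'\setminus\Sigma$; a tangent $(nt-2t)$-space $\tau$ to $K$ at $x$ in $\Sigma'$ then satisfies $R\cap\tau=\{x\}$, so $\langle\tau,R\rangle$ is an $(nt-t-1)$-space containing $R$ and meeting $\Sigma'$ exactly in $\tau$, whence $K\cap\langle\tau,R\rangle=K\cap\tau=\{x\}\subseteq R$ --- provided $\tau$ can be chosen so that $\langle\tau,R\rangle$ is itself a $\D_{n-2}$-subspace. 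If $R\in\S$, then $R$ is a spread element of $\Sigma$, and one has to handle the cone over $Q=\bar B\cap\Gamma$ and, in particular, the distinguished element $Y$ (for which, in Construction~A, $Y=\langle Q,\Omega\rangle\subseteq K$), reusing the slicing argument from the proof of Lemma~\ref{kegel} to produce the required tangent subspaces. The converse implication is the same bookkeeping read backwards: a tangent hyperplane to $\B(K)$ through $R$ meets $\Sigma'$ in a tangent $(nt-2t)$-space to $K$ at the relevant point.

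The main obstacle, and the reason the construction carries its technical hypotheses on $\bar B$ --- that $\bar B\cap\Gamma$ is a single point $Q$, and that no line of $\Gamma'$ through $T$ has all its points but $T$ in $\bar B$ --- is the interface at the vertex $\Omega$ and the element $Y$. A point of $\Omega$ lies on $Y$ alone among the spread elements, so the essentiality of $Y\in\B(K)$ is governed by a single hyperplane of $\PG(n-1,q^t)$, whereas inside $\Sigma'$ one must exhibit a tangent $(nt-2t)$-space at \emph{every} point of the $(t-2)$-space $\Omega$ (and, in Construction~A, of all of $Y$). Verifying that the tangent $(nt-2t)$-spaces produced on the $\Sigma'$ side are ``$\D$-liftable'' (lie in a $\D_{n-2}$-subspace), and conversely that the $(nt-2t)$-dimensional traces of tangent hyperplanes are genuinely tangent to $K$ at the intended point, is where the two cases $T=Q$ and $T\neq Q$ must be treated separately and where those hypotheses do their work; the remainder is routine dimension counting combined with the correspondence $\phi$ of Section~\ref{barlotti}.
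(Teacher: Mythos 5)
First, a point of reference: the paper does not prove this statement itself --- Theorem~\ref{hoofdLeo} is imported verbatim from \cite[Proposition~2]{Leo}, so there is no internal proof to compare against. Judged on its own terms, your set-up is sound: the identification of $B$ with $\B(K)$, the check that $\langle\Omega,\Gamma'\rangle=\Sigma'$, the reduction of ``$\bar{B}$ minimal in $\Gamma'$'' to ``$K$ minimal with respect to $(nt-2t)$-spaces of $\Sigma'$'' via Lemma~\ref{kegel}, the observation that $\B(K)\cap H'=\B(K\cap\Pi)$ for the $\D_{n-2}$-space $\Pi$ corresponding to a hyperplane $H'$, and the dimension counts in the case $R\notin\S$ are all correct.

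The genuine gap is that the argument stops exactly where the proposition begins. Your bridge --- ``$K$ minimal with respect to all $(nt-2t)$-spaces of $\Sigma'$ if and only if every element of $\B(K)$ lies on a tangent $\D_{n-2}$-space'' --- rests on two provisos you never discharge, and neither is routine. (i) A tangent $(nt-2t)$-space $\tau$ at $x$ must be chosen so that $\langle\tau,R\rangle$ is (or extends to) a $\D_{n-2}$-space: the $\D_{n-2}$-spaces through $R$ number only $(q^{t(n-1)}-1)/(q^t-1)$ among all $(nt-t-1)$-spaces through $R$, so a generic tangent $\tau$ does not lift, and producing one that does is the actual content of \cite[Proposition~2]{Leo}; this is also precisely why Costa \cite{costa} must replace ``all hyperplanes of $\Gamma'$'' by the special family of traces $\langle S,\Omega\rangle\cap\Gamma'$. (ii) For $R=Y$ the correspondence breaks outright: $Y\cap K$ contains the whole $(t-2)$-space $\Omega$, the trace on $\Sigma'$ of any $\D_{n-2}$-space through $Y$ contains $Y$ itself, so it is never a tangent $(nt-2t)$-space at a single point of $\Omega$, and the essentiality of $Y$ in $B$ must instead be extracted from the hypotheses $\bar{B}\cap\Gamma=\{Q\}$ and $\ell\setminus\{T\}\not\subseteq\bar{B}$ --- hypotheses your sketch names but never uses. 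That these steps cannot be absorbed into ``routine dimension counting'' is confirmed by the paper's own remark following Theorem~\ref{hoofd1}: for the closely analogous Construction~\ref{con1} the implication ``$B$ minimal $\Rightarrow$ base minimal'' fails in general, so the equivalence you want is a genuine feature of the MPS hypotheses and not of the cone-plus-field-reduction formalism. As it stands the proposal is a correct reduction plus a statement of the problem, not a proof.
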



\begin{remarkN} The construction of Costa \cite{costa} generalises Construction A as follows. 

The notations $\S$, $\Sigma$, $\Sigma'$ and $Y$ are used as before. Let $\Omega$ be an $s$-dimensional subspace of $Y$, with $0\leq s\leq t-2$, let $\Gamma'$ be an $(nt-t-s-1)$-dimensional subspace of $\Sigma$ skew from $\Omega$, let $\Gamma$ be the $(nt-t-s-2)$-dimensional intersection of $\Gamma'$ and $\Sigma$, let $\theta$ be the $(t-s-2)$-space $Y\cap \Gamma$. Define $R$ to be the set $\{\langle S,\Omega\rangle\cap \Gamma'$: $S$ is a hyperplane of $\Pi_{n-1}$, not containing $Y\}$. Let $\bar{B}$ be a blocking set with respect to elements of the set $R$, containing the space $\theta$. In the same way as before, $B$ is defined to be the set of points of the cone with vertex $\Omega$  and base $\bar{B}$, contained in $\Sigma'\setminus \Sigma$,  together with the point $Y$. 

Costa shows that $B$ is a minimal blocking set in $\Pi_{n-1}=\PG(n-1,q^t)$ if and only if $\bar{B}$ is a minimal blocking set with respect to elements of $R$.

\end{remarkN}

\begin{remarkN} The MPS-construction and the construction of Costa generalise the cone construction of Lemma \ref{kegel} in some sense. Instead of considering the cone with base a blocking set over $\F_{q^t}$, these constructions use cones over blocking sets over a subfield $\F_{q^t}$, i.e., a line through a point of the vertex and the base is no longer a full line, but a subline. The same idea was used for planar blocking sets before in \cite{pol} and \cite{large}.
\end{remarkN}

\section{Constructing minimal blocking sets with respect to $(k-1)$-spaces in $\PG(n-1,q^t)$} \label{S4}
\subsection{A general construction method}
\begin{construction} \label{con1} Let $\Omega$ be an $(nt-kt-2)$-dimensional subspace of $\PG(nt-1,q)$, let $\bar{B}$ be a blocking set, contained in a plane $\Gamma$ which is skew from $\Omega$ and let $K$ be the cone in $\Pi=\langle \Omega,\Gamma\rangle$ with vertex $\Omega$ and base $\bar{B}$. Let $B=\B(K)$.
\end{construction}

\begin{lemma} The set $B$ of Construction \ref{con1} is a blocking set with respect to $(k-1)$-spaces in $\PG(n-1,q^t)$.
\end{lemma}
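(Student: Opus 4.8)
The plan is to reduce the statement to the case of lines in a plane by means of Lemma~\ref{kegel}, and then to pass from $\PG(nt-1,q)$ to $\PG(n-1,q^t)$ by means of Lemma~\ref{triviaal}. So I would first record that, since $\Omega$ (of dimension $nt-kt-2$) and the plane $\Gamma$ are skew, the space $\Pi=\langle \Omega,\Gamma\rangle$ has dimension $(nt-kt-2)+2+1=nt-kt+1$, and that $\Gamma$ has dimension $\dim\Pi-\dim\Omega-1$, which is exactly the configuration required by Lemma~\ref{kegel} with vertex dimension $s=nt-kt-2$ and ambient space $\Pi$.

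Next, since $\bar B$ is a blocking set of the plane $\Gamma$, i.e.\ a blocking set with respect to the $1$-spaces of $\Gamma=\PG(2,q)$, and $1<2=\dim\Gamma$, Lemma~\ref{kegel} applies with ``$k$''$=1$ and shows that the cone $K$ is a blocking set with respect to the lines of $\Pi$. (We only need the blocking property here, not the minimality half of the lemma.)

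Then I would invoke Lemma~\ref{triviaal}: it suffices to show that $K$ is a blocking set with respect to $(kt-1)$-spaces of $\PG(nt-1,q)$, for then $B=\B(K)$ blocks all $(k-1)$-spaces of $\PG(n-1,q^t)$. Let $\mu$ be an arbitrary $(kt-1)$-space of $\PG(nt-1,q)$. By the Grassmann bound,
\[
\dim(\mu\cap\Pi)\ \ge\ \dim\mu+\dim\Pi-(nt-1)\ =\ (kt-1)+(nt-kt+1)-(nt-1)\ =\ 1,
\]
so $\mu\cap\Pi$ contains a line of $\Pi$. This line is met by $K$ by the previous paragraph, hence $\mu\cap K\neq\emptyset$. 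Therefore $K$ blocks all $(kt-1)$-spaces of $\PG(nt-1,q)$, and Lemma~\ref{triviaal} finishes the proof.

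There is no genuine obstacle here; the only thing that needs care is the dimension bookkeeping, namely verifying that the codimension of $\Omega$ in $\Pi$ is precisely $3$ so that $\Gamma$ plays the role of $\PG(n-s-1,q)$ in Lemma~\ref{kegel}, identifying the relevant subspace dimension there as $1$, and checking the elementary intersection estimate that forces $\mu\cap\Pi$ to contain a line. One might also briefly note that the standing hypotheses implicitly require $nt-kt-2\ge 0$ so that $\Omega$ is a genuine subspace and the construction is non-degenerate.
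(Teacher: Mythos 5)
Your proof is correct and follows essentially the same route as the paper: Lemma~\ref{kegel} to get that $K$ blocks the lines of $\Pi$, then the Grassmann dimension count to see that every $(kt-1)$-space meets $\Pi$ in at least a line, and finally Lemma~\ref{triviaal}. The only difference is that you spell out the dimension bookkeeping that the paper compresses into a single ``hence''.
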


\begin{proof} By Lemma \ref{kegel}, the cone with base $\bar{B}$ and vertex $\Omega$ is a blocking set with respect to lines in $\langle \Omega,\Gamma\rangle$, hence, with respect to $(kt-1)$-spaces in $\PG(nt-1,q)$. So the statement follows from Lemma \ref{triviaal}.
\end{proof}


\begin{lemma}\label{tangentspace} Let $\Pi$ be an $(nt-kt+1)$-dimensional subspace of $\PG(nt-1,q)$, $k\geq 2$. Let $\D$ be a Desarguesian $(t-1)$-spread in $\PG(nt-1,q)$ and let $P$ be a point of $\Omega$. Then there exists a $\D_{k-2}$-space meeting $\Omega$ only in points of $\B(P)$. Moreover, if $\dim(\B(P)\cap \Omega)\geq 1$, i.e. if the spread element through $P$ meets $\Omega$ in a subspace of dimension at least $1$, then there is a $\D_{k-1}$-space meeting $\Omega$ only in points of $\B(P)$.
\end{lemma}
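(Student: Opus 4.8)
The plan is to translate the statement into linear algebra and then build the required $\D$-subspace one spread element at a time. Identify $\PG(nt-1,q)$ with $\PG(V)$, where $V=(\F_{q^t})^n$ is regarded as an $\F_q$-vector space of dimension $nt$, so that the spread elements of $\D$ are exactly the projective images of the $1$-dimensional $\F_{q^t}$-subspaces $\langle v\rangle_{\F_{q^t}}$ of $V$, and the $\D_{m-1}$-spaces are exactly the $\PG(\tilde W)$ for $\F_{q^t}$-subspaces $\tilde W\leq V$ with $\dim_{\F_{q^t}}\tilde W=m$. Write $\Pi=\PG(U)$ with $\dim_{\F_q}U=nt-kt+2$, pick a vector $p$ with $P=\langle p\rangle_{\F_q}$, and set $L=\langle p\rangle_{\F_{q^t}}$, so that $\B(P)=\PG(L)$ and $p\in L\cap U$, whence $\dim_{\F_q}(L\cap U)\geq 1$. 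The point of this set-up is that ``a $\D$-subspace $W=\PG(\tilde W)$ meets $\Pi$ only in points of $\B(P)$'' becomes the clean condition $\tilde W\cap U\subseteq L$, which automatically gives $W\cap\Pi=\PG(\tilde W\cap U)\subseteq\PG(L)=\B(P)$.

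The first ingredient I would isolate is an elementary counting fact: if $M\leq V$ is any $\F_q$-subspace with $\dim_{\F_q}M\leq t(n-1)$, then some spread element of $\D$ is disjoint from $M$. Indeed, every spread element meeting $M$ meets it in an $\F_q$-subspace of dimension at least $1$, and hence accounts for at least $q-1$ of the $q^{\dim_{\F_q}M}-1$ nonzero vectors of $M$, so at most $(q^{t(n-1)}-1)/(q-1)$ spread elements meet $M$; this is strictly less than $|\D|=(q^{nt}-1)/(q^t-1)$, so a spread element avoiding $M$ is left over.

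The construction proper is an induction producing $\F_{q^t}$-subspaces $L=\tilde W_1\subsetneq\tilde W_2\subsetneq\cdots$ with $\dim_{\F_{q^t}}\tilde W_j=j$ and with the invariant $\tilde W_j\cap U=L\cap U$ maintained at every stage. Assuming the invariant for $\tilde W_j$, one has $\dim_{\F_q}(U+\tilde W_j)=(nt-kt+2)+jt-\dim_{\F_q}(L\cap U)$, which is at most $t(n-1)$ whenever $j\leq k-2$ (using only $\dim_{\F_q}(L\cap U)\geq 1$), and is still at most $t(n-1)$ for $j=k-1$ provided $\dim_{\F_q}(L\cap U)\geq 2$. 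By the counting fact there is then a spread element $\langle w\rangle_{\F_{q^t}}$ with $\langle w\rangle_{\F_{q^t}}\cap(U+\tilde W_j)=\{0\}$; put $\tilde W_{j+1}=\tilde W_j+\langle w\rangle_{\F_{q^t}}$. Since $w\notin\tilde W_j$ this has $\F_{q^t}$-dimension $j+1$, and if $x=u+\lambda w\in\tilde W_{j+1}\cap U$ with $u\in\tilde W_j$, then $\lambda w=x-u\in\langle w\rangle_{\F_{q^t}}\cap(U+\tilde W_j)=\{0\}$, forcing $x=u\in\tilde W_j\cap U=L\cap U$; the reverse inclusion is immediate, so the invariant persists. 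Running $k-2$ steps produces a $\D_{k-2}$-space $\PG(\tilde W_{k-1})$ meeting $\Pi$ only in points of $\B(P)$; under the hypothesis $\dim(\B(P)\cap\Pi)\geq 1$, i.e.\ $\dim_{\F_q}(L\cap U)\geq 2$, one further step is permitted and yields a $\D_{k-1}$-space with the same property. The case $k=2$ is degenerate: then $\D_{k-2}=\D_0$ and one simply takes $W=\B(P)$, while the ``moreover'' part is the single step starting from $\tilde W_1=L$.

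I expect the only genuine difficulty to be the dimension bookkeeping. The whole argument rests on keeping $\dim_{\F_q}(U+\tilde W_j)$ at or below $t(n-1)$, and in the ``moreover'' case the final step lands on that bound with equality; so it matters both that the invariant $\tilde W_j\cap U=L\cap U$ recovers exactly one dimension at each stage, and that the extra hypothesis $\dim_{\F_q}(\B(P)\cap\Pi)\geq 1$ is spent precisely there. Once the vector-space picture is fixed, the counting fact and the propagation of the invariant are both routine.
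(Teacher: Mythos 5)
Your proof is correct and follows essentially the same strategy as the paper's: an inductive construction that enlarges a $\D$-subspace one spread element at a time, using at each step a counting/pigeonhole argument to find a spread element avoiding the obstructing subspace. The only difference is presentational --- you recast the count in terms of $\F_{q^t}$-subspaces of the underlying vector space (finding a spread element disjoint from $U+\tilde W_j$) rather than counting $\D_{i+1}$-spaces through a fixed $\D_i$-space, and your dimension bookkeeping, including the boundary case in the ``moreover'' part, checks out.
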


\begin{proof} We will first show that there exists a $\D_s$-space containing only points of $\B(P)$ of $\Pi$, where $0 \leq s\leq k-2$. The statement holds trivially for $s=0$. Suppose that the statement holds for some $0<i\leq k-3$ and let $D$ be the obtained $\D_i$-space. The number of $\D_{i+1}$-spaces through $D$ is $\frac{(q^t)^{n-i-1}-1}{q^t-1}$. If $\B(P)$ meets $\Pi$ in a space of dimension $r$, then the number of $\D_{i+1}$-spaces through $D$ that meet $\Pi$ in a point, not belonging to $\B(P)$ is at most $\frac{q^{nt-kt-r+1}-1}{q-1}$, since different $\D_{i+1}$-spaces through $D$ that meet $\Pi$ meet in spaces that have only the points of $\B(P)$ in common. Now $\frac{q^{nt-kt-r+1}-1}{q-1}<\frac{(q^t)^{n-i-1}-1}{q^t-1}$ for all $r\geq 0$, and $i\leq k-3$, hence, at least one of the $\D_{i+1}$-spaces through $D$ meets $\Pi$ only in points of $\B(P)$. By induction we find a $\D_{k-2}$-space $T$ containing only points of $\B(P)$ of $\Pi$.

If $\B(P)$ meets $\Pi$ in a space of dimension at least one, then the above count with $r\geq 1$ and $i=k-2$ shows that there exists a $\D_{k-1}$-space through $T$ meeting $\Pi$ only in $\B(P)$.
\end{proof}

\begin{lemma}\label{HP} Let $\Omega$ be an $(nt-kt-2)$-dimensional subspace of $\PG(nt-1,q)$, let $\bar{B}$ be a minimal blocking set, contained in the plane $\Gamma$ which is skew from $\Omega$. Let $K$ be the cone in $\Pi=\langle \Omega,\Gamma\rangle$ with vertex $\Omega$ and base $\bar{B}$. Suppose that every point of $\bar{B}$ lies on at least $t$ tangent lines to $\bar{B}$ in $\Gamma$. If $P$ is a point of $K$, then $P$ lies in at least $t$ hyperplanes $H_P$ of $\Pi$ that meet $K$ only in some fixed subspace $\mu$ of $\Pi$ of codimension $2$.
\end{lemma}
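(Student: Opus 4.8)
The plan is to reduce the statement about hyperplanes of $\Pi$ to a statement about tangent lines of $\bar{B}$ in $\Gamma$, exploiting the cone structure and the dimension count of Lemma \ref{tangentspace}. First I would fix a point $P$ of $K$. There are two cases according to whether $P \in \Omega$ or $P \in K \setminus \Omega$; but both are handled by the same mechanism, so I will describe the generic one. Let $\bar P$ be the projection of $P$ from $\Omega$ onto $\Gamma$ (if $P \in \Omega$, take any point $\bar P$ of $\bar{B}$). By hypothesis $\bar P$ lies on at least $t$ tangent lines $\ell_1,\dots,\ell_t$ to $\bar{B}$ in $\Gamma$. For each such tangent line $\ell_i$, pick the point $S_i := \ell_i \cap (\text{some fixed line of }\Gamma\text{ not through }\bar P)$ — more precisely, a point $S_i$ of $\ell_i \setminus \{\bar P\}$, so that $S_i \notin \bar{B}$. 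The idea is that a hyperplane of $\Pi$ through $\langle \Omega, \mu_0\rangle$ for a suitable codimension-$2$ subspace $\mu_0$, and avoiding the cone over $\ell_i \setminus \{\bar P\}$, will meet $K$ in the prescribed codimension-$2$ subspace.

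Next I would build the "core" codimension-$2$ subspace $\mu$. Since $\bar{B}$ is a \emph{minimal} blocking set in the plane $\Gamma$, every point of $\bar{B}$ is essential, so in particular $\bar P$ lies on a tangent line; but we need more, namely a codimension-$2$ subspace $\mu$ of $\Pi$ contained in $K$ and containing $\langle \Omega, \bar P\rangle$ (which is a subspace of $K$ of dimension $nt-kt-1$, i.e.\ codimension $2$ in the $(nt-kt+1)$-space $\Pi$). In fact $\langle \Omega, \bar P\rangle$ itself has codimension $2$ in $\Pi$ and lies entirely in $K$. So set $\mu := \langle \Omega, \bar P\rangle$. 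Every hyperplane of $\Pi$ through $\mu$ has codimension $1$, contains $\mu$, and its intersection with $K$ certainly contains $\mu$; the task is to produce at least $t$ such hyperplanes whose intersection with $K$ is \emph{exactly} $\mu$. A hyperplane $H \supset \mu$ of $\Pi$ is determined by one further point, and $H \cap K = \mu$ exactly when $H$ contains no point of $K \setminus \mu$, equivalently when the image of $H$ under projection from $\Omega$ to $\Gamma$ is a line $m_H$ of $\Gamma$ with $m_H \cap \bar{B} = \{\bar P\}$ — i.e.\ $m_H$ is a tangent line to $\bar{B}$ at $\bar P$. Conversely each tangent line $\ell_i$ at $\bar P$ gives the hyperplane $H_i := \langle \Omega, \ell_i\rangle$ of $\Pi$, and $H_i \cap K = \langle \Omega, \ell_i \cap \bar{B}\rangle = \langle \Omega, \bar P\rangle = \mu$. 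Since the $\ell_i$ are distinct lines of $\Gamma$ through $\bar P$, the $H_i = \langle \Omega, \ell_i\rangle$ are distinct hyperplanes of $\Pi$, all through $P$ (as $P \in \langle \Omega, \bar P\rangle \subseteq \langle\Omega,\ell_i\rangle$) and all meeting $K$ in the fixed subspace $\mu$. This gives the required $t$ hyperplanes $H_P$.

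The main obstacle I anticipate is the bookkeeping when $P \in \Omega$: then $P$ projects to \emph{every} point of $\Gamma$, so "$\bar P$" is not canonically defined and one must choose it, and then check that the resulting $\mu = \langle \Omega, \bar P\rangle$ still contains $P$ (it does, trivially) and that the count of $t$ tangent hyperplanes is uniform in this choice — which it is, since the hypothesis "every point of $\bar{B}$ lies on at least $t$ tangent lines" is uniform. A secondary point to verify carefully is that $\langle \Omega, \ell_i\rangle$ really is a hyperplane of $\Pi$ and not all of $\Pi$: this holds because $\Omega$ has dimension $nt-kt-2$, $\ell_i$ is a line skew from $\Omega$ (it lies in $\Gamma$, which is skew from $\Omega$), so $\langle \Omega, \ell_i\rangle$ has dimension $(nt-kt-2)+1+1 = nt-kt$, which is exactly one less than $\dim \Pi = nt-kt+1$. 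I do not expect to need Lemma \ref{tangentspace} for this particular lemma — that lemma is presumably used later to control how these hyperplanes of $\Pi$ extend to hyperplanes (or $(kt-1)$-spaces) of the ambient $\PG(nt-1,q)$ — so the proof here is essentially the cone-projection argument of Lemma \ref{kegel} applied one dimension at a time, combined with the tangent-line hypothesis.
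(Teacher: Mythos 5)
Your proposal is correct and follows essentially the same route as the paper: project $P$ from $\Omega$ to a point $\bar P$ of $\bar B$ (choosing $\bar P$ arbitrarily when $P\in\Omega$), take the $t$ tangent lines $\ell_i$ to $\bar B$ at $\bar P$, and observe that the hyperplanes $\langle \Omega,\ell_i\rangle$ of $\Pi$ all contain $P$ and meet $K$ exactly in the codimension-$2$ space $\mu=\langle\Omega,\bar P\rangle$. The exploratory detour in your first paragraph (the points $S_i$, the subspace $\mu_0$) is superfluous, but the argument you actually carry out in the second paragraph is the paper's proof.
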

\begin{proof} Let $P$ be a point of $K$, not in $\Omega$, and let $P'$ be the point $\langle \Omega,P\rangle\cap \Gamma$, which is contained in $\bar{B}$. By assumption, there are at least $t$ tangent lines $\ell_i^{P'}$, $i=1,\ldots,t,\ldots$ through $P'$ to $\bar{B}$. The space $\langle \ell_i^{P'},\Omega\rangle$ is a hyperplane of $\Pi$ which meets $K$ only in the space $\langle \Omega,P\rangle$ which has codimension $2$ in $\Pi$.

Let $P$ be a point of $\Omega$. By choosing all hyperplanes $\langle \Omega,\ell_i^{Q}\rangle$ with $Q$ arbitrary in $\bar{B}$ we satisfy the required condition.
\end{proof}

\begin{theorem} \label{hoofd1} If $\bar{B}$ is a minimal blocking set in $\Gamma$ such that every point of $\bar{B}$ lies on at least $2$ tangent lines to $\bar{B}$, then the blocking set $B=\B(K)$ obtained from Construction \ref{con1} is minimal.
\end{theorem}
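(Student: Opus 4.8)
The plan is to show that $B=\B(K)$ is minimal by exhibiting, for every point of $B$, a tangent $(k-1)$-space. A point of $B$ is a spread element $R\in\D$ with $R\cap K\neq\emptyset$, and, since the elements of $\D$ meeting a $\D$-subspace are exactly those contained in it, such an $R$ is essential as soon as there is a $\D_{k-1}$-subspace $\Lambda$ of $\PG(nt-1,q)$ with $R\subseteq\Lambda$ and $\Lambda\cap K\subseteq R$: then no spread element of $\Lambda$ other than $R$ meets $K$, so the $(k-1)$-space of $\PG(n-1,q^t)$ corresponding to $\Lambda$ is tangent to $B$ at $R$. So the whole proof reduces to producing such a $\Lambda$ for each $R\in B$; we argue pointwise because minimality of $\B(K)$ does not follow formally from that of $K$. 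We may assume $t\geq2$, as for $t=1$ we have $B=K$ and the claim is Lemma~\ref{kegel}; moreover $\Pi=\langle\Omega,\Gamma\rangle$ has dimension $nt-kt+1$, so Lemma~\ref{tangentspace} applies to $\Pi$.

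Fix $R\in B$ and $P\in R\cap K\subseteq R\cap\Pi$. \emph{Suppose first that $R$ meets $\Pi$ in a subspace of dimension at least $1$.} Then $\B(P)\cap\Pi=R\cap\Pi$ has dimension at least $1$, so the ``moreover'' assertion of Lemma~\ref{tangentspace} provides a $\D_{k-1}$-subspace $\Lambda$ meeting $\Pi$ only in points of $R$; the inductive construction in its proof begins with the $\D_0$-subspace $R$, so $R\subseteq\Lambda$, and then $\Lambda\cap K\subseteq\Lambda\cap\Pi\subseteq R$, as required. This case does not use the hypothesis on tangent lines.

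\emph{It remains to treat the case $R\cap\Pi=\{P\}$, so that $R\cap K=\{P\}$.} Here Lemma~\ref{tangentspace} only yields a $\D_{k-2}$-subspace $T$ with $R\subseteq T$ and $T\cap\Pi=\{P\}$. Any $\D_{k-1}$-subspace $\Lambda\supseteq T$ satisfies $\dim(\Lambda\cap\Pi)\geq\dim\Lambda+\dim\Pi-(nt-1)=1$; when $\Lambda\cap\Pi$ is a line (the only $\Lambda$ we keep) we have $\Lambda\cap K=(\Lambda\cap\Pi)\cap K$, so it suffices to find such a $\Lambda$ for which $\Lambda\cap\Pi$ is a tangent line to $K$ at $P$ inside $\Pi$: then $\Lambda\cap K=\{P\}\subseteq R$. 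Because every point of $\bar B$ lies on at least two tangent lines, the argument of Lemma~\ref{HP} (with $2$ replacing $t$) produces two distinct hyperplanes $H_1,H_2$ of $\Pi$ through $P$ with $H_1\cap K=H_2\cap K=\mu$ and $H_1\cap H_2=\mu$, where $\mu$ is a subspace of $\Pi$ of codimension $2$ contained in $K$ and through $P$ (it equals $\langle\Omega,P'\rangle$ for $P'$ the image of $P$ under projection from $\Omega$ onto $\Gamma$ when $P\notin\Omega$, and $\langle\Omega,Q\rangle$ for a suitable $Q\in\bar B$ when $P\in\Omega$). A line through $P$ contained in $H_1$ but not in $\mu$ is a tangent line to $K$ at $P$, since it meets $K$ inside $H_1\cap K=\mu$ and meets $\mu$ only in $P$; a projection argument from $\Omega$ shows these, together with their counterparts for $H_2$, are exactly the tangent lines to $K$ at $P$, and a count gives at least $2q^{\,nt-kt-1}$ of them.

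To finish, project $\PG(nt-1,q)$ from $T$ onto a $\PG((n-1)t-1,q)$ equipped with the induced Desarguesian $(t-1)$-spread $\D'$: the lines through $P$ in $\Pi$ meeting $T$ only in $P$ correspond bijectively to the points of the image $\overline\Pi$ of $\Pi$, which has dimension $nt-kt$, the $\D_{k-1}$-subspaces $\Lambda\supseteq T$ correspond to the elements of $\D'$, and for such a $\Lambda$ the set $\Lambda\cap\Pi$ is a line exactly when the corresponding element of $\D'$ meets $\overline\Pi$ in a single point. So we need an element of $\D'$ meeting $\overline\Pi$ in exactly one point, that point being the image of a tangent line to $K$ at $P$; equivalently, a weight-one point of $\overline\Pi$ (with respect to $\D'$) among the images of the tangent lines, which fill $\overline{H_1}\setminus\overline\mu$ and $\overline{H_2}\setminus\overline\mu$, two disjoint sets of $q^{\,nt-kt-1}$ points each. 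Bounding the number of ``heavy'' points of $\overline\Pi$ — those on an element of $\D'$ meeting $\overline\Pi$ in dimension $\geq1$ — against this lower bound $2q^{\,nt-kt-1}$ produces the desired weight-one point, and extending the element of $\D'$ through it to a $\D'_{k-2}$-subspace without enlarging the intersection with $\overline\Pi$ (a count of the kind in the proof of Lemma~\ref{tangentspace}, here easier because $\dim\overline\Pi$ is smaller by one) gives $\Lambda$. This last step is the main obstacle: one must control the heavy points of $\overline\Pi$ — in number, or in position relative to $\overline{H_1}$ and $\overline{H_2}$ — well enough to be sure that at least one image of a tangent line has weight one. This is exactly where the strengthened hypothesis of \emph{at least two} tangent lines per point is used; with a single tangent line one would have only $q^{\,nt-kt-1}$ candidates, already one too few in the smallest cases. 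Everything else is dimension bookkeeping combined with Lemmas~\ref{kegel}, \ref{triviaal}, \ref{tangentspace} and \ref{HP}.
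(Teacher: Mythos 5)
Your overall strategy coincides with the paper's: the same two-case split according to whether the spread element through $P$ meets $\Pi$ in dimension $\geq 1$ or in the single point $P$, the same use of Lemma~\ref{tangentspace} to produce the $\D_{k-2}$-space $T$ with $T\cap\Pi=\{P\}$, the same use of Lemma~\ref{HP} to get two hyperplanes $H_1,H_2$ of $\Pi$ meeting $K$ only in a codimension-$2$ space $\mu$, and the same passage to the quotient by $T$ with the induced spread $\D'$, reducing everything to finding an element of $\D'$ that meets $\Pi/T$ in exactly one point of $(H_1/T\cup H_2/T)\setminus(\mu/T)$. However, you stop exactly at the decisive step: you write that ``bounding the number of heavy points \dots against this lower bound $2q^{nt-kt-1}$ produces the desired weight-one point'' and then declare this bound to be ``the main obstacle'' without establishing it. That bound \emph{is} the proof; everything before it is setup. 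The paper closes it with a double count that you should supply: since $\dim(\Pi/T)+(t-1)$ equals the dimension $nt-kt+t-1$ of the quotient space, \emph{every} element of $\D'$ meets $\Pi/T$, so if $A$ denotes the number of elements meeting it in exactly one point, then $A+\bigl(\frac{q^{nt-kt+t}-1}{q^t-1}-A\bigr)(q+1)\leq\frac{q^{nt-kt+1}-1}{q-1}$, which yields $A\geq\frac{q+1}{q}\cdot\frac{q^{nt-kt+t}-1}{q^t-1}-\frac{q^{nt-kt+1}-1}{q(q-1)}$ and hence that the number of points of $\Pi/T$ lying on a heavy element, namely $\frac{q^{nt-kt+1}-1}{q-1}-A$, is less than $2q^{nt-kt-1}$. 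Without some such estimate the argument is incomplete, so as written your proposal has a genuine gap.

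Two smaller points. First, the quotient $\PG(nt-1,q)/T$ has dimension $nt-kt+t-1$, not $(n-1)t-1$ (these agree only when $k=2$). Second, the final step of ``extending the element of $\D'$ through the weight-one point to a $\D'_{k-2}$-subspace'' is spurious: the elements of $\D'$ are already in bijection with the $\D_{k-1}$-subspaces through $T$, so the element through the chosen weight-one point \emph{is} the desired $\Lambda$ and nothing further needs to be built. Also, your parenthetical claim that the lines through $P$ in $H_1\cup H_2$ but not in $\mu$ are \emph{exactly} the tangent lines to $K$ at $P$ is not needed and not true in general; only the lower bound $2q^{nt-kt-1}$ on their number matters.
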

\begin{proof}  Let $P$ be a point of $K$. We need to show that there is a $\D_{k-1}$-space through $P$ containing only points of $\B(P)$ of $K$. 
%
%
If $\B(P)$ meets $\Pi$ in a space of dimension at least one, then by Lemma \ref{tangentspace}, there exists a $\D_{k-1}$-space meeting $\Pi$ and hence $K$ only in $\B(P)$, which implies that there is a tangent $(k-1)$-space through $\B(P)$ to $B$ in $\PG(n-1,q^t)$.

So from now on, we suppose that $\B(P)$ meets $\Pi$ only in the point $P$. By Lemma \ref{tangentspace}, we have that there exists a $\D_{k-2}$-space $T$ meeting $\Pi$ only in the point $P$. By Lemma \ref{HP}, there are (at least) two hyperplanes, say $H_1$ and $H_2$, of $\Pi$ through $P$ which meet $K$ only in a subspace $\mu$ of $\Pi$ of codimension $2$.

Consider the quotient $\Pi/T$ in the quotient space $\PG(nt-1,q)/T\cong \PG(nt-kt+t-1,q)$. Note that, as $T$ is spanned by spread elements of the Desarguesian spread $\D$, $\D$ induces a Desarguesian $(t-1)$-spread $\D'$ in $\PG(nt-1,q)/T$. The $\D_{k-1}$-spaces through $T$ are in one-to-one correspondence with the elements of $\D'$. Since a tangent $\D_{k-1}$-space through $P$ to $K$ corresponds to an element of $\D'$ which meets $\Pi/T$ in a subspace skew to $K/T$, we need to show that there exists an element of $\D'$ meeting $\Pi/T$ in a subspace skew to $K/T$. Note that $\Pi/T$ is $(nt-kt)$-dimensional and that $H_1/T$ and $H_2/T$ are hyperplanes of $\Pi/T$ through $\mu/T$. 

Let $A$ be the number of elements of $\D'$ meeting $\Pi/T$ in a point, then expressing that $A+(\frac{q^{nt-kt+t}-1}{q^t-1}-A)(q+1)$ is at most $\frac{q^{nt-kt+1}-1}{q-1}$, the number of points in $\Pi/T$, yields that $A$ is at least $\frac{q+1}{q}(\frac{q^{nt-kt+t}-1}{q^t-1})-\frac{q^{nt-kt+1}-1}{q(q-1)}$. This implies that there are at most $\frac{q^{nt-kt+1}-1}{q-1}-A<2q^{nt-kt-1}$ points of $\Pi/T$ that are not the exact intersection of an element of $\D'$ with $\Pi/T$. This implies that there is at least one point of $H_1/T$ or $H_2/T$ that induces a tangent $\D_{k-1}$-space through $T$ which in turn implies that we have found a tangent $(k-1)$-space to $B$ in the point $\B(P)$ of $\PG(n-1,q^t)$ and that $P$ is essential.
%
%
%
\end{proof}

\begin{remarkN} By \cite{Br,Jamison}, an affine blocking set contains at least $2q-1$ points. This implies that every point of a minimal blocking set $\bar{B}$ in $\PG(2,q)$ with $|\bar{B}|=q+k, k\leq q$, lies on at least $q-k+1$ tangent lines to $\bar{B}$. So every minimal blocking set of size at most $2q-1$ satisfies the condition of Construction \ref{con1}. In particular, if $\bar{B}$ is a line, we find that, as announced before, a linear blocking set is minimal.
\end{remarkN}


Using that the blocking set constructed by Construction \ref{con1} is contained in $\B(\Pi)$, where $\Pi$ is an $(nt-kt+1)$-dimensional subspace of $\PG(nt-1,q)$, the following corollary easily follows.
\begin{corollary} There exists a minimal $(k-1)$-blocking set in $\PG(n-1,q^t)$ constructed by Construction \ref{con1} which spans an $s$-dimensional space for all $n-k\leq s\leq \min\{n-1,nt-kt+1\}$.
\end{corollary}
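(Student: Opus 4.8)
The plan is to combine Theorem \ref{hoofd1} with a suitable choice of the plane $\Gamma$ and the vertex $\Omega$ inside $\PG(nt-1,q)$, controlling the span of the resulting blocking set via the ambient $\D$-subspace structure. First I would recall that by Construction \ref{con1} and Theorem \ref{hoofd1}, if $\bar{B}$ is taken to be a line of $\Gamma$ (which is a minimal blocking set with respect to points, every point of which lies on $q+1\geq 2$ tangent lines), then $B=\B(K)$ is a minimal $(k-1)$-blocking set, where $K$ is the cone with $(nt-kt-2)$-dimensional vertex $\Omega$ over the line $\bar{B}$; this cone $K$ is an $(nt-kt)$-dimensional subspace of $\PG(nt-1,q)$ contained in the $(nt-kt+1)$-dimensional space $\Pi=\langle\Omega,\Gamma\rangle$. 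Since $B\subseteq \B(\Pi)$, the span of $B$ in $\PG(n-1,q^t)$ is contained in the field-reduced image of the smallest $\D$-subspace containing $\Pi$. The idea is: by varying how $\Pi$ sits relative to $\D$ — anywhere from being itself a $\D$-subspace to being in general position — we can make the smallest $\D$-subspace containing $\Pi$ have any dimension $rt-1$ with $nt-kt+1\leq rt-1 \leq nt-1$, i.e. $r$ ranging over $n-k+1\leq r\leq n$ when $(nt-kt+1)\leq rt-1$; one checks that $\langle B\rangle$ then actually equals $\B$ of this $\D_{r-1}$-space, giving span dimension $r-1$.

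Next I would make the two extreme cases explicit and then interpolate. For the lower bound $s=n-k$: choose $\Pi$ to be a subspace of a $\D_{n-k}$-subspace $\Lambda$ (of dimension $(n-k+1)t-1 \geq nt-kt+1$ since $t\geq 1$), with $\Omega$ a $\D$-subspace if possible; then $K\subseteq\Lambda$, so $B=\B(K)\subseteq\B(\Lambda)$, an $(n-k)$-dimensional space; and since $B$ is a blocking set with respect to $(k-1)$-spaces it cannot be contained in an $(n-k-1)$-space, so $\langle B\rangle$ is exactly $(n-k)$-dimensional. For the upper bound $s=\min\{n-1,nt-kt+1\}$: when $nt-kt+1\leq n-1$, i.e. essentially $t=1$ or small $k$, take $\Pi$ in general position so that no proper $\D$-subspace contains it other than ones of full relevant dimension; more robustly, choose $\Omega$ and $\Gamma$ so that $\Pi$ meets each spread element of $\D$ in at most a point except on a controlled set, forcing $\B(\Pi)$ to span a space of the maximum allowed dimension. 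For the general intermediate value $s$, I would pick $\Pi$ so that the smallest $\D$-subspace containing it has dimension exactly $(s+1)t-1$ (when $s\leq n-1$ and $(s+1)t-1\geq nt-kt+1$), then take $B\subseteq \B$ of that $\D_s$-space and argue $\langle B\rangle$ fills it.

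The main obstacle I expect is the dimension-counting needed to verify that for each target $s$ in the stated range there genuinely exists a plane $\Gamma$ and a vertex $\Omega$ (disjoint from $\Gamma$, of the prescribed dimension $nt-kt-2$) such that the smallest $\D$-subspace containing $\Pi=\langle\Omega,\Gamma\rangle$ has exactly the dimension $(s+1)t-1$, and simultaneously that the resulting $B=\B(K)$ actually spans all of that $\D_s$-image rather than a smaller subspace. The first part is a matter of choosing $\Pi$ to intersect $\D$ appropriately — one needs $(s+1)t-1 \geq \dim\Pi = nt-kt+1$, which is exactly the condition $s\geq n-k-1+1/t$, i.e. $s\geq n-k$, matching the lower bound, and $(s+1)t-1\leq nt-1$, i.e. $s\leq n-1$, matching one half of the upper bound; the other half, $s\leq nt-kt+1$, comes from the fact that $B\subseteq\B(\Pi)$ and $\B(\Pi)$ spans at most an $(nt-kt+1)$-dimensional space (since $\Pi$ has that dimension and $\B$ is non-expansive on spans). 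For the second part — that $\langle B\rangle$ is exactly $(s)$-dimensional — I would use minimality of $B$ together with the blocking property: $B$ blocks all $(k-1)$-spaces of its span, so if $\langle B\rangle$ had dimension $s'<s$ we would still have a minimal $(k-1)$-blocking set in dimension $s'$, but a careful choice of the cone $K$ inside the chosen $\D_s$-image (e.g. not contained in any proper $\D$-subspace of it) rules this out. This is the routine-but-delicate part, and I would organise it as a single lemma on the span of $\B(K)$ in terms of the $\D$-closure of $\Pi$ before deducing the corollary.
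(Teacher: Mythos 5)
Your overall strategy---varying the position of $\Pi=\langle\Omega,\Gamma\rangle$ with respect to the Desarguesian spread $\D$ and reading off the span of $B$ from the smallest $\D$-subspace involved---is exactly what the paper intends; the paper offers no proof beyond the remark that $B\subseteq\B(\Pi)$ with $\dim\Pi=nt-kt+1$, so your write-up is essentially a fleshing-out of the intended argument. The lower end of the range and the interpolation are fine: placing $\Pi$ inside a $\D_s$-subspace and choosing it generically there, and invoking the blocking property to force $\langle B\rangle\geq n-k$, is correct.

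There is, however, one concrete error. You fix $\bar{B}$ to be a line, so that $K=\langle\Omega,\bar{B}\rangle$ is an $(nt-kt)$-dimensional \emph{subspace}, a hyperplane of $\Pi$. The span of $B=\B(K)$ is governed by the smallest $\D$-subspace containing $K$, not the one containing $\Pi$: since a $\D$-subspace is partitioned by spread elements, $\langle\B(K)\rangle$ corresponds precisely to the smallest $\D$-subspace containing $\langle K\rangle$, and for a $d$-dimensional subspace this has dimension at most $(d+1)t-1$, i.e.\ $\langle B\rangle$ has dimension at most $\dim\langle K\rangle=nt-kt$. So your claim that ``$\langle B\rangle$ then actually equals $\B$ of this $\D_{r-1}$-space'' fails exactly for the top value $r-1=nt-kt+1$, which belongs to the stated range whenever $nt-kt+1\leq n-1$ (e.g.\ $n=4$, $k=3$, $t=2$), and no repositioning of $\Pi$ repairs this while $\bar{B}$ remains a line. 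The fix is to use, for $s=nt-kt+1$, a base $\bar{B}$ that spans the plane $\Gamma$ and still satisfies the two-tangent hypothesis of Theorem~\ref{hoofd1} (by the remark following that theorem, any non-linear minimal blocking set of size at most $2q-1$ will do); then $\langle K\rangle=\Pi$ and a generic choice of $\Pi$ yields span $nt-kt+1$. A minor further point: your inequality ``$s\geq n-k-1+1/t$'' should read $s\geq n-k-1+2/t$, which still gives $s\geq n-k$ for $t\geq 2$ but shows the argument degenerates at $t=1$.
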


\begin{remarkN}If we compare Theorem \ref{hoofd1} to Theorem \ref{hoofdLeo} of Mazzocca, Polverino and Storme, we see that in the latter theorem the vice versa part also holds: if $B$ is minimal, than $\bar{B}$ is necessarily minimal. In general this does not hold for our construction. \end{remarkN}

\subsection{Construction \ref{con1} in a scattered subspace}
 In general, since the position of the space $\Pi=\langle \Omega,\Gamma\rangle$ with respect to the Desarguesian spread $\D$ is arbitrary, we cannot derive the size of the blocking set $B$ from the size of the blocking set $\bar{B}$. But if we take the space $\Pi$ to be e.g. a {\em scattered} subspace with respect to $\D$ (i.e. every element of $\D$ that meets this subspace, meets it in exactly $1$ point), we are able to derive the size of $B$ in terms of $|\bar{B}|$. Note that this is a restriction: it is clear that not all examples arising from Construction \ref{con1} can be obtained from a scattered subspace $\Pi$. In some cases, it is even impossible to find a scattered subspace of the right dimension, in view of the following theorem:

\begin{theorem}\label{max}\cite[Theorem 4.3]{BlLa2000}
A scattered $\F_q$-linear set in $\PG(n-1,q^t)$ has rank $\leq nt/2$.
\end{theorem}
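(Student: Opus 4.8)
The statement to prove is Theorem~\ref{max}: a scattered $\F_q$-linear set in $\PG(n-1,q^t)$ has rank at most $nt/2$. The plan is to translate the scattered condition into the field-reduced picture and run a double-counting argument on incident (spread element, point of the subspace) pairs. Let $S = \B(\mu)$ be the linear set, where $\mu$ is an $(r-1)$-dimensional subspace of $\PG(nt-1,q)$, so the rank is $r$; ``scattered'' means every element $R \in \D$ with $R \cap \mu \neq \emptyset$ meets $\mu$ in a single point, and consequently $|S| = |\B(\mu)| = (q^r-1)/(q-1)$, i.e.\ $S$ has exactly as many points as $\mu$ has.

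First I would set up the counting over the whole ambient space. Pick any point $P \in \mu$; the spread element $R_P \in \D$ through $P$ has dimension $t-1$ and, by the scattered hypothesis, meets $\mu$ only in $P$. Therefore $\langle R_P, \mu\rangle$ has dimension $(t-1) + (r-1) + 1 = t + r - 1$. For this span to fit inside $\PG(nt-1,q)$ we need $t + r - 1 \leq nt - 1$, which only gives $r \leq nt - t$; that is the trivial bound and not yet what we want. The sharper bound comes from looking at \emph{all} the spread elements meeting $\mu$ simultaneously. The key observation is that the union $\bigcup_{P \in \mu} R_P$ of the spread elements through the points of $\mu$ is a cone-like configuration: distinct spread elements are disjoint, each contributes $q^{t-1}$ points off $\mu$ plus its one point on $\mu$, so this union has exactly $|S|\cdot q^{t-1} + |\mu| = \frac{q^r-1}{q-1}q^{t-1} + \frac{q^r-1}{q-1} = \frac{(q^r-1)(q^{t-1}+1)}{q-1}$ points... but more usefully one shows this union is contained in a subspace of controlled dimension.

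The real engine, I expect, is the following: consider the subspace $W = \langle R \mid R \in \D,\ R\cap\mu\neq\emptyset\rangle$, the $\D$-span of $S$, which is a $\D$-subspace, say a $\D_{m-1}$-subspace of dimension $mt-1$ for some $m \leq n$. On one hand $\mu \subseteq W$ (since each point of $\mu$ lies in its spread element, which lies in $W$), so $r \leq mt$. On the other hand, a dimension/counting argument bounds how large $\mu$ can be inside $W$ while remaining scattered: the $m$ ``independent directions'' of the Desarguesian spread inside $W$ each can absorb at most... here one argues that $\mu$, being scattered, behaves like a set of $m$ ``coordinates'' and $r \le mt - (\text{something})$, or alternatively one reduces to $W = \PG(nt-1,q)$ (replace $n$ by $m$) and proves $r \le mt/2$ directly. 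So without loss of generality $S$ spans $\PG(n-1,q^t)$, i.e.\ $W$ is everything. Now the clean argument: count incident pairs $(R, Q)$ with $R\in\D$, $Q \in R\cap\mu$. There are $|S|$ such pairs. Each spread element not in $S$ contributes $0$; each in $S$ contributes exactly $1$ (scattered). That is consistent but gives nothing new. Instead, dualize: consider a complementary subspace. The hard part — and where I'd focus the effort — is the following dimension inequality: if $\mu$ is scattered and spans, then a \emph{generic} hyperplane of $\PG(nt-1,q)$ meeting $\mu$ in a hyperplane of $\mu$ must also contain a $\D_{n-2}$-subspace (by Lemma~\ref{hyperspan}), and the interaction $\dim(\mu \cap \Sigma_{n-2})$ forces $r - 1 \leq (nt-t-1) - (r-1)$ roughly, i.e.\ twice the codimension business. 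Concretely: for \emph{every} $\D_{n-2}$-subspace $\Sigma$ (there are $\frac{q^{nt}-1}{q^t-1}$ of them, one per hyperplane of $\PG(n-1,q^t)$), the intersection $\mu \cap \Sigma$ has dimension at least $(r-1) - t$ (since $\Sigma$ has codimension $t$), and because $\mu$ is scattered, the linear set $\B(\mu \cap \Sigma)$ inside $\Sigma \cong \PG(nt-t-1,q)$ is again scattered and sits in the hyperplane of $\PG(n-1,q^t)$ corresponding to $\Sigma$; but also $\mu \not\subseteq \Sigma$ (as $\mu$ spans), so $\dim(\mu\cap\Sigma) \le r - 2$. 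Iterating the ``intersect with a $\D$-hyperplane'' operation $n-1$ times, each step dropping the rank by at most $t$ but the relevant dimension can't go negative, yields after careful bookkeeping the bound $r \le nt/2$; the clean way is to pair $\mu$ with a scattered complement.

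Here is the cleanest route, which I would ultimately write up. Suppose for contradiction $r > nt/2$, i.e.\ $2r > nt$, equivalently $2(r-1) \ge nt$, so a subspace $\mu$ of dimension $r-1$ and a subspace $\mu'$ also of dimension $\ge r-1 - 1$... — more precisely, since $\dim\mu + \dim\mu' \ge (r-1) + (r-1) \ge nt - 1$ when $2r \ge nt + 1$, any two subspaces of dimension $r-1$ in $\PG(nt-1,q)$ must meet. Now take a projectivity $g$ of $\PG(nt-1,q)$ stabilizing $\D$ (an element of the group inducing $\PGL(n,q^t)$, i.e.\ $\GL(n,q^t)$ acting $\F_q$-semilinearly — field-reduction supplies such a group acting transitively enough on subspaces in general position) such that $\mu^g \neq \mu$; then $\mu$ and $\mu^g$ meet in a point $Q$, lying in a common spread element $R$, and $R$ meets $\mu$ in $Q$ only and $\mu^g$ in $Q^{g^{-1}}\cdots$ — hmm, this needs $R \cap \mu = \{Q\}$ and the scattered property transported. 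The contradiction one wants: a point of $S$ of ``weight'' $\ge 2$. I think the published proof (Blokhuis–Lavrauw) actually argues: if $\langle \mu \rangle$ has rank $r$, look at $\mu$ and the spread $\D$ restricted to $\langle \mu \rangle_{\D}$, count the $(t-1)$-spaces of $\D$ through points of $\mu$ versus the total number of $(t-1)$-spaces, and use $|\mu| q^{t-1} < |\PG(nt-1,q)| = \frac{q^{nt}-1}{q-1}$, i.e.\ $\frac{q^r - 1}{q-1} q^{t-1} < \frac{q^{nt}-1}{q-1}$ giving only $r + t - 1 < nt$ again — so the factor of $2$ genuinely needs the two-copies-must-meet idea above. \textbf{Main obstacle:} making the ``two translates of $\mu$ must intersect, hence scattered fails'' argument airtight — specifically, producing an element of the field-reduction stabilizer group that moves $\mu$ yet for which the forced intersection point lies on a spread element meeting $\mu$ in $>1$ point (or meeting $\mu^g$ in a point whose spread element also hits $\mu$), i.e.\ correctly converting ``$\mu \cap \mu^g \neq \emptyset$'' into a violation of scatteredness. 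I would resolve it by choosing $g$ to fix $\D$ pointwise-on-a-large-subspace but move $\mu$, so that $\mu^g$ is another scattered subspace of the \emph{same} linear set $S$ on a shifted model, forcing $|S| \ge |\mu| + (\text{overlap contributions}) > (q^r-1)/(q-1)$, contradicting scatteredness; the bookkeeping of that strict inequality is the only real computation.
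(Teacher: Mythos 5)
The paper does not prove this statement at all; it is quoted verbatim from Blokhuis and Lavrauw \cite[Theorem 4.3]{BlLa2000}, so the only thing to assess is whether your argument stands on its own. It does not: you correctly identify that the factor of $2$ must come from forcing two $r$-dimensional objects into an $nt$-dimensional space, and you even name the right strategy (move $\mu$ by an element of the stabilizer of $\D$ and convert the forced intersection into a violation of scatteredness), but you explicitly leave the crucial step unresolved — ``producing an element of the field-reduction stabilizer group\dots correctly converting $\mu\cap\mu^g\neq\emptyset$ into a violation of scatteredness'' — and the resolution you sketch (a $g$ fixing $\D$ ``pointwise-on-a-large-subspace'' plus a counting of ``overlap contributions'' against $|S|=(q^r-1)/(q-1)$) is not a proof and does not obviously close the gap. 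All the other threads in your write-up (the double count, the span bound $r\le nt-t$, the iterated intersection with $\D_{n-2}$-subspaces) are correctly diagnosed by you as giving nothing, so the entire burden rests on the one step you did not complete.

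The missing idea is simply to take $g$ to be scalar multiplication by some $\lambda\in\F_{q^t}\setminus\F_q$ on the underlying vector space $\V(nt,q)=\V(n,q^t)$. Let $W$ be the $r$-dimensional $\F_q$-subspace with $\mu=\PG(W)$. Each spread element of $\D$ is the $\F_q$-point set of a $1$-dimensional $\F_{q^t}$-subspace, hence is fixed \emph{setwise} by multiplication by $\lambda$; moreover this map has no fixed projective points over $\F_q$, since $\lambda v=cv$ with $c\in\F_q^*$ forces $\lambda\in\F_q$. Now if $2r>nt$ then $W\cap\lambda W\neq\{0\}$, so there are nonzero $v,w\in W$ with $v=\lambda w$; then $v$ and $w$ are two points of $\mu$ lying in the same spread element $\PG(\langle w\rangle_{\F_{q^t}})$, and they are distinct projective points precisely because $\lambda\notin\F_q$. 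This contradicts scatteredness, so $W\cap\lambda W=\{0\}$ and $2r\le nt$. This three-line argument is essentially the Blokhuis--Lavrauw proof; without it (or an equivalent), your proposal is a plan with its central step missing rather than a proof.
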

In \cite[Theorem 2.5.5]{thesis}, Lavrauw shows the following:
\begin{theorem}\label{sc} If $r$ is even, then there exists a scattered subspace (with respect to a Desarguesian $(t-1)$-spread) of dimension $rt/2-1$ in $\PG(rt-1,q)$. If $r$ is odd, there exists a scattered subspace (with respect to a Desarguesian $(t-1)$-spread) of dimension $(rt-t)/2-1$.\end{theorem}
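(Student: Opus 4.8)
The plan is to exhibit such a subspace explicitly, namely as the graph of a Frobenius map (a scattered subspace ``of pseudoregulus type''). Put $m=\lfloor r/2\rfloor$ and identify $\PG(rt-1,q)$ with the projective space $\PG(V)$ on $V=\F_{q^t}^r$, where $V$ is viewed as an $\F_q$-vector space of dimension $rt$. I would take for the Desarguesian $(t-1)$-spread $\D$ the set of all $1$-dimensional $\F_{q^t}$-subspaces of $V$, which is no loss of generality since any two Desarguesian $(t-1)$-spreads of $\PG(rt-1,q)$ are projectively equivalent. In this model the spread element through a point $\langle v\rangle_{\F_q}$ is $\langle v\rangle_{\F_{q^t}}$, so an $\F_q$-subspace $U$ of $V$ is scattered with respect to $\D$ exactly when, for every $u\in U\setminus\{0\}$ and every $\lambda\in\F_{q^t}$, one has $\lambda u\in U\Rightarrow\lambda\in\F_q$.

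Concretely, I would set
\[
U=\left\{\,(x_1,x_1^q,\ x_2,x_2^q,\ \ldots,\ x_m,x_m^q,\ \underbrace{0,\ldots,0}_{r-2m})\ :\ x_1,\ldots,x_m\in\F_{q^t}\,\right\}.
\]
Two verifications then remain, both routine. First, since $x\mapsto x^q$ is $\F_q$-linear, $U$ is an $\F_q$-subspace, and reading off the coordinates in positions $1,3,\ldots,2m-1$ shows the displayed parametrisation is injective; hence $\dim_{\F_q}U=mt$, so $U$ is a subspace of $\PG(rt-1,q)$ of dimension $mt-1$, which is $rt/2-1$ for $r$ even and $(rt-t)/2-1$ for $r$ odd, as required. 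Second, if $0\neq u=(x_1,x_1^q,\ldots)\in U$ and $\lambda\in\F_{q^t}$ satisfies $\lambda u\in U$, then membership of $\lambda u$ in $U$ forces its coordinate in position $2i$ to equal the $q$-th power of its coordinate in position $2i-1$, i.e.\ $\lambda x_i^q=(\lambda x_i)^q=\lambda^q x_i^q$ for $i=1,\ldots,m$; as $u\neq 0$ some $x_i$ is nonzero, whence $\lambda=\lambda^q$ and $\lambda\in\F_q$. Thus $U$ meets every element of $\D$ in at most one point, so $U$ is scattered.

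I do not foresee a genuine obstacle: once the subspace $U$ is written down, the argument reduces to the dimension count and the one-line Frobenius computation above, and the two parities of $r$ are handled uniformly by padding with $r-2m\in\{0,1\}$ zero coordinates — harmless, because $u\neq 0$ already forces some $x_i$ to be nonzero, so the trailing zeros never enter the scattered check. The only ``hard'' part is really conceptual, namely hitting on this construction in the first place. As a final remark, for $r$ even one may note, invoking Theorem~\ref{max}, that the dimension $rt/2-1$ is the maximum possible for a scattered $\F_q$-linear set, so in that case the statement is best possible.
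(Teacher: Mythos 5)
Your proof is correct. Note first that the paper does not actually prove this statement: it is quoted verbatim from Lavrauw's thesis (\cite[Theorem 2.5.5]{thesis}), so your argument is being compared against that source rather than against anything in the text. Your construction --- the graph of the Frobenius map, $U=\{(x_1,x_1^q,\ldots,x_m,x_m^q,0,\ldots,0)\}$ with $m=\lfloor r/2\rfloor$ --- is the standard ``pseudoregulus type'' scattered subspace, and the two verifications go through exactly as you say: $\F_q$-linearity and injectivity of the parametrisation give $\dim_{\F_q}U=mt$, and the identity $\lambda x_i^q=(\lambda x_i)^q$ forces $\lambda\in\F_q$, so each spread element $\langle u\rangle_{\F_{q^t}}$ meets $U$ in the single point $\langle u\rangle_{\F_q}$. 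Your reduction to the standard field-reduction model of $\D$ is also legitimate, and in fact unnecessary, since the statement only asserts existence with respect to \emph{a} Desarguesian spread. The route is genuinely different from the cited one: Blokhuis and Lavrauw obtain the bound $(rt-t)/2-1$ for \emph{arbitrary} $(t-1)$-spreads by an inductive extension argument (a counting step showing that any scattered subspace below that dimension can be enlarged), and then push to $rt/2-1$ in the Desarguesian case for $r$ even by a separate construction. Your single explicit subspace handles both parities uniformly and is shorter, at the price of working only for Desarguesian spreads --- which is all that Theorem \ref{sc} claims. Your closing remark that $rt/2-1$ is optimal for $r$ even by Theorem \ref{max} is also correct.
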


\begin{theorem} Let $k\geq (n+3)/2$. If $\bar{B}$ is a minimal blocking set in $\PG(2,q)$ such that every point of $\bar{B}$ lies on at least $2$ tangent lines to $\bar{B}$, then there exists a minimal blocking set with respect to $(k-1)$-spaces in $\PG(n-1,q^t)$ with size $|\bar{B}|q^{nt-kt-1}+\frac{q^{nt-kt-1}-1}{q-1}$.
\end{theorem}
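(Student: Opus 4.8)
The plan is to combine the general minimality result (Theorem~\ref{hoofd1}) with a careful choice of the ambient space $\Pi=\langle\Omega,\Gamma\rangle$ so that the size can be read off. First I would invoke Theorem~\ref{hoofd1}: since $\bar B$ is a minimal blocking set in $\PG(2,q)$ every point of which lies on at least $2$ tangent lines, Construction~\ref{con1} produces a \emph{minimal} blocking set $B=\B(K)$ with respect to $(k-1)$-spaces in $\PG(n-1,q^t)$, for any choice of $(nt-kt-2)$-dimensional vertex $\Omega$ and any plane $\Gamma$ skew from $\Omega$. So the only work left is to pin down $|B|$, and for that I want $\Pi=\langle\Omega,\Gamma\rangle$ — which is $(nt-kt+1)$-dimensional — to be \emph{scattered} with respect to the Desarguesian spread $\D$, because then every spread element meeting $\Pi$ meets it in exactly one point, so $|\B(K)|=|K|$.

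The next step is to check that a scattered subspace of the required dimension exists. Here the hypothesis $k\geq (n+3)/2$ enters: I would set $r=nt-kt+2=t(n-k)+2$ and note that $\Pi$ is $(r-1)$-dimensional inside $\PG(nt-1,q)$. Writing $nt-1=(n-k)t\cdot t'+\cdots$ — more simply, I want a scattered $(r-1)$-space inside the whole $\PG(nt-1,q)$, and by Theorem~\ref{sc} applied in $\PG(nt-1,q)$ (with $rt$ there replaced by $nt$) a scattered subspace of dimension $nt/2-1$ exists when $n$ is even and of dimension $(nt-t)/2-1$ when $n$ is odd; in either case of dimension at least $(nt-t)/2-1$. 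The condition $k\geq(n+3)/2$ is exactly equivalent to $2(nt-kt+1)\leq nt-t$, i.e.\ $nt-kt+1\leq (nt-t)/2$, so $r-1=nt-kt+1\leq (nt-t)/2-1$ fails by one — so I would instead observe that any subspace of a scattered subspace is scattered, and since $(nt-t)/2-1\geq nt-kt+1$ we can choose $\Pi$ inside a maximal scattered subspace guaranteed by Theorem~\ref{sc}. (I should double-check the off-by-one: $k\geq (n+3)/2 \iff 2k\geq n+3 \iff nt-t \geq 2(nt-kt)+t \cdot(\text{something})$; the clean inequality to verify is $nt-kt+1\leq (nt-t)/2-1+1=(nt-t)/2$, i.e. $2(nt-kt+1)\le nt-t$, i.e. $nt-2kt+2\le -t$, i.e. $t(2k-n-1)\ge t+2$; for $t\ge 1$ and $2k-n-1\ge 2$ this holds except possibly small cases, which I would handle directly or by slightly strengthening to the even-dimensional scattered space of Theorem~\ref{sc} when $n$ is even.)

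Once $\Pi$ is scattered, $|B|=|\B(K)|=|K|$, and $|K|$ is given by the Remark after Lemma~\ref{kegel}: the cone with $s$-dimensional vertex $\Omega$ over a base of size $|\bar B|$ has size $q^{s+1}|\bar B|+\frac{q^{s+1}-1}{q-1}$. Here $s=nt-kt-2$, so $s+1=nt-kt-1$, giving
\[
|B|=q^{nt-kt-1}|\bar B|+\frac{q^{nt-kt-1}-1}{q-1},
\]
which is exactly the claimed size. I expect the main obstacle to be purely bookkeeping: verifying that the dimension $nt-kt+1$ of $\Pi$ genuinely fits inside a scattered subspace of $\PG(nt-1,q)$ under the hypothesis $k\geq(n+3)/2$, i.e. getting the inequality $nt-kt+1\leq (nt-t)/2-1$ (or the even-$n$ analogue) right and dealing with any boundary case where it is tight; everything else is a direct appeal to Theorem~\ref{hoofd1}, Theorem~\ref{sc}, and the size formula for cones.
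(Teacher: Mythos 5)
Your proposal follows the paper's proof exactly: apply Construction \ref{con1} inside an $(nt-kt+1)$-dimensional space $\Pi=\langle\Omega,\Gamma\rangle$ chosen scattered with respect to $\D$ (possible by Theorem \ref{sc}), invoke Theorem \ref{hoofd1} for minimality, and read off $|B|=|K|=|\bar{B}|q^{nt-kt-1}+\frac{q^{nt-kt-1}-1}{q-1}$ from the cone size formula. The off-by-one you flag does close: for odd $n$ the required inequality $nt-kt+1\leq (nt-t)/2-1$ amounts to $t(2k-n-1)\geq 4$, which holds since $2k-n-1\geq 2$ and $t\geq 2$, while for even $n$ one needs $t(2k-n)\geq 4$, which holds since then $2k-n\geq 4$.
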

\begin{proof} Consider an $(nt-kt+1)$-dimensional space $\Pi=\langle \Omega,\Gamma\rangle$, where $\Omega$ is $(nt-kt-1)$-dimensional and $\Gamma$ a plane skew from $\Omega$ and such that $\Pi$ is scattered with respect to $\D$, which is possible in view of Theorem \ref{sc} since $k\geq (n+3)/2$. Apply Construction \ref{con1} with the spaces $\Omega,\Gamma$ and the minimal blocking set $\bar{B}$ in $\Gamma$. Then, by Theorem \ref{hoofd1}, $\B(K)$ is a minimal blocking set with respect to $(k-1)$-spaces. Moreover, since every element of $\D$ that meets $\langle \Omega,\Gamma\rangle$, meets it in exactly one point, the number of points in $B$ is equal to the number of points in $K$, which is equal to $|\bar{B}|q^{nt-kt-1}+\frac{q^{nt-kt-1}-1}{q-1}$.
\end{proof}

The following corollary shows which are the possible dimensions spanned by a blocking set obtained by Construction \ref{con1} in a scattered subspace, again using Theorem \ref{sc}. 
\begin{corollary} If $\bar{B}$ is a minimal blocking set in $\PG(2,q)$ such that every point of $\bar{B}$ lies on at least $2$ tangent lines to $\bar{B}$ and if $nt-kt+1\leq \frac{n't-t}{2}-1$ and $n'-1\leq nt-kt+1$, then there is a $(k-1)$-blocking set $B$ in $\PG(n-1,q^t)$ spanning an $(n'-1)$-space, where $B$ has size $|\bar{B}|q^{nt-kt-1}+\frac{q^{nt-kt-1}-1}{q-1}$.
\end{corollary}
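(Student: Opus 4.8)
The plan is to build the blocking set by combining the previous theorem with Theorem~\ref{sc} and then read off the spanning dimension from the earlier corollary on possible dimensions. First I would set $m := nt - kt + 1$, so that Construction~\ref{con1} lives inside an $m$-dimensional space $\Pi = \langle \Omega, \Gamma\rangle$, with $\Omega$ of dimension $nt-kt-2 = m-3$ and $\Gamma$ a plane skew from $\Omega$. The goal is to choose $\Pi$ to be simultaneously (i) scattered with respect to $\D$, so that the size of $B = \B(K)$ equals the size of the cone $K$, namely $|\bar B|q^{nt-kt-1} + \frac{q^{nt-kt-1}-1}{q-1}$ by the Remark following Lemma~\ref{kegel}, and (ii) contained in a $\D_{n'-1}$-subspace, so that $B = \B(K)$ spans (at most) an $(n'-1)$-space of $\PG(n-1,q^t)$; the hypothesis $n'-1 \le nt-kt+1 = m$ guarantees there is enough room for $\Pi$ inside a $\D_{n'-1}$-subspace, which is $(n't-1)$-dimensional.

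Next I would invoke Theorem~\ref{sc} inside the $\D_{n'-1}$-subspace, which is a copy of $\PG(n't-1,q)$ carrying the induced Desarguesian $(t-1)$-spread: when $n'$ is even it contains a scattered subspace of dimension $n't/2 - 1$, and when $n'$ is odd it contains one of dimension $(n't-t)/2 - 1$. In either case the hypothesis $nt-kt+1 \le \frac{n't-t}{2} - 1$ ensures $m$ is at most the dimension of the guaranteed scattered subspace, so I can pick an $m$-dimensional subspace $\Pi$ of it which is still scattered with respect to $\D$ (a subspace of a scattered subspace is scattered). Fixing such a $\Pi$, I split it as $\Pi = \langle \Omega, \Gamma\rangle$ with $\dim\Omega = m-3$ and $\Gamma$ a plane skew from $\Omega$, and apply Construction~\ref{con1} with $\Omega$, $\Gamma$ and the given minimal blocking set $\bar B \subset \Gamma$ all of whose points lie on at least two tangent lines.

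Then the minimality of $B = \B(K)$ is exactly Theorem~\ref{hoofd1}, and the size computation is identical to the one in the preceding theorem: since $\Pi$ is scattered, distinct points of $K$ lie in distinct spread elements, so $|B| = |K| = |\bar B|q^{nt-kt-1} + \frac{q^{nt-kt-1}-1}{q-1}$. Finally, since $K \subseteq \Pi$ and $\Pi$ lies in the $\D_{n'-1}$-subspace $\Sigma_{n'-1}$, every spread element meeting $K$ lies in $\Sigma_{n'-1}$, hence $B = \B(K) \subseteq \B(\Sigma_{n'-1})$, and $\B(\Sigma_{n'-1})$ is precisely an $(n'-1)$-space of $\PG(n-1,q^t)$ by the correspondence in Section~\ref{barlotti}; thus $B$ spans a subspace of dimension at most $n'-1$. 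For the reverse inequality I would note that, as in the earlier Corollary, $B$ cannot be contained in any smaller $\D$-subspace because $\bar B$, being a blocking set of the plane $\Gamma$, forces $K$ to span all of $\Pi$, and a scattered $\Pi$ of dimension $m$ is not contained in any $\D_{r-1}$-subspace with $rt-1 < $ the minimal $\D$-dimension needed — more carefully, one checks that $\langle \B(K)\rangle$ has dimension at least $n'-1$ by a rank argument, or one simply observes this is the content of the cited Corollary applied with the appropriate parameters.

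The main obstacle I anticipate is the bookkeeping of dimensions: making sure that an $m$-dimensional scattered $\Pi$ can be found \emph{inside} a $\D_{n'-1}$-subspace (not merely somewhere in $\PG(nt-1,q)$), which is what forces the two numerical hypotheses, and then verifying that $B$ spans \emph{exactly} an $(n'-1)$-space rather than something smaller. The first point is handled by applying Theorem~\ref{sc} relative to the induced spread on the $\D_{n'-1}$-subspace; the second is the same span argument already used to prove the earlier Corollary on dimensions, so no genuinely new difficulty arises — the theorem is essentially the conjunction of Theorem~\ref{hoofd1}, Theorem~\ref{sc}, and the dimension-spanning Corollary, assembled with the inequalities chosen precisely to make all three applicable at once.
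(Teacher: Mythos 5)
Your proposal is correct and follows essentially the same route the paper intends: this corollary is stated without proof as a direct combination of Theorem \ref{sc} (applied inside a $\D_{n'-1}$-subspace to obtain a scattered $\Pi$ of dimension $nt-kt+1$), Theorem \ref{hoofd1}, and the size count of the preceding theorem, with the containment $B\subseteq\B(\Pi)$ controlling the span. The only point worth tightening is the lower bound on the span --- choosing $\Pi$ so that $\B(\Pi)$ is not contained in a $\D_{n'-2}$-subspace, which is exactly where the hypothesis $n'-1\leq nt-kt+1$ (enough points of $\Pi$ to meet $n'$ spread elements in general position) enters; you flag this but leave it at the same level of detail as the paper does.
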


\subsection{A modification of Construction 1}
A disadvantage of Construction 1 is clearly the requirement that every point lies on at least two tangent lines to $\bar{B}$. Using the computer package `FinInG' for GAP \cite{GAP}, we found that we cannot remove this condition in general: there are $(t+1)$-subspaces in $\PG(3t-1,q)$ such that a cone over a Hermitian curve does not define a minimal blocking set in $\PG(2,q^t)$. In this subsection, we choose a particular subspace of $\PG(nt-1,q)$ so that we do not need at least two tangent lines to $\bar{B}$ in order to have minimality. 
\begin{figure}[h]
\begin{center}
\begin{tikzpicture}[scale=0.7]
\draw (0,0) ellipse (6cm and 4cm);
\draw (0,1) ellipse (4cm and 2cm);
\draw[thick] (-2,-0.5) ellipse (2cm and 2.5cm);
\draw[thick] (2,-2) ellipse (0.5cm and -0.8cm);
\draw (3,-3)--(2,0.5)--(1,-3)--cycle;
\draw[thick,dashed] (-2,-3)--(2,-2.8);
\draw[thick,dashed] (2.2,-1.27)--(-0.9, 1.6);

\node at (5,3) {  $\Pi$};
\node at (2,3) {  $\nu$};
\node at (-3.5,-1) { $\Omega$};
\node at (3,-2.1) {  $\Gamma$};
\node at (2,-2.1) { $\bar{B}$};
\node at (0,-2.5) {  $\mathbf{K}$};
\end{tikzpicture}

\end{center}
\caption{Construction 2}
\end{figure}
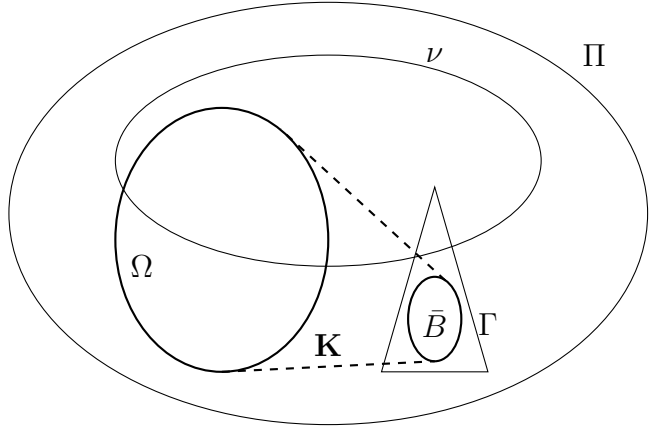

\begin{construction} \label{con2}Let $t\geq 4$. Let $\nu$ be a $\D_{n-k-1}$-space in $\PG(nt-1,q)$ and let $\Pi$ be an $(nt-kt+1)$-space through $\nu$ such that $\dim(\langle\B(\Pi)\rangle)=n-k+1$.  Let $\Omega$ be an $(nt-kt-2)$-dimensional subspace of $\Pi$, meeting $\nu$ in an $(nt-kt-4)$-dimensional space, let $\bar{B}$ be a minimal blocking set, contained in the plane $\Gamma$ of $\Pi$, which is skew from $\Omega$ and such that $\bar{B}$ is skew from $\nu$. Let $K$ be the cone with vertex $\Omega$ and base $\bar{B}$ and let $B=\B(K)$.
\end{construction}

\begin{theorem} The set $B$ from Construction 2 is a minimal $(k-1)$-blocking set in $\PG(n-1,q^t)$.
\end{theorem}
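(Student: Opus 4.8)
### Proof plan

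The plan is to follow the same two-part strategy as in Theorem~\ref{hoofd1}: the set $B=\B(K)$ is a blocking set with respect to $(k-1)$-spaces by the same argument as before (Lemma~\ref{kegel} together with Lemma~\ref{triviaal}, noting $\Omega$ is $(nt-kt-2)$-dimensional and $\Gamma$ a plane, so $K$ blocks lines in $\Pi$ hence $(kt-1)$-spaces in $\PG(nt-1,q)$), so the real work is minimality. For minimality we must show that every point $P$ of $K$ is essential, i.e. that there is a $\D_{k-1}$-space through $P$ meeting $K$ only in points of $\B(P)$. The case in which $\B(P)$ meets $\Pi$ in a subspace of dimension $\geq 1$ is handled verbatim as in Theorem~\ref{hoofd1} via the second part of Lemma~\ref{tangentspace}, so we may assume $\B(P)\cap\Pi=\{P\}$, and in particular $P\notin\nu$ since $\nu$ is a $\D$-subspace (so each of its points $R$ has $\B(R)\cap\nu$ of dimension $t-1\geq 3$). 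Thus $P$ is a point of $K$ outside $\nu$; since $\Omega$ meets $\nu$ in an $(nt-kt-4)$-space and $\bar B$ is skew from $\nu$, the key geometric feature is that $K\cap\nu=\Omega\cap\nu$ is only $(nt-kt-4)$-dimensional — very small inside the $(nt-kt-1)$-dimensional $\nu$.

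The heart of the argument is a counting step in a suitable quotient, replacing the "two tangent lines" hypothesis of Theorem~\ref{hoofd1} by the abundance of $\D_{k-1}$-spaces forced to pass through the large $\D$-subspace $\nu$. Concretely, by Lemma~\ref{tangentspace} choose a $\D_{k-2}$-space $T$ through $P$ with $T\cap\Pi=\{P\}$; I would in fact want to choose $T$ inside (or closely related to) $\nu$ so that $\langle T,\nu\rangle$ is again a $\D$-subspace of controlled dimension. Pass to the quotient $\PG(nt-1,q)/T$, in which $\D$ induces a Desarguesian $(t-1)$-spread $\D'$ and $\D_{k-1}$-spaces through $T$ correspond to elements of $\D'$; here $\Pi/T$ is $(nt-kt)$-dimensional and $\nu/T$ is a $\D'$-subspace. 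A tangent $\D_{k-1}$-space through $P$ corresponds to an element of $\D'$ meeting $\Pi/T$ in a subspace skew from $K/T$. Now $K/T$ is a cone with vertex $\Omega/T$ (of dimension $nt-kt-3$) and meets $\nu/T$ only in $\Omega/T\cap\nu/T$, which has codimension at least $3$ in the hyperplane $\nu/T$ of $\Pi/T$. The point count: the number of points of $\Pi/T$ lying on elements of $\D'$ that are \emph{not} skew from $K/T$ is bounded by $|K/T|$ times the number of points of a $\D'$-element, which by the scatteredness-type estimate used in Lemma~\ref{tangentspace} and Theorem~\ref{hoofd1} is too small to cover all points of $\nu/T\setminus(K/T)$; since $\nu/T$ is spanned by $\D'$-elements, at least one $\D'$-element through a point of $\nu/T$ is skew from $K/T$, giving the desired tangent $\D_{k-1}$-space. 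The hypotheses $t\geq 4$ and $\dim\langle\B(\Pi)\rangle=n-k+1$ are exactly what make the relevant inequality go through: $t\geq 4$ gives enough room for a $\D_{k-1}$-element (of $t(k)$ points projectively, large compared to lower-order terms) and the dimension condition on $\B(\Pi)$ pins down how $\Pi$ meets the spread so that $\B(K)$ has predictable intersection numbers with subspaces.

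The main obstacle I anticipate is making the quotient bookkeeping precise: one must verify that $T$ can be chosen so that $T\cap\Pi=\{P\}$ \emph{and} $\langle\nu,T\rangle$ remains a $\D$-subspace of the dimension needed for the estimate (this is where the hypothesis $\dim\langle\B(\Pi)\rangle=n-k+1$, i.e. that $\Pi$ is not contained in too small a $\D$-subspace, is used to guarantee such a $T$ exists), and then to track the exact dimensions of $\Omega/T\cap\nu/T$, $K/T$, and $\nu/T$ so that the inequality
\[
|K/T|\cdot\frac{q^{t}-1}{q-1} \;<\; |\nu/T| \;-\; |\,\Omega/T\cap\nu/T\,|
\]
(comparing points covered versus points available on the $\D'$-ruled hyperplane $\nu/T$) is strictly satisfied for all $q$ once $t\geq 4$. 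A secondary point to check carefully is that the excluded low-dimensional case ($\B(P)\cap\Pi$ of dimension $\geq 1$) together with $P\notin\nu$ really does partition all possibilities, which follows since any $P\in\nu$ automatically has $\B(P)$ meeting $\Pi$ in dimension $\geq 1$. Modulo these dimension computations, the structure of the proof is exactly parallel to that of Theorem~\ref{hoofd1}, with the $\D$-subspace $\nu$ playing the role that the two tangent hyperplanes $H_1,H_2$ played there.
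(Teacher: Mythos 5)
Your reduction to the case $\B(P)\cap\Pi=\{P\}$ and the passage to the quotient $\Pi/T$ are sound, but the core of your minimality argument fails. First, the claim that $K\cap\nu=\Omega\cap\nu$ is false: each generator $\langle \bar{P},\Omega\rangle$ of the cone is an $(nt-kt-1)$-space, as is $\nu$, and both lie in the $(nt-kt+1)$-space $\Pi$, so each generator meets $\nu$ in a subspace of dimension at least $nt-kt-3$, i.e.\ of codimension at most $2$ in $\nu$ --- far larger than the $(nt-kt-4)$-space $\Omega\cap\nu$. This kills your strategy of locating the tangent element ``through a point of $\nu/T$'': such a $\D'$-element is contained in the $\D'$-subspace $\nu/T$, so the corresponding $\D_{k-1}$-space $E=\langle T,S_0\rangle$ lies in $\langle T,\nu\rangle$ and meets $\nu$ in a $(t-1)$-space; since $(t-1)+(nt-kt-3)\geq nt-kt-1$ whenever $t\geq 3$, that $(t-1)$-space meets every generator of $K$ inside $\nu$, so $E\cap K$ always contains points other than $P$. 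Hence \emph{no} tangent $\D_{k-1}$-space through $T$ exists inside $\langle T,\nu\rangle$, and the hypothesis $t\geq 4$ works against you here rather than for you. Consistently, your displayed inequality cannot hold: $K/T$ contains the injective image of a full generator, so the left-hand side is at least of order $q^{nt-kt+t-2}$, which already exceeds the total number of points of $\Pi/T$, while $|\nu/T|$ is only of order $q^{nt-kt-1}$.

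The paper uses $\nu$ in exactly the opposite way. Because $\nu/T$ is a $\D'$-ruled hyperplane of the $(nt-kt)$-dimensional space $\Pi/T$, every element of $\D'$ \emph{not} meeting $\nu/T$ intersects $\Pi/T$ in at most a single point (a subspace disjoint from a hyperplane is a point); so the tangent element must be sought \emph{outside} $\nu/T$. One then invokes Lemma \ref{HP} with a single tangent line --- this is where the minimality of $\bar{B}$ enters, which your proposal never uses in this case --- to obtain a hyperplane $H$ of $\Pi$ through $P$ with $H\cap K$ of codimension $2$; the condition that $\Omega\cap\nu$ is only $(nt-kt-4)$-dimensional guarantees $H/T\neq\nu/T$, so one can choose a point $Q$ of $H/T$ outside both $K/T$ and $\nu/T$, and the $\D'$-element through $Q$ together with $T$ spans the desired tangent $\D_{k-1}$-space. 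No counting is needed once $\nu/T$ controls the intersections. (Your handling of the points of $K\cap\nu$ via the second part of Lemma \ref{tangentspace} is correct and matches in substance the paper's separate argument using the cone $\B(\Pi)$.)
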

\begin{proof} The fact that $B$ is a $(k-1)$-blocking set follows from Lemma \ref{triviaal}.

The set $\B(\Pi)$ in $\PG(n-1,q^t)$ is a cone $C$ with vertex an $(n-k-1)$-space $\B(\nu)$ and base an $\F_q$-subline $\ell$. In the quotient space $\PG(n-1,q^t)/\B(\nu)$ we easily find a $(k-2)$-space skew from $\ell/\B(\nu)$, which forces the existence of a $(k-2)$-space $S$ skew from $C$. Every point $P$ in $K\cap \nu$ corresponds to a point $\B(P)$ of $B$ lying on the tangent $(k-1)$-space $\langle \B(\mu), S\rangle$, which means that the point $\B(P)$ is essential if $P\in \nu\cap K$.

Now let $P$ be a point of $K$, not contained in $\nu$. As in the proof of Theorem \ref{hoofd1}, we may restrict ourselves to the case where $\B(P)\cap \Pi=\{P\}$ and we know that there exists a $\D_{k-2}$-space $T$ through $\B(P)$ such that $T$ meets $\Pi$ only in the point $P$. We have that the quotient space $\Pi/T$ in $\PG(nt-1,q)/T\cong \PG(nt-kt+t-1,q)$ is $(nt-kt)$-dimensional and that $\D$ induces a Desarguesian spread $\D'$ in $\PG(nt-kt+t-1,q)$. Since $\Pi/T$ contains a $\D_{n-k-1}$-space $\nu/T$, every element of $\D'$ not meeting $\nu/T$ meets $\Pi/T$ in a single point. By Lemma \ref{HP}, since $\bar{B}$ is minimal, $P$ lies on at least one hyperplane $H$ of $\Pi$ meeting $K$ only in a space of codimension $2$. 

Since $\Omega$ meets $\nu$ in an $(nt-kt-4)$-space, the $(nt-kt-1)$-space $H/T$, is different from the $(nt-kt-1)$-space $\nu/T$. So we can consider a point $Q$ of $H/T$ in $\Pi/T$ which is not contained in $K$ nor $\nu/T$. This point $Q$ corresponds to a spread element $\D'$ which together with $T$ induces a tangent $\D_{k-1}$-space to $\B(P)$ in $\PG(n-1,q^t)$, which shows that $P$ is essential.
\end{proof}
Note that by construction, $\B(K)$ contains an $(n-k)$-space.

\begin{proposition} The $(k-1)$-blocking set $B$ in $\PG(n-1,q^t)$ obtained from Construction \ref{con2} spans a space of dimension $(n-k+1)$.
\end{proposition}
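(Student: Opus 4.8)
The plan is to determine $\dim\langle B\rangle$ by matching an upper and a lower bound. For the upper bound, $K\subseteq\Pi$ gives $B=\B(K)\subseteq\B(\Pi)$, hence $\langle B\rangle\subseteq\langle\B(\Pi)\rangle$, and by the hypothesis of Construction~\ref{con2} the latter has dimension $n-k+1$; so $\dim\langle B\rangle\leq n-k+1$. For the lower bound I want to exhibit inside $B$ the $(n-k-1)$-space $\B(\nu)$ together with two points of $B$ that project from $\B(\nu)$ to two distinct points; these then span an $(n-k+1)$-space lying in $\langle B\rangle$.

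The first task is to show $\B(\nu)\subseteq B$. Since the vertex $\Omega$ lies on $K$, we have $\Omega\cap\nu\subseteq K$. Now $\Omega\cap\nu$ has dimension $nt-kt-4$ and lies in the $\D_{n-k-1}$-space $\nu$, which has dimension $nt-kt-1$ and carries the induced Desarguesian $(t-1)$-spread. Because $t\geq 4$ we have $(t-1)+(nt-kt-4)\geq nt-kt-1$, so $\Omega\cap\nu$ meets every element of this spread; hence $K$ meets every element of $\D$ contained in $\nu$, i.e.\ $\B(\nu)\subseteq\B(K)=B$, and $\langle B\rangle$ contains the $(n-k-1)$-space $\B(\nu)$.

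Next I locate two suitable points. The subspaces $\Gamma$ and $\nu$ of $\Pi$ have dimensions summing to $\dim\Pi$, so they meet; and $\Gamma\cap\nu$ is not a line, since $\bar{B}\subseteq\Gamma$ blocks every line of $\Gamma$ while being skew from $\nu$. Thus $\Gamma\cap\nu=\{Z\}$ for a point $Z\notin\bar{B}$. Choose two distinct lines $m_1,m_2$ of $\Gamma$ through $Z$; each meets $\bar{B}$ in a point $P_i\neq Z$, and $P_1,P_2\in\bar{B}\subseteq K$, so $\B(P_1),\B(P_2)\in B$, neither lying in $\B(\nu)$ because $P_i\notin\nu$. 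Projection from $\B(\nu)$ corresponds, via field reduction, to projection from $\nu$ in $\PG(nt-1,q)$; since $Z\in\nu$ we get $\langle P_i,\nu\rangle=\langle m_i,\nu\rangle$, and as $\langle m_1,m_2\rangle=\Gamma$ projects from $\nu$ onto the line $\Pi/\nu$, the images of $m_1$ and $m_2$ are distinct points of that line. This line is not contained in a single spread element of the induced spread (otherwise $\B(\Pi)$ would project from $\B(\nu)$ to a single point and $\dim\langle\B(\Pi)\rangle$ would be $n-k$, not $n-k+1$), so those two points lie in distinct spread elements. Hence $\B(P_1)$ and $\B(P_2)$ project to two distinct points modulo $\B(\nu)$, so $\langle\B(\nu),\B(P_1),\B(P_2)\rangle$ has dimension $n-k+1$ and is contained in $\langle B\rangle$. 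Together with the upper bound this yields $\dim\langle B\rangle=n-k+1$.

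The step requiring the most care is $\B(\nu)\subseteq B$: this is precisely where the hypothesis $t\geq 4$ is used, making the $(nt-kt-4)$-space $\Omega\cap\nu$ a blocking set with respect to the spread of $\nu$. The remaining point to handle carefully — routine but not entirely automatic — is the compatibility between projecting from the $\D$-subspace $\nu$ in $\PG(nt-1,q)$ and projecting from $\B(\nu)$ in $\PG(n-1,q^t)$, together with the identification of $\Pi/\nu$ as a genuine line meeting more than one spread element.
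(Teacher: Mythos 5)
Your argument is correct. The upper bound is exactly the paper's: $K\subseteq\Pi$ gives $B\subseteq\B(\Pi)$, whose span is $(n-k+1)$-dimensional by hypothesis. For the lower bound the paper only remarks that ``by construction'' the dimension cannot drop to $n-k$, whereas you supply a complete argument, and it holds up: the hypothesis $t\geq 4$ is precisely what makes the $(nt-kt-4)$-space $\Omega\cap\nu$ meet every $(t-1)$-dimensional spread element inside the $(nt-kt-1)$-space $\nu$, so that $\B(\nu)\subseteq B$ contributes an $(n-k-1)$-space to $\langle B\rangle$; the dimension count forces $\Gamma\cap\nu$ to be a single point $Z$, which lies outside $\bar{B}$ because $\bar{B}$ is skew from $\nu$; and two lines of $\Gamma$ through $Z$ produce points $P_1,P_2\in\bar{B}$ whose images in the quotient line $\Pi/\nu$ are distinct and cannot lie in one element of the induced spread, since otherwise $\Pi/\nu$ would be contained in that element and $\langle\B(\Pi)\rangle$ would only be $(n-k)$-dimensional. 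The compatibility of projection from $\nu$ with projection from $\B(\nu)$, which you flag as the delicate point, is indeed fine because $\nu$ is a $\D$-subspace, so every spread element is either contained in $\langle\nu,R\rangle$ or disjoint from it. In short, your proof fills in a genuine gap that the paper leaves as an assertion, and does so correctly.
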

\begin{proof} The set $B$ is contained in the subspace $\langle \B(\Pi)\rangle$, which spans an $(n-k+1)$-dimensional space. Note that it is by construction not possible that $\langle \B(\Pi)\rangle=n-k+1$ and $\langle K\rangle=n-k$.
\end{proof}

\begin{proposition} Let $\Pi$ be so that $\langle \B(\Pi)\rangle$ is $(n-k+1)$-dimensional. The $(k-1)$-blocking set $B$ of $\PG(n-1,q^t)$ has size $$|\bar{B}|(q^{nt-kt-1}-q^{nt-kt-3})+q^{nt-kt-2}+q^{nt-kt-3}+\epsilon,$$
where $\epsilon$ is at least $1$.
\end{proposition}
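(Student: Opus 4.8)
The plan is to count the points of $B=\B(K)$ by carefully analysing how the spread $\D$ meets the cone $K$, using the special position of $\Pi$ relative to the $\D_{n-k-1}$-space $\nu$. Recall that $K$ is the cone with vertex the $(nt-kt-2)$-space $\Omega$ and base the minimal blocking set $\bar{B}$ in the plane $\Gamma$, so by the Remark following Lemma \ref{kegel}, $K$ has exactly $|\bar{B}|q^{nt-kt-1}+\frac{q^{nt-kt-1}-1}{q-1}$ points. The size of $B$ is then $|K|$ minus a correction term accounting for spread elements that meet $K$ in more than one point; since $\Pi$ is \emph{not} scattered, the only such elements are those lying inside $\B(\Pi)$, and these are the $q+1$ points of the $\F_q$-subline $\ell=\B(\Pi)$, together with the behaviour on the $\D_{n-k-1}$-space $\nu$ which is entirely partitioned by spread elements.

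First I would describe the points of $\B(K)$ as a disjoint union: the image $\B(K\setminus\nu)$ together with $\B(K\cap\nu)$. Since $\Omega$ meets $\nu$ in an $(nt-kt-4)$-space and $\bar{B}$ is skew from $\nu$, the intersection $K\cap\nu$ is itself a cone with vertex $\Omega\cap\nu$ and base the projection of $\bar{B}$; its image under $\B$ is an $(n-k-1)$-dimensional linear set contained in $\B(\nu)\cong\PG(n-k-1,q)$, and since $\nu$ is a $\D$-subspace this image is just a point set inside $\B(\nu)$ whose cardinality I would compute from the dimension count $\dim(\Omega\cap\nu)=nt-kt-4$, giving roughly $q^{nt-kt-3}+\cdots$ points. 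Next I would count the points of $\B(K\setminus\nu)$: a generic spread element meeting $K\setminus\nu$ outside of $\B(\Pi)$ meets $\Pi$, hence $K$, in a single point, so it contributes exactly one point to $B$; the exceptions come from spread elements contained in the plane $\langle\ell\rangle=\B(\Pi)$-cone $C$, i.e. essentially from the subline $\ell$, where a full projective line over $\F_q$ inside $\Pi$ meets $K$ in up to $q+1$ points but contributes only one point to $B$. Collecting the overcount from these few exceptional lines, subtracting it from $|K|$, and adding back the contribution of $\B(K\cap\nu)$ yields the stated expression; the term $\epsilon\geq 1$ absorbs the contribution of the point $\B(\nu\cap Y)$ / the vertex line, which always lies in $B$ but whose precise multiplicity depends on the (unspecified) choice of $\bar{B}$ and of $\Pi$, so one can only bound it below.

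The main obstacle will be bookkeeping the overlaps precisely rather than up to lower-order terms: one has to be sure that outside the distinguished subspace $\nu$ and the distinguished cone $\B(\Pi)$ every relevant spread element really does meet $K$ in at most one point, which requires knowing that $\Pi$ is ``as scattered as possible'' given that it is forced to contain the $\D_{n-k-1}$-space $\nu$ — this is exactly the content of the hypothesis $\dim\langle\B(\Pi)\rangle=n-k+1$, saying $\B(\Pi)$ is a cone over an $\F_q$-\emph{subline} (not a full line), so the non-scattered part of $\Pi$ is as small as it can be. I would isolate this as a short lemma: \emph{if $\Pi$ is an $(nt-kt+1)$-space through the $\D_{n-k-1}$-space $\nu$ with $\dim\langle\B(\Pi)\rangle=n-k+1$, then every spread element meeting $\Pi$ but not meeting it inside $\nu$ or inside the plane carrying $\ell$ meets $\Pi$ in exactly one point.} Granting that, the count of $|K\setminus\nu|$, the count of $|K\cap\nu|$ via the dimension of $\Omega\cap\nu$, and the geometric series $\frac{q^{m}-1}{q-1}$ identities combine to give the displayed formula with the error term $\epsilon\geq 1$; the inequality $\epsilon\geq1$ is immediate because $\B(\nu\cap Y)$ (equivalently the base point of the subline $\ell$ through which $\bar{B}$ is blocked) is a point of $B$ not yet accounted for in the ``generic'' count.
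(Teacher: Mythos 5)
Your overall strategy --- split $K$ into $K\setminus\nu$ and $K\cap\nu$, use the hypothesis $\dim\langle\B(\Pi)\rangle=n-k+1$ to control how spread elements meet $\Pi$, and leave the contribution of $K\cap\nu$ as an undetermined $\epsilon\geq 1$ --- is the paper's strategy. But your key lemma, and the bookkeeping built on it, contain a genuine error. You claim that the spread elements meeting $K$ in more than one point include, besides those inside $\nu$, the elements ``of the subline $\ell$'', each allegedly meeting $\Pi$ in a full $\F_q$-line and hence producing an overcount of up to $q$ per element that must be subtracted from $|K|$; correspondingly your proposed lemma carves out an exception for spread elements meeting $\Pi$ ``inside the plane carrying $\ell$''. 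This is false. A spread element $R$ not contained in $\nu$ is disjoint from $\nu$ (the spread partitions the $\D$-subspace $\nu$), so $R\cap\Pi$ is a subspace of $\Pi$ disjoint from the codimension-$2$ subspace $\nu$, hence a point or a line; if it were a line $m$, then $\Pi=\langle\nu,m\rangle\subseteq\langle\nu,R\rangle$, and $\langle\nu,R\rangle$ is a $\D_{n-k}$-subspace, so $\B(\Pi)$ would lie in an $(n-k)$-space, contradicting $\dim\langle\B(\Pi)\rangle=n-k+1$. Thus \emph{every} spread element not contained in $\nu$ meets $\Pi$, and a fortiori $K$, in at most one point; the base subline $\ell$ of the cone $\B(\Pi)$ consists precisely of such elements and generates no overcount at all. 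If you actually carried out the subtraction you describe, your total would come out strictly smaller than the stated formula.

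Once that is repaired, the proof reduces to numerology you have not actually done: (i) each generator $\langle\Omega,P\rangle$, $P\in\bar{B}$, meets $\nu$ in an $(nt-kt-3)$-space (dimension formula inside $\Pi$, using $\dim(\Omega\cap\nu)=nt-kt-4$ and $P\notin\nu$), hence has exactly $q^{nt-kt-1}-q^{nt-kt-3}$ points outside $\Omega\cup\nu$; (ii) $|\Omega\setminus\nu|=q^{nt-kt-2}+q^{nt-kt-3}$; each of these points is the unique intersection of its spread element with $K$, and adding $\epsilon=|\B(K\cap\nu)|$ gives the formula. Note that $\epsilon$ counts spread elements \emph{inside} $\nu$ meeting $K$, not points of $K\cap\nu$ (your ``roughly $q^{nt-kt-3}+\cdots$'' computes the latter and cannot be pinned down as a contribution to $|B|$, since it depends on how $K\cap\nu$ sits relative to the spread elements of $\nu$), and $\epsilon\geq1$ simply because $\Omega\cap\nu\neq\emptyset$; there is no point ``$\B(\nu\cap Y)$'' here, as $Y$ belongs to the MPS set-up and not to Construction \ref{con2}.
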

\begin{proof} Every spread element of $\D$ meeting $\Pi$ is either entirely contained in $\Pi$ or meets $\Pi$ in a point. The cone $K$ consists of a union of $(nt-kt-1)$-spaces through $\Omega$. Each of these $(nt-kt-1)$-spaces contains $q^{nt-kt-1}$ points, not in $\Omega$ of which, since $\bar{B}$ does not meet $\nu$, $q^{nt-kt-1}-q^{nt-kt-3}$ are not contained in $\nu$. There are $q^{nt-kt-2}+q^{nt-kt-3}$ points in $\Omega\setminus \nu$. The only points of $B$ that we have not counted yet are the points of $K$ in $\nu$. It is clear that, since $\Omega$ meets $\nu$ non-trivially, this number is at least $1$.
\end{proof}
Applying the previous proposition for $k=n-1$, and using that a $\D_{n-k-1}$-space is a spread element, we find the following corollary. The second part follows from the fact that a Hermitian curve in $\PG(2,q)$, $q$ square, is a minimal blocking set of size $q\sqrt{q}+1$.

\begin{corollary} \label{crr}The blocking set $B$ in $\PG(n-1,q^t)$ obtained from a planar blocking set $\bar{B}$ by Construction \ref{con2} has size $$|\bar{B}|(q^{t-1}-q^{t-3})+q^{t-2}+q^{t-3}+1.$$ In particular, if $q$ is a square, we find minimal blocking sets in $\PG(2,q^t)$ of size $q^t\sqrt{q}+q^{t-1}-q^{t-2}\sqrt{q}+q^{t-2}+1$.
\end{corollary}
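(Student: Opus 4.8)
The plan is to simply substitute the known parameters into the size formula established in the preceding Proposition, with $k=n-1$. First I would observe that the Proposition on the size of $B$ gives, for general $k$,
\[
|B| = |\bar B|(q^{nt-kt-1}-q^{nt-kt-3}) + q^{nt-kt-2} + q^{nt-kt-3} + \epsilon,
\]
where $\epsilon$ counts the points of $K$ lying in $\nu$, and $\epsilon\geq 1$. Setting $k=n-1$ collapses every exponent $nt-kt$ to $t$, so the formula becomes $|\bar B|(q^{t-1}-q^{t-3})+q^{t-2}+q^{t-3}+\epsilon$; what remains is to pin down $\epsilon$ exactly in this special case. When $k=n-1$, a $\D_{n-k-1}$-space is a $\D_0$-space, i.e. a single spread element, so $\nu$ is itself an element of $\D$. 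By Construction \ref{con2}, $\Omega$ meets $\nu$ in an $(nt-kt-4)=(t-4)$-dimensional space, and $\bar B$ is chosen skew from $\nu$. Hence the points of $K$ in $\nu$ are exactly the points of $\langle\Omega,P'\rangle\cap\nu$ as $P'$ ranges over $\bar B$; since each line $\langle\Omega, P'\rangle$ meets $\Omega$ in $\Omega$ itself and $\bar B\cap\nu=\emptyset$, the intersection $K\cap\nu$ equals $\Omega\cap\nu$, an $(t-4)$-space. But $\nu$ being a single spread element collapses to a single point $\B(P)$ in $\PG(n-1,q^t)$, so in counting points of $B$ the whole set $K\cap\nu$ contributes exactly one point, giving $\epsilon=1$. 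This yields the first displayed size $|\bar B|(q^{t-1}-q^{t-3})+q^{t-2}+q^{t-3}+1$.

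For the second assertion I would invoke the stated fact that a Hermitian curve (unital) in $\PG(2,q)$ with $q$ a square is a minimal blocking set of size $q\sqrt q+1$, and — as remarked in the paper after Theorem \ref{hoofd1} (any minimal planar blocking set of size at most $2q-1$, and in particular one of size $q\sqrt q + 1$ when this is $\leq 2q-1$... ) — one must check the hypothesis of Construction \ref{con2}, namely that $\bar B$ is a genuine minimal planar blocking set; this holds for a Hermitian curve. Then I substitute $|\bar B|=q\sqrt q+1$ into the first formula:
\[
(q\sqrt q+1)(q^{t-1}-q^{t-3}) + q^{t-2}+q^{t-3}+1 = q^t\sqrt q - q^{t-2}\sqrt q + q^{t-1}-q^{t-3}+q^{t-2}+q^{t-3}+1,
\]
and the two $q^{t-3}$ terms cancel, leaving $q^t\sqrt q + q^{t-1} - q^{t-2}\sqrt q + q^{t-2} + 1$, which is the claimed size.

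The only real subtlety — and the step I expect to need the most care — is the determination $\epsilon = 1$: one has to be sure that in the case $k = n-1$ the contribution of $K \cap \nu$ to $|B| = |\B(K)|$ is exactly one point and not zero, i.e. that $K$ genuinely meets $\nu$. This is guaranteed because $\Omega \cap \nu$ is $(t-4)$-dimensional (hence nonempty for $t \geq 4$, which is assumed) and $\Omega \subseteq K$, so $K \cap \nu \supseteq \Omega \cap \nu \neq \emptyset$; thus $\epsilon \geq 1$ is met with equality since $\nu$ collapses to a single point under $\B$. Everything else is a routine polynomial expansion, and the minimality of $B$ is already established by the Theorem preceding the size Propositions, so it need not be reproved here.
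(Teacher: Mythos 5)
Your proof is correct and follows exactly the paper's (unstated) argument: apply the preceding proposition with $k=n-1$, observe that $\nu$ is then a single spread element so that $\epsilon=1$ (it is nonempty on $K$ because $\Omega\cap\nu$ is $(t-4)$-dimensional and $t\geq 4$, and it collapses to one point under $\B$), and substitute $|\bar{B}|=q\sqrt{q}+1$ for a Hermitian curve. One harmless slip: your intermediate claim that $K\cap\nu=\Omega\cap\nu$ is not right --- each generator $\langle \Omega,P'\rangle$ meets $\nu$ in an $(nt-kt-3)$-space, which is exactly where the proposition's count of $q^{nt-kt-3}$ points per generator inside $\nu$ comes from --- but this does not affect your conclusion, since $\epsilon$ counts spread elements of $\nu$ meeting $K$ rather than points, and $\nu$ is a single spread element.
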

In \cite{pol}, Mazzocca and Polverino construct blocking sets in $\PG(2,q^t)$ arising from cones in $\PG(2t,q)$. We want to point out that, starting from a Hermitian curve $\bar{B}$ in $\PG(2,q)$, $q$ square, they construct minimal blocking sets in $\PG(2,q^t)$ of size $q^{t-1}(|\bar{B}|-1)+1=q^{t}\sqrt{q}+1$, which is smaller than the size of the minimal blocking set obtained from a Hermitian curve in the previous corollary.

\section{Minimal blocking sets with respect to the hyperplanes}\label{hyper}

\subsection{The main observation}
We have seen in Lemma \ref{triviaal} that, if $B'$ is a blocking set with respect to $(nt-t-1)$-spaces in $\PG(nt-1,q)$, then $B=\B(B')$ is a blocking set (w.r.t. hyperplanes) in $\PG(n-1,q^t)$. In the following theorem, we show that a kind of reverse statement also holds.

\begin{theorem} \label{obs} Let $B$ be a minimal blocking set with respect to the hyperplanes of $\PG(n-1,q^t)$, then $B$ can be written as $\B(B')$, where $B'$ is a minimal blocking set with respect to $(nt-t-1)$-spaces of $\PG(nt-1,q)$.
\end{theorem}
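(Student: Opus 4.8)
The plan is to realize $B\subseteq\PG(n-1,q^t)$ inside $\PG(nt-1,q)$ via field reduction and then carefully enlarge the naive preimage to a set that is simultaneously blocking with respect to $(nt-t-1)$-spaces and minimal. The obvious first candidate for $B'$ is the union of spread elements corresponding to the points of $B$, i.e.\ the set $\widehat{B}:=\bigcup_{P\in B}S_P$ where $S_P\in\D$ is the $(t-1)$-space associated with $P$; by Lemma~\ref{triviaal} (or rather the way $\B$ interacts with $\D_{n-2}$-spaces) this set blocks every $(nt-t-1)$-space, since by Lemma~\ref{hyperspan} each hyperplane of $\PG(nt-1,q)$ contains a unique $\D_{n-2}$-space, hence each $(nt-t-1)$-space is contained in such a hyperplane and meets at least one $S_P$. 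So the first step is to record that $\B(\widehat{B})=B$ and that $\widehat{B}$ is blocking. The trouble is that $\widehat{B}$ is almost never minimal: a whole spread element $S_P$ is far more than what is needed to block all $\D_{n-2}$-spaces through $S_P$ once $P$ is essential.

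The second and main step is to pass to a minimal blocking subset $B'\subseteq\widehat{B}$ while keeping $\B(B')=B$. Here I would use the essentiality of every point $P$ of $B$: since $B$ is minimal with respect to hyperplanes of $\PG(n-1,q^t)$, for each $P\in B$ there is a hyperplane $H_P$ with $H_P\cap B=\{P\}$; field-reducing, $H_P$ corresponds to a $\D_{n-2}$-space $\Sigma_P$ that meets $\widehat{B}$ in exactly the one spread element $S_P$ (and in no other, because no other point of $B$ lies on $H_P$). This means that in any blocking subset $B'$ of $\widehat{B}$ with $\B(B')=B$, the intersection $B'\cap S_P$ must be nonempty and in fact must block all $(nt-t-1)$-spaces of $\PG(nt-1,q)$ inside suitable hyperplanes that meet $\widehat{B}$ only in $S_P$ --- but $S_P$ itself is a $(t-1)$-space, so inside it the requirement is that $B'\cap S_P$ blocks a family of subspaces of $S_P$. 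One then chooses $B'\cap S_P$ to be a minimal blocking set of $S_P$ with respect to that induced family (a single point if the induced dimension is $t-2$, i.e.\ if only one hyperplane $\Sigma_P$ must be covered). The cleanest route is: take $B'$ to be any minimal blocking set (w.r.t.\ $(nt-t-1)$-spaces) of $\PG(nt-1,q)$ that is contained in $\widehat{B}$ --- such a minimal $B'$ exists because $\widehat{B}$ is a (finite) blocking set, so a minimal one sits inside it --- and then argue $\B(B')=B$. The inclusion $\B(B')\subseteq\B(\widehat{B})=B$ is immediate; the reverse inclusion $B\subseteq\B(B')$ is exactly where essentiality is used: if some $P\in B$ had $B'\cap S_P=\emptyset$, then $B'$ would miss the spread element $S_P$, hence $\B(B')$ would miss $P$, so $\B(B')\subsetneq B$; but then $\B(B')$ would be a proper blocking subset of $B$ in $\PG(n-1,q^t)$ by Lemma~\ref{triviaal}, contradicting minimality of $B$. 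Therefore $\B(B')=B$, and $B'$ is by construction a minimal blocking set with respect to $(nt-t-1)$-spaces of $\PG(nt-1,q)$.

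The main obstacle, and the point requiring genuine care, is the last implication: that a minimal $B'\subseteq\widehat{B}$ necessarily has $\B(B')=B$ rather than something strictly smaller. The danger is a minimal blocking set $B'$ of $\PG(nt-1,q)$ that touches fewer spread elements than $\widehat{B}$ does --- i.e.\ that $\B$ is not "injective enough." The resolution is precisely the contrapositive argument above using Lemma~\ref{triviaal}: any blocking set $B''$ of $\PG(nt-1,q)$ with respect to $(nt-t-1)$-spaces yields, via $\B$, a blocking set $\B(B'')$ of $\PG(n-1,q^t)$; if $B''\subseteq\widehat B$ then $\B(B'')\subseteq B$; and if the inclusion were proper, minimality of $B$ is violated. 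So $\B(B'')=B$ for \emph{every} blocking subset $B''$ of $\widehat B$, in particular for the minimal one. I would also double-check the edge cases $k$-small or $t=1$ (where the statement is trivially an identity) and confirm that "minimal blocking set contained in $\widehat B$ exists" is legitimate --- it is, since one can greedily discard inessential points of the finite set $\widehat B$ until none remain.
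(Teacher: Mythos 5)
Your strategy is exactly the paper's: take $\widehat B=\bigcup_{P\in B}S_P$, show it blocks all $(nt-t-1)$-spaces, pass to a minimal blocking subset $B'$, and use minimality of $B$ to force $\B(B')=B$. Your handling of the second half is sound and in fact marginally cleaner than the paper's: you invoke Lemma~\ref{triviaal} to say that if $B'$ missed some $S_P$ then $\B(B')$ would be a proper blocking subset of $B$, whereas the paper argues via the tangent hyperplane at $P$, whose corresponding $\D_{n-2}$-space would then contain no point of $B'$; these are two phrasings of the same use of minimality.

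The one genuine gap is in the first half, where you assert that an arbitrary $(nt-t-1)$-space $\pi$ ``is contained in such a hyperplane and meets at least one $S_P$.'' Containment in a hyperplane $H$ whose unique $\D_{n-2}$-space $\pi'$ contains some $S_P$ does not by itself give $\pi\cap S_P\neq\emptyset$; this is the only nontrivial geometric step and it requires a dimension count, which the paper supplies: $\pi$ and $\pi'$ are both $(nt-t-1)$-spaces inside the $(nt-2)$-dimensional $H$, so $\dim(\pi\cap\pi')\geq nt-2t$, i.e.\ $\pi\cap\pi'$ has codimension at most $t-1$ in $\pi'$; since $S_P$ is a $(t-1)$-space contained in $\pi'$, it must meet $\pi\cap\pi'$, hence $\pi$ contains a point of $\widehat B$. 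Without this count your ``hence'' is unjustified (one also needs to treat separately, as you implicitly do, the trivial case where $\pi$ is itself a $\D_{n-2}$-space). With that line inserted, your proof coincides with the paper's.
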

\begin{proof} Let $\S(B)$ denote the set of spread elements corresponding to the points of $B$ and let $\tilde{B}$ be the point set of the elements of $\S(B)$. Since $B$ is a blocking set with respect to the hyperplanes of $\PG(n-1,q^t)$, every $\D_{n-2}$-space contains an element of $\S(B)$, and hence, certainly a point of $\tilde{B}$ (in fact, at least $\frac{q^t-1}{q-1}$ of them).

Now consider an $(nt-t-1)$-space $\pi$ of $\PG(nt-1,q)$ which is not a $\D_{n-2}$-space. Let $H$ be a hyperplane of $\PG(nt-1,q)$ through $\pi$. We know from Lemma \ref{hyperspan} that $H$ contains a $\D_{n-2}$-space $\pi'$. Since $B$ is a blocking set with respect to the hyperplanes, the space $\pi'$ contains at least one element of $\S(B)$, say $S$. Now $\pi\cap \pi'$ is at least $(nt-2t)$-dimensional, hence, since $S$ is $(t-1)$-dimensional and contained in $\pi'$, $S$ meets $\pi$ non-trivially, and hence, $\pi$ contains at least one point of the set $\tilde{B}$. This implies that $\tilde{B}$ is a blocking set with respect to $(nt-t-1)$-spaces.

Now let $B'$ be a minimal blocking set with respect to $(nt-t-1)$-spaces contained in $\tilde{B}$ ($B'$ is not necessarily unique). To show that $\B(B')=B$, we show that in every element of $\S(B)$, there lies at least one point of $B'$. Suppose that there is an element of $\S(B)$, say $T$, that does not contain a point of $B'$. Since $B$ is a minimal blocking set, there is a tangent hyperplane in $\PG(n-1,q^t)$ to $B$ in the point corresponding to $T$. This tangent hyperplane corresponds to a $\D_{n-2}$-space (which is an $(nt-t-1)$-space) meeting $\S(B)$ in exactly the element $T$, hence, does not contain a point of the blocking set with respect to $(nt-t-1)$-spaces $B'$, a contradiction.
\end{proof}

\begin{remarkN} The blocking set $B'$ found in the previous theorem is not necessarily unique. Consider for example the blocking set $B$ in $\PG(2,q^t)$ consisting of all points of a line. Then $B$ corresponds to the set of spread elements in a $\D_1$-space $\pi$ of $\PG(3t-1,q)$. It is clear that $B=\B(\mu)$ for any subspace $\mu$ of dimension $t$ in $\pi$. Take a plane $\nu$ in $\pi$ and a $(2t-4)$-dimensional subspace $\nu'$ in $\pi$, skew from $\nu$, then the cone with vertex $\nu'$ and base a large minimal blocking set in $\nu$ (e.g. a Hermitian curve if $q$ is a square) is a minimal blocking set $B''$ with respect to $(t-1)$-spaces in $\pi$, hence, $B=\B(B'')$. Note that $\mu$ is a small minimal blocking set with respect to $(2t-1)$-spaces in $\PG(3t-1,q)$, whereas $B''$ is a large minimal blocking set with respect to $(2t-1)$-spaces in $\PG(3t-1,q)$.
\end{remarkN}

\begin{remarkN} Let $B$ be a Hermitian curve in $\PG(2,q^2)$. Then $B$ is a minimal blocking set of size $q^3+1$. If we apply field reduction to $B$, then we obtain a set $\mathcal{S}(B)$ of $q^3+1$ lines in $\PG(5,q)$ of which the point set $\tilde{B}$ blocks all $3$-spaces by Theorem \ref{obs}. The $(q^3+1)(q+1)$ points do not form a minimal blocking set with respect to $3$-spaces. It is not too hard to check that $\tilde{B}$ is the point set of an elliptic quadric in $\PG(5,q)$ (see e.g. \cite{LaVa13}) and that the $q^3+q^2+q+1$ points of a parabolic quadric $\Q(4,q)$ in $\tilde{B}$ form a minimal blocking set $B'$ with respect to $3$-spaces in $\PG(5,q)$, hence, $B=\B(B')$. But in general, it is not easy to construct a minimal blocking set $B'$ with respect to $(nt-t-1)$-spaces contained in the point set of the spread elements corresponding to $B$.
\end{remarkN}

\subsection{Small minimal blocking sets and the linearity conjecture} \label{linconj}
The {\em linearity conjecture} for small blocking sets states that all small minimal blocking sets in $\PG(n-1,q^t)$ are {\em linear sets} over $\F_p$, where $q^t=p^h$, $p$ prime (see \cite{sziklai}).

We have the following result of T. Sz\H{o}nyi and Zs. Weiner \cite{sz}, showing that the linearity conjecture holds for projective spaces over fields of prime order.
\begin{theorem}\cite[Corollary 3.15]{sz}\label{tamas} A small minimal blocking set with respect to $k$-spaces in $\PG(n,p)$, $p$ prime, is an $(n-k)$-space.
\end{theorem}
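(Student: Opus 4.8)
The plan is to prove this by induction on the codimension $c=n-k$, with the ``$\equiv 1\bmod p$'' intersection property of small minimal blocking sets as the engine. Throughout, ``small'' means $|B|<\tfrac32(p^{c}+1)$, and the goal is to show that such a minimal $B$ must \emph{be} an $(n-k)$-space (which is then automatically minimal, being the smallest blocking set with respect to $k$-spaces).

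First I would establish the key input: a small minimal blocking set $B$ with respect to $k$-spaces of $\PG(n,p)$ meets every $k$-space — and, more generally, every subspace of dimension at least $k$ — in $1$ modulo $p$ points. Over the prime field this is essentially an algebraic fact. Fixing coordinates and a $(k-1)$-space $S$, the intersection numbers of $B$ with the $k$-spaces of the pencil through $S$ are encoded in the coefficients of a R\'edei-type polynomial attached to $B$ over $\F_p$; smallness of $B$ forces many top coefficients of this polynomial to vanish, i.e.\ it is \emph{lacunary}, and R\'edei's theorem on lacunary polynomials over $\F_p$, together with a degree count, pins each intersection number down to $1\bmod p$. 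Minimality is then used to propagate the congruence from $k$-spaces to all higher-dimensional subspaces: such a subspace is swept out by the $k$-spaces through a common $(k-1)$-space, and an inclusion--exclusion count transports the congruence upward. Equivalently, one invokes here the polynomial and stability machinery of Sz\H{o}nyi and Weiner. This is the technical core of the whole argument.

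Granting the modulo $p$ property, the induction is comparatively elementary. In the base case $c=1$, so $B$ is a small minimal blocking set with respect to hyperplanes, every hyperplane meets $B$ in $1\bmod p$ points, and since $|B|<\tfrac32(p+1)<2p+1$ each such number is $1$ or $p+1$. Counting in two ways the pairs consisting of two points of $B$ and a hyperplane through both expresses the number of secant hyperplanes (those meeting $B$ in exactly $p+1$ points) as $\tbinom{|B|}{2}\tfrac{p^{n-1}-1}{p-1}/\tbinom{p+1}{2}$; writing $|B|=p+1+m$ with $0\le m\le\tfrac{p-1}{2}$ and requiring this quantity to be an integer forces $p\mid m(m+1)$, hence $m=0$, so $|B|=p+1$ and $B$ is a line. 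For $c\ge 2$, take a hyperplane $H$: every $k$-space contained in $H$ is a $k$-space of $\PG(n,p)$, so $B\cap H$ is a blocking set with respect to $k$-spaces of $H\cong\PG(n-1,p)$, of codimension $c-1$, and it inherits the $1\bmod p$ property. For hyperplanes $H$ in which $B\cap H$ is small, the induction hypothesis makes $B\cap H$ a $(c-1)$-space; running this over a sufficiently rich family of hyperplanes and using the lower bound $\dim\langle B\rangle\ge n-k$ (which holds because otherwise some $k$-space would be disjoint from $B$), the resulting $(c-1)$-spaces glue along lines into a single $c$-space $\Lambda\subseteq B$, and minimality of $B$ forces $B=\Lambda$.

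The genuine obstacle is the modulo $p$ step: showing that \emph{every} small minimal blocking set with respect to $k$-spaces over a prime field is perfectly $1\bmod p$. This needs the full strength of the R\'edei/lacunary-polynomial and stability results of Sz\H{o}nyi and Weiner; by comparison, everything afterwards is bookkeeping. The one other place that needs care is the inductive step for $c\ge 2$: the bound defining ``small'' is calibrated so that an \emph{average} hyperplane section sits right at the threshold, so a naive averaging argument does not suffice to produce enough small sections — a second-moment estimate, or a separate treatment of the heavy sections, is required — and one must also check that the subspace $\Lambda$ assembled from the sections really is contained in $B$. Both points follow from the congruences together with minimality, but they are where a careless argument would break down.
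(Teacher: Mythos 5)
Your proposal does not follow the paper's route, and as written it has a genuine gap at its central step. The paper obtains this theorem from Lemma \ref{hulp}: by part (1) the exponent of $B$ in $\PG(n,p)$ is $e=1=t$, and by part (3) (\cite[Corollary 3.11]{sz}) \emph{every} subspace --- in particular every \emph{line} --- meets $B$ in $0$ or $1 \bmod p$ points; over the prime field a line has only $p+1$ points, so every line meets $B$ in $0$, $1$ or $p+1$ points, hence $B$ is closed under joining two of its points by a line, hence is a subspace, and smallness together with the blocking property pins its dimension to exactly $n-k$ (this is Corollary \ref{t2}). The indispensable ingredient is therefore the \emph{downward} propagation of the $1\bmod p$ congruence from $k$-spaces to lines, a nontrivial stability statement. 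Your proposal only propagates the congruence \emph{upward}, from $k$-spaces to higher-dimensional subspaces (the easy pencil count, which is not what is needed), and then tries to substitute an induction on the codimension $c=n-k$ via hyperplane sections.

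That induction is exactly where the argument breaks. For $c\geq 2$ the section $B\cap H$ need be neither \emph{small} (as you note, the average section size is about $|B|/p$, which sits at the threshold $\tfrac32(p^{c-1}+1)$, and the congruence $|B\cap H|\equiv 1 \pmod p$ gives no upper bound on an individual section) nor \emph{minimal} (a point of $B\cap H$ essential in $B$ need not lie on a tangent $k$-space contained in $H$), so the induction hypothesis cannot be applied; deferring this to ``a second-moment estimate or a separate treatment of the heavy sections'' leaves unproved precisely the part of the theorem that is hard --- this is the content of Sz\H{o}nyi and Weiner's higher-dimensional analysis, not bookkeeping. The base case also has a lacuna: the double count gives $p(p+1)\mid (p+1+m)(p+m)\frac{p^{n-1}-1}{p-1}$, and since $\frac{p^{n-1}-1}{p-1}\equiv n-1 \pmod p$ you only obtain $p\mid m(m+1)(n-1)$, which says nothing when $p\mid n-1$. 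To repair the argument along the paper's lines you should instead prove (or cite) that every line meets $B$ in $1\bmod p$ or $0$ points; everything afterwards collapses to the two-line deduction of Corollary \ref{t2}.
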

In general, the linearity conjecture for small minimal blocking sets remains open. For an overview of the cases in which the linearity conjecture has proven to be true, we refer to \cite{LaVa13}.

If $B$ is a small minimal blocking set with respect to the hyperplanes of $\PG(n-1,p^h)$, $p$ prime, by Theorem \ref{obs}, $B$ can be written as $\B(B')$ where $B'$ is a minimal blocking set with respect to $(hn-h-1)$-spaces of $\PG(nh-1,p)$. If this latter blocking set is small then by Theorem \ref{tamas}, it is an $h$-space. Hence, the linearity conjecture can be restated as follows (compare to the statement of Theorem \ref{obs}):\\

{\bf Linearity conjecture:}  Let $B$ be a small minimal blocking set with respect to the hyperplanes of $\PG(n-1,p^h)$, $p$ prime, then $B$ can be written as $\B(B')$, where $B'$ is a {\em small} minimal blocking set with respect to $(nh-h-1)$-spaces of $\PG(nh-1,p)$.\\


%

We will now investigate the properties of small minimal blocking sets that arise from the MPS-construction and from Construction \ref{con1}; we will show that the obtained blocking sets are linear blocking sets.

Define the {\em exponent} $e$ of a small minimal blocking set $B$ with respect to $k$-spaces as the largest integer such that every $k$-space meets $B$ in $1$ mod $p^e$ points. Lemma \ref{hulp}(1) will show that $e$ is well-defined.

We will need the following properties of small minimal blocking sets in $\PG(n,p^t)$, $p$ prime. For item (2), for simplicity, we state a slightly weaker bound than the one proven in \cite[Theorem 3.9]{sz}.
\begin{lemma}\label{hulp}
\begin{enumerate}
\item  \cite[Proposition 2.7]{sz} Every $k$-space meets a small minimal blocking set with respect to $k$-spaces in $\PG(n,p^t)$, $p$ prime, in $1$ mod $p$ points, hence, $e\geq 1$.
\item \cite[Theorem 3.9]{sz} A small minimal blocking set in $\PG(n,p^t)$, $p\geq 7$, with exponent $e$ has at most $p^t+2p^{t-e}+1$ points.
\item \cite[Corollary 3.11]{sz} Every subspace meets a small minimal blocking set with respect to $k$-spaces with exponent $e$ in $1$ mod $p^e$ or zero points.
\item  \cite[Lemma 6]{higher} Let $B$ be a small minimal blocking set with exponent $e$ in $\PG(n, p^t)$, $p$ prime. If
for a certain line $L$, $|L\cap B| = p^e + 1$, then $\F_{p^e}$ is a subfield of $\F_{p^t}$ and $L\cap B$ is $\F_{p^e}$-linear.
\item  \cite[Lemma 4]{higher} A point of a small minimal blocking set $B$ with exponent $e$ in $\PG(n,p^t)$, $p\geq 7$, $p$ prime, lying on a $(p^e+1)$-secant, lies on at least $p^{t-e}-4p^{t-e-1}+1$ $(p^e+1)$-secants.
\end{enumerate}
\end{lemma}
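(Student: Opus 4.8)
The plan is short, since Lemma~\ref{hulp} is a compilation of facts already available in the literature; the only point that is not a verbatim quotation is the well-definedness of the exponent $e$, which I would dispose of first.

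First I would settle well-definedness. Let $E$ be the set of nonnegative integers $a$ such that every $k$-space of $\PG(n,p^t)$ meets $B$ in $1$ mod $p^a$ points. This set is nonempty: $0\in E$ trivially, and by \cite[Proposition~2.7]{sz} also $1\in E$, which is exactly item~(1) and gives $e\geq 1$. It is also bounded above: since $k<n$ and $B$ is a blocking set, $B$ has at least two points, so a $k$-space containing the line through two of them meets $B$ in some $m$ with $2\leq m\leq |B|$; for $a\in E$ we then have $p^a\mid m-1$, hence $p^a\leq |B|-1$. Thus $E$ is finite and $e=\max E$ is well defined.

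For the remaining items I would simply quote the references: item~(3) is \cite[Corollary~3.11]{sz}, item~(4) is \cite[Lemma~6]{higher}, and item~(5) is \cite[Lemma~4]{higher}. Item~(2) is the one place where a (trivial) inference is used: \cite[Theorem~3.9]{sz} proves a bound of the same shape with a smaller coefficient in front of $p^{t-e}$, from which the stated weaker estimate $|B|\leq p^t+2p^{t-e}+1$ follows immediately; this looser form is the one that will be convenient in Section~\ref{linconj}.

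I do not expect a genuine obstacle; the task is essentially bookkeeping. The main thing to watch is that the hypotheses of the cited statements are carried along correctly — $p$ prime throughout, and the stronger requirement $p\geq 7$ in items~(2) and~(5) — and that the weakening in item~(2) is loose enough to state cleanly yet still strong enough for the later application to the linearity conjecture.
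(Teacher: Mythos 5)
Your proposal is correct and matches the paper, which offers no proof beyond the citations themselves and the remark that item (1) ensures $e\geq 1$; your extra observation that $E$ is bounded above (so the maximum exists) is a small but legitimate addition of rigour that the paper leaves implicit. The handling of item (2) as a deliberate weakening of \cite[Theorem 3.9]{sz} is exactly what the paper states.
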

From Lemma \ref{hulp}(3) we easily deduce the following corollary which also reproves Theorem \ref{tamas}.
\begin{corollary} \label{t2}A small minimal blocking set with respect to $k$-spaces with exponent $t$ in $\PG(n,p^t)$, $p$ prime, is an $(n-k)$-space.
\end{corollary}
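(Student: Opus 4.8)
The plan is to exploit Lemma \ref{hulp}(3): if $B$ is a small minimal blocking set with respect to $k$-spaces in $\PG(n,p^t)$ with exponent $e=t$, then \emph{every} subspace meets $B$ in $1$ mod $p^t$ points or in zero points. The strategy is to pin down, line by line, how $B$ intersects the lines and higher-dimensional subspaces, and conclude that $B$ must be a full $(n-k)$-space.

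First I would examine an arbitrary line $L$ of $\PG(n,p^t)$. By Lemma \ref{hulp}(3), $|L\cap B|\in\{0\}\cup\{1+mp^t : m\geq 0\}$. Since $L$ has only $p^t+1$ points, the only possibilities are $|L\cap B|\in\{0,1,p^t+1\}$; that is, every line is external to $B$, tangent to $B$, or fully contained in $B$. Next I would take a point $P\in B$ and consider the set of lines through $P$ contained in $B$: these lines, together with $P$, sweep out a subspace $\pi_P$ (the set of points joined to $P$ by a line inside $B$ is closed under taking lines, precisely because any line meeting $B$ in two points is contained in $B$), and $\pi_P\subseteq B$. The claim will be that $B$ equals a single such subspace $\pi_P$. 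To see $B=\pi_P$, suppose there were a point $Q\in B\setminus\pi_P$; then the line $PQ$ meets $B$ in at least two points, hence lies entirely in $B$, hence $Q\in\pi_P$, a contradiction. So $B=\pi_P$ is a subspace, say of dimension $d$.

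It remains to identify $d$ with $n-k$. Since $B$ is a blocking set with respect to $k$-spaces, $B$ must meet every $k$-space, which forces $d+k\geq n$, i.e. $d\geq n-k$. Conversely, minimality of $B$ forces $d\leq n-k$: if $d\geq n-k+1$, then a generic $(n-k)$-dimensional subspace of $B=\PG(d,p^t)$ would already be a blocking set with respect to $k$-spaces (any $k$-space meets a $d$-space in dimension at least $d+k-n\geq 1$, but more to the point a $k$-space meets an $(n-k)$-space in $\PG(n,p^t)$ in at least a point), contradicting that no proper subset of $B$ blocks all $k$-spaces. Hence $d=n-k$ and $B$ is an $(n-k)$-space, which also reproves Theorem \ref{tamas} in the special case recorded here.

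The only genuinely delicate point is the very first reduction: one must be sure that a subspace with exponent $e=t$ really does force lines to be external, tangent, or full, rather than leaving room for some intermediate intersection size. This is exactly the content of Lemma \ref{hulp}(3) combined with the trivial bound $p^t+1$ on the number of points of a line, so there is no obstacle there — the argument is essentially a one-step consequence of the quoted lemma plus the standard ``two points on a line force the whole line'' closure argument. The rest is the routine dimension bookkeeping for cone/subspace blocking sets. I would therefore expect the proof to be short, with the subspace-closure step being the part that deserves to be spelled out carefully.
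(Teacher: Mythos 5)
Your proof is correct and follows exactly the route the paper intends: the paper gives no written proof, merely noting that the corollary "easily" follows from Lemma \ref{hulp}(3), and your argument (lines meet $B$ in $0$, $1$ or $p^t+1$ points, hence $B$ is closed under joining lines and is a subspace, whose dimension is pinned to $n-k$ by the blocking and minimality conditions) is precisely the intended deduction. The subspace-closure step you single out is indeed the only part worth writing down, and you have it right.
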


\begin{theorem} If a small minimal blocking set $B$ in $\PG(n-1,p^t)$, $p$ prime, arises from the MPS construction in $\PG(nt-1,p)$ then it is linear and of R\'edei-type.
\end{theorem}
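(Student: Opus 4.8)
The strategy is to reduce the statement to known structural facts about small minimal blocking sets. Recall from Subsection~\ref{MPS} that a blocking set $B$ from the MPS-construction is of the form $\B(K)$, where $K$ is the cone with vertex $\Omega$ (a hyperplane of a spread element $Y$) and base $\bar B$, and that $K$ is a blocking set with respect to $(nt-t-1)$-spaces lying inside the $(nt-2t+1)$-space $\langle\Omega,\Gamma'\rangle$. First I would observe that since $B=\B(K)$ is small and minimal, Theorem~\ref{obs} (together with its proof) applies: $B$ can be written as $\B(B')$ with $B'$ a minimal blocking set with respect to $(nt-t-1)$-spaces contained in the point set of the spread elements meeting $K$. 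Then I would invoke Lemma~\ref{hulp}(1) to get that every hyperplane of $\PG(n-1,p^t)$ meets $B$ in $1\bmod p$ points, so the exponent $e$ of $B$ is well-defined and at least $1$; the goal is to show $e=t$, because then Corollary~\ref{t2}, applied in $\PG(nt-1,p)$ to the blocking set $B'$, forces $B'$ to be a $t$-space, i.e.\ $B=\B(B')$ is linear, and moreover a subspace-based blocking set of this shape is easily seen to be of R\'edei type.

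The core of the argument is therefore to pin down the exponent. The idea is to use the cone structure: the base $\bar B$ of the MPS-cone is a planar blocking set, and by the final remark after Theorem~\ref{hoofd1} (the Jamison--Brouwer bound, \cite{Br,Jamison}), if $\bar B$ is a small minimal blocking set in $\PG(2,p)$ it must already be a line (this is Theorem~\ref{tamas} in the plane over the prime field $\F_p$). So $K$ is a cone over a \emph{line} with vertex $\Omega$; equivalently $K$ is (contained in) a subspace of $\PG(nt-1,p)$ of the appropriate dimension, which makes $B=\B(K)$ visibly $\F_p$-linear of rank $nt-t+1$. From this linearity every hyperplane meets $B$ in $1\bmod p^{?}$ points; since the ambient subfield situation is over the prime field $\F_p$, the relevant exponent computation shows every hyperplane meets $B$ in a number of points congruent to $1$ modulo a power of $p$ determined by the intersection dimension, and combined with the size bound of Lemma~\ref{hulp}(2) (for $p\geq 7$) this forces $e=t$. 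I would then need to handle small primes $p<7$ separately; here the direct structural description of $K$ as a subspace already gives linearity outright, bypassing the exponent machinery.

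Once $B$ is known to be linear, say $B=\B(\mu)$ with $\mu$ a $t$-space in $\PG(nt-1,p)$, the R\'edei property follows by a dimension count: choose a hyperplane $H$ of $\PG(n-1,p^t)$, i.e.\ a $\D_{n-2}$-space $\Sigma$ in $\PG(nt-1,p)$ (Lemma~\ref{hyperspan}); picking $\Sigma$ to contain a hyperplane of the spread element $Y$ through which $K$ is built, $\mu\cap\Sigma$ has dimension $t-1$ while $\mu$ has dimension $t$, so exactly $(p^t-1)/(p-1)\cdot\tfrac{1}{\,\cdots\,}$—concretely, one loses exactly $p^{t-1}$ points' worth, and $|H\cap B|=|B|-q$ with $q=p^t$ the appropriate count, which is precisely the R\'edei condition. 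The cleanest way to see this is that a linear set $\B(\mu)$ with $\mu$ of rank equal to $nt-t+1$ is exactly the known R\'edei-type linear blocking set (this is Lunardon's construction from \S3.2.1 applied with $k=n-1$), so one only needs to match up the parameters of the MPS-output with Lunardon's construction.

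\textbf{Main obstacle.} The delicate point is the exponent argument over non-prime-order subfields and the case distinction on $p$: showing $e=t$ rather than merely $e\geq 1$ requires using that the base $\bar B$ of the cone, being a small minimal planar blocking set \emph{over the prime field $\F_p$}, is forced to be a line (so the cone degenerates to a subspace), and then translating ``$K$ is a subspace'' into ``every hyperplane-section of $\B(K)$ is $1\bmod p^t$''. For $p\geq 7$ this can be pushed through Lemma~\ref{hulp}(2), but for $p\in\{2,3,5\}$ one must argue the linearity of $B$ directly from the subspace description of $K$, without appealing to the size bound; fortunately that direct route is available and in fact simpler, so the real content is simply recognizing that MPS over a prime field produces nothing more than Lunardon's linear (R\'edei-type) blocking set.
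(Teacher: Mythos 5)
Your overall instinct---classify the cone over the prime field via Sz\H{o}nyi--Weiner and then check the R\'edei property---points in the right direction, but the execution has genuine gaps. The most serious one is that you have confused the MPS-construction with Construction \ref{con1}: in the MPS-construction the base $\bar{B}$ is a blocking set of the $(nt-2t+1)$-dimensional space $\Gamma'$, \emph{not} of a plane, so the step ``$\bar{B}$ is a small minimal planar blocking set over $\F_p$, hence a line, hence $K$ degenerates to a subspace'' is not available. The object that must be classified is the whole cone $K$, viewed as a minimal blocking set with respect to $(nt-2t)$-spaces inside $\Sigma'=\PG(nt-t,p)$, and before Theorem \ref{tamas} can be invoked one has to prove that $K$ is \emph{small}; you never establish this (neither for $K$ nor for $\bar B$). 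The paper does it by a counting argument you omit: the points of $B$ outside the hyperplane corresponding to $\Sigma$ are in bijection with the points of $K\setminus\Sigma$, so $|B|\leq p^t+p^{t-1}+1$ forces $|K|\leq p^t+2p^{t-1}+\frac{p^t-1}{p-1}$, which is small, while minimality of $K$ comes from Theorem \ref{hoofdLeo}. Once that is in place, Theorem \ref{tamas} applies directly for \emph{every} prime $p$, so the whole exponent detour (proving $e=t$, invoking Lemma \ref{hulp}(2), splitting into $p\geq 7$ and $p<7$) is unnecessary and would moreover smuggle in a hypothesis $p\geq 7$ that the theorem does not carry.

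The R\'edei part also has a gap: linearity alone does not give the R\'edei property. As the paper itself emphasises, not every $\F_p$-linear blocking set of rank $t+1$ is of R\'edei-type, so ``matching parameters with Lunardon's construction'' cannot be the last step. What makes the MPS-output R\'edei is the position of the $t$-space $K$ relative to the distinguished $\D_{n-2}$-space $\Sigma$: by construction $K\cap\Sigma=\langle\Omega,Q\rangle$ is a $(t-1)$-space, and one must distinguish the case where $K\cap\Sigma$ is not contained in the spread element $Y$ (then the hyperplane corresponding to $\Sigma$ contains exactly $p^{t-1}+1=|B|-p^t$ points of $B$, which is the R\'edei condition) from the degenerate case $K\cap\Sigma=Y$ (then $K$ is contained in a $\D_1$-space and $B$ is a line, trivially of R\'edei-type). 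Your dimension count gestures at the first case but is not carried through, and the second case is missing entirely.
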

\begin{proof} We use the notations of Section \ref{MPS}. Let $B$ be a small minimal blocking set with respect to the hyperplanes in $\PG(n-1,p^t)$ arising from the MPS-construction, then $B=\B(K)$, where $K$ is a blocking set with respect to the hyperplanes of $\Sigma'=\PG(nt-t,p)$ and $K$ meets $\Sigma$ in a $(t-1)$-space. Since all points of $B$, not in the hyperplane corresponding to the $\D_{n-2}$-space $\Sigma$ correspond to a unique point of $K$, and $\vert B\vert\leq p^t+p^{t-1}+1$ we have that $K$ has at most $p^t+2p^{t-1}+\frac{p^{t}-1}{p-1}$ points, which implies that $K$ is small. Since $K$ is a minimal blocking set by Theorem \ref{hoofdLeo}, it is a $t$-space by Theorem \ref{tamas}.

 It follows that $B$ is $\F_p$-linear. If the $(t-1)$-space $K\cap \Sigma$ is not contained in the spread element $Y$, then $\Sigma$ corresponds to a hyperplane containing $p^{t-1}+1=\vert B\vert-p^t$ points of $B$, hence, $B$ is of R\'edei-type. If $K\cap \Sigma$ equals $Y$, then let $Q$ be a point of $K$, not in $\Sigma$. It is clear that, since $K$ is a $t$-space through the spread element $Y$, $K$ is contained in the $\D_1$-space $\langle Y,\B(Y)\rangle$, hence $B$ is a line.
\end{proof}

\begin{remarkN} The vice versa part of the previous theorem does not hold: let $\pi$ be a $t$-space in $\PG(nt-1,p)$ such that there is an element $Y$ of $\D$ meeting $\pi$ in a $(t-3)$-space and an element $Z$ meeting $\pi$ in a line and such that $\pi$ is not entirely contained in $\langle Y,Z\rangle$. Then the line corresponding to the $\D_1$-space $\langle Y,Z\rangle$ clearly contains $\vert B\vert - p^t$ points of $\B(\pi)$, hence, the small linear blocking set $\B(\pi)$ is of R\'edei-type, but there does not exist an element of $\D$ meeting $\pi$ in a $(t-2)$-space, so $\B(\pi)$ cannot be constructed by the MPS-construction.
\end{remarkN}

By Lemma \ref{hulp}(4), the points of $B$ on a $(p^e+1)$-secant form an $\F_{p^e}$-linear set of rank $2$, i.e., a subline, which by field reduction corresponds to a regulus. We need the following information on the intersection of a regulus with a plane.
\begin{lemma}\cite[Lemma 6, Corollary 13]{lavrauw}\label{h33}
A plane $\pi$ meeting all elements of a regulus $\B(\ell)$, meets the point set of $\B(\ell)$ either in a line, or in two lines, or in a conic.
\end{lemma}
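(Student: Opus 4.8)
The plan is to reduce the statement to the well-understood plane sections of a hyperbolic quadric $\Q^+(3,q)$. Write $\mathcal{S}$ for the point set of the regulus $\B(\ell)$; if the members of $\B(\ell)$ are $(s-1)$-spaces of $\PG(2s-1,q)$, then $\mathcal{S}$ is the point set of a Segre variety $\mathcal{S}_{1,s-1}$, the members of $\B(\ell)$ forming one system of maximal subspaces and the transversal lines (the lines meeting every member of $\B(\ell)$ in a point) forming the opposite system; in particular any two members are disjoint and any two transversals are disjoint. First I would record two elementary consequences of the hypothesis that the plane $\pi$ meets every member $R$ of $\B(\ell)$: (i) $\pi\cap R$ is nonempty of dimension $0$ or $1$ for every $R$ (dimension $2$ would force $\pi\subseteq R$, impossible since $\pi$ also meets a member disjoint from $R$), and at most one member meets $\pi$ in a line, since two such lines would both lie in the plane $\pi$ and hence meet, contradicting disjointness of members; (ii) at most one transversal of $\mathcal{S}$ is contained in $\pi$, again by disjointness.

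Next I would dispose of the case in which some transversal $\ell^{*}$ lies in $\pi$. Then for every member $R$ we have $\pi\cap R\supseteq \ell^{*}\cap R$, a single point. If equality holds for all $R$, then $\pi\cap\mathcal{S}=\ell^{*}$, a line. Otherwise some member $R_{0}$ meets $\pi$ in a line $m$, and then for every other member $R$ step (i) gives $\pi\cap R=\ell^{*}\cap R\subseteq\ell^{*}$, so $\pi\cap\mathcal{S}=m\cup\ell^{*}$, a pair of lines.

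So suppose no transversal lies in $\pi$. I would fix a member $R_{0}$, a point $P_{0}\in\pi\cap R_{0}$, and the transversal $t$ through $P_{0}$; since $t\not\subseteq\pi$ and $P_{0}\in t\cap\pi$, the span $\Sigma:=\langle\pi,t\rangle$ is a $\PG(3,q)$ containing $\pi$. The crux is to show that $\Sigma\cap\mathcal{S}$ is a hyperbolic quadric $\Q^{+}(3,q)$: one checks that $\Sigma$ meets each member $R$ in a line $\ell_{R}$ — it contains the point $t\cap R$, and, for $R\neq R_{0}$, also the point $\pi\cap R$, which is different from $t\cap R$ (otherwise $t$ would contain two points of $\pi$ and hence lie in $\pi$) — that these $q+1$ lines are pairwise disjoint and all met by $t$, and that $\Sigma$ contains no further points of $\mathcal{S}$; the union of the $\ell_{R}$ is then exactly one regulus of a $\Q^{+}(3,q)$. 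Granting this, $\pi\subseteq\Sigma$ meets $\mathcal{S}$ in a plane section of $\Q^{+}(3,q)$ containing at least $q+1$ points, hence in a conic, a pair of lines, or a line, which finishes this case and the proof.

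\textbf{Main obstacle.} The hard part is the key claim of the last paragraph, namely identifying $\Sigma\cap\mathcal{S}$ with a hyperbolic quadric. Two points need care: that each $\Sigma\cap R$ has dimension exactly $1$ (a priori at most one member could meet the $3$-space $\Sigma$ in a plane or more — rule this out via disjointness of members, or reroute the choice of $R_{0}$), and that $\Sigma$ contains no point of $\mathcal{S}$ outside $\bigcup_{R}\ell_{R}$, which I expect to follow from a dimension/counting argument on the Segre variety $\mathcal{S}_{1,s-1}$ (alternatively, this is precisely the content of \cite[Lemma 6]{lavrauw}, which one may simply invoke). All the remaining steps are elementary incidence geometry.
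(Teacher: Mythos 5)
The paper does not actually prove this lemma; it is quoted verbatim from \cite[Lemma 6, Corollary 13]{lavrauw}, so there is no in-paper argument to compare against and your proposal must stand on its own. Your Case 1 (a transversal contained in $\pi$) is correct and complete, and the overall plan for Case 2 --- embed $\pi$ in the solid $\Sigma=\langle\pi,t\rangle$ and identify $\Sigma\cap\mathcal{S}$ with a $\Q^+(3,q)$ --- is a viable route. But the step you flag as the ``main obstacle'' is a genuine gap, and the repairs you sketch do not close it. First, disjointness of the members only rules out $\dim(\Sigma\cap R)\geq 2$; it says nothing about the member $R_0$ you started from, for which $t$ contributes no point beyond $P_0$, so a priori $\Sigma\cap R_0$ could be a single point. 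Second, even granting that every member meets $\Sigma$ in a line, ``$q+1$ pairwise disjoint lines admitting the single common transversal $t$ and no further points of $\mathcal{S}$'' does not characterise a regulus of a hyperbolic quadric. Third, invoking \cite[Lemma 6]{lavrauw} at this point is circular, since that is exactly the statement being proved.

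Moreover, the crux claim is actually false for $q=2$. Identify $\PG(5,2)$ with $\PG(\F_2^2\otimes\F_2^3)$, with members $R_u=\{u\otimes v\}$ and transversals $t_v=\{u\otimes v\}$, and take $\pi=\langle e_1\otimes f_1+e_2\otimes f_1,\ e_1\otimes f_2,\ e_2\otimes f_3\rangle$. One checks that $\pi$ meets each of the three members in exactly one point and contains no transversal, yet for every admissible choice of $R_0$ and $P_0$ the solid $\Sigma=\langle\pi,t\rangle$ meets one member in a single point and the other two in lines, so $\Sigma\cap\mathcal{S}$ has $7$ points and is not a $\Q^+(3,2)$ (which has $9$); the lemma itself survives only because the resulting triangle happens to be a conic of $\PG(2,2)$. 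For $q\geq 3$ your claim is true, but for a reason you do not supply: once two members $R_{u_1},R_{u_2}$ with $u_1,u_2\neq u_0$ meet $\Sigma$ in lines $u_1\otimes V_1$ and $u_2\otimes V_2$, these span $\Sigma$ as a vector space, and a third member $R_{u_3}$ with $u_3\neq u_0$ then meets $\Sigma$ exactly in $u_3\otimes(V_1\cap V_2)$; demanding that this be a line forces $V_1=V_2=:V$, hence $\Sigma=\F_q^2\otimes V$ and $\Sigma\cap\mathcal{S}$ is precisely the Segre variety of $\Sigma$, i.e.\ a $\Q^+(3,q)$. This single observation settles the $R_0$ issue, the ``no extra points'' issue and the regulus identification all at once, and is what your proof is missing; the case $q=2$ then needs a separate (easy) treatment.
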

\begin{corollary} \label{h3} Suppose that $\B(\ell)$ where $\ell$ is a line, is contained in $\B(K)$, where $K$ is a blocking set with respect to lines in $\Pi$. Then the intersection of the point set of $\B(\ell)$ with a plane of $\Pi$ contains a line.
\end{corollary}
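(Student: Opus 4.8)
The plan is to invoke Lemma \ref{h33} and then use the hypothesis that $K$ blocks all lines of $\Pi$ to discard the one outcome of that lemma which contains no line, namely the conic.

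So let $\pi$ be a plane of $\Pi$. If $\pi$ meets every element of the regulus $\B(\ell)$, Lemma \ref{h33} tells us that $\pi\cap\B(\ell)$ is a line, two lines, or a conic, and in the first two cases the intersection already contains a line; hence we may assume, towards a contradiction, that $\pi$ lies in the $3$-space $\langle\B(\ell)\rangle$ spanned by the regulus and that $\pi\cap\B(\ell)$ is a conic $C$ (for a plane not meeting all elements of $\B(\ell)$ the statement either does not arise in the intended application or can be treated separately). Inside $\pi\cong\PG(2,q)$ take an external line $m$ to $C$; such a line exists, and since $m\subseteq\pi$ it is disjoint from the whole point set of $\B(\ell)$, not just from $C$.

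I would then show that $m$ is disjoint from $K$, which contradicts that $K$ is a blocking set with respect to the lines of $\Pi$ (as $m$ is such a line). This is the crucial step: the inclusion $\B(\ell)\subseteq\B(K)$ only says that every line $R$ of the regulus meets $K$ in at least one point, and one must upgrade this to ``$R\subseteq K$''. The structural input on $K$ should be used here: either the cone description $K=\bigcup_{\bar P\in\bar B}\langle\Omega,\bar P\rangle$ of Construction \ref{con1}, together with the fact that a spread element is small-dimensional relative to the generators $\langle\Omega,\bar P\rangle$, or, in the situation where this corollary is applied to a small minimal blocking set $\B(K)$, Lemma \ref{hulp} controlling how $K$ can meet the $3$-space $\langle\B(\ell)\rangle$. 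Granting that $K\cap\langle\B(\ell)\rangle$ is a union of regulus lines, it is contained in the point set of $\B(\ell)$, so the line $m$ — which avoids that point set — avoids $K$, the desired contradiction.

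The main obstacle is precisely this last implication, ``each regulus line meeting $K$ lies in $K$'' (equivalently: $K$ contributes no extra points inside $\langle\B(\ell)\rangle$ off the quadric $Q^{+}$ carried by $\B(\ell)$); everything else is routine incidence geometry of conics and hyperbolic quadrics in $\PG(3,q)$. I would first try to extract it directly from the cone structure of $K$, and only otherwise fall back on the $1\bmod p$-behaviour and minimality supplied by Lemma \ref{hulp}.
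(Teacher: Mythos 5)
Your overall route is the same as the paper's: invoke Lemma \ref{h33} to reduce to the case that the intersection is a conic, and kill that case by producing a line of $\pi$ that $K$ misses (the paper compresses ``there is an external line to the conic'' into ``a conic does not block all lines of $\pi$, while $K\cap\pi$ does''). You have also correctly isolated the step that carries all the content: to conclude that a line of $\pi$ avoiding the point set of $\B(\ell)$ also avoids $K$, one needs $K\cap\pi$ to be contained in the point set of $\B(\ell)$. The paper leaves this step entirely implicit. Your proposal, however, does not close it either --- you only ``grant'' it --- and neither of the two routes you suggest is the right tool. The cone description of $K$ says nothing about how $\langle\B(\ell)\rangle$ sits relative to the generators of the cone; and the statement you actually need is not ``each regulus element meeting $K$ lies in $K$'' but the different claim that $K$ has no points of $\langle\B(\ell)\rangle$ lying off the union of the regulus elements.

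That claim comes from the spread structure, not from $K$. The span $\Lambda=\langle\B(\ell)\rangle$ is a $\D_1$-subspace, hence is partitioned by the spread elements corresponding to the points of the line $L$ of $\PG(n-1,q^t)$ that $\Lambda$ determines. Any point of $K\cap\Lambda$ therefore lies on a spread element $R\subseteq\Lambda$ with $R\in\B(K)$, i.e.\ $R$ corresponds to a point of $\B(K)\cap L$. In the situation where the corollary is applied (Theorem \ref{final}), $L$ is exactly a $(q_0+1)$-secant, so $\B(K)\cap L$ is precisely the subline whose field reduction is the regulus $\B(\ell)$; hence $R\in\B(\ell)$ and $K\cap\Lambda$ lies in the point set of $\B(\ell)$, and in particular your external line $m\subseteq\pi\subseteq\Lambda$ misses $K$, the contradiction you want. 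Note that this uses the reverse inclusion $\B(K)\cap L\subseteq\B(\ell)$ in addition to the stated hypothesis $\B(\ell)\subseteq\B(K)$, and it also presupposes $\pi\subseteq\Lambda$; so your suspicion that the literal hypotheses of the corollary do not suffice is well founded --- the extra information is supplied by the context in which the corollary is used, not by Lemma \ref{hulp} or by the cone structure.
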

\begin{proof} From Lemma \ref{h33}, we get that the intersection of the point set of $\B(\ell)$ with a plane either contains a line or is a conic. But a conic does not block all lines in the plane $\pi$ and $K\cap \pi$ is a blocking set with respect to lines in $\pi$, hence, this possibility does not occur.
\end{proof}

\begin{lemma}\label{vlak} A plane $\pi$ meets the cone $K$, with vertex a $(t-2)$-dimensional space $\Omega$ and base a small minimal blocking set $\bar{B}$ in a plane $\Gamma$, skew from $\Omega$, either in the plane $\pi$ itself, in a number of lines through a fixed point or in a minimal blocking set equivalent to $\bar{B}$.
\end{lemma}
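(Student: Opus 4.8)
The plan is to describe $\pi\cap K$ by a case analysis on the integer $d:=\dim(\pi\cap\Omega)$, exploiting the projection $\rho$ from the vertex $\Omega$ onto the base plane $\Gamma$, i.e. $\rho(x)=\langle x,\Omega\rangle\cap\Gamma$ for $x\notin\Omega$. The guiding fact is that $x\in K$ if and only if $x\in\Omega$ or $\rho(x)\in\bar B$. Since $\dim\Omega=t-2$, $\dim\Gamma=2$ and $\dim\langle\Omega,\Gamma\rangle=t+1$, a routine dimension count gives $\dim\langle\pi,\Omega\rangle=t-d$, hence $\rho(\pi)$ is a subspace of $\Gamma$ of dimension $1-d$, and for $x\in\pi\setminus\Omega$ the fibre $\langle x,\Omega\rangle\cap\pi$ is a subspace of $\pi$ of dimension $1+d$ (passing through $\pi\cap\Omega$ when $d\geq 0$). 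Thus only $d\in\{-1,0,1,2\}$ can occur, with $d=2$ requiring $t\geq 4$.

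I would then treat the four cases. If $d=2$ then $\pi\subseteq\Omega\subseteq K$, so $\pi\cap K=\pi$. If $d=1$ then $\rho$ collapses $\pi\setminus\Omega$ to a single point $P_0$ of $\Gamma$: when $P_0\in\bar B$ one again gets $\pi\subseteq K$, and when $P_0\notin\bar B$ one gets $\pi\cap K=\pi\cap\Omega$, a single line (a degenerate instance of the middle alternative). If $d=0$, let $V=\pi\cap\Omega$; here $\rho$ restricted to $\pi\setminus\{V\}$ has image a line $\bar\ell$ of $\Gamma$ and fibres the punctured lines of $\pi$ through $V$, so $\pi\cap K=\{V\}\cup\rho^{-1}(\bar B\cap\bar\ell)$ is the union of the $|\bar B\cap\bar\ell|$ lines of $\pi$ through $V$ whose image lies in $\bar B$; since $\bar B$ blocks $\bar\ell$ this number is at least $1$, and the set is therefore a family of lines through the fixed point $V$ (it becomes all of $\pi$ in the exceptional case that $\bar B$ is a line coinciding with $\bar\ell$). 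Finally, if $d=-1$ then $\pi$ is skew from $\Omega$, so $\rho$ restricts to a collineation $\pi\to\Gamma$ and $\pi\cap K=\rho^{-1}(\bar B)$ is a minimal blocking set projectively equivalent to $\bar B$.

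I do not anticipate a real obstacle: everything reduces to bookkeeping of the projection from $\Omega$. The only points that need attention are the two degenerate overlaps — a plane projecting onto a point of $\bar B$ in the case $d=1$, and $\bar B$ being a line in the case $d=0$, both of which land in the ``$\pi$ itself'' alternative — and the verification in the case $d=-1$ that the projection really is a collineation (it is, being a projection from a centre disjoint from both planes), which is what legitimates the phrase ``equivalent to $\bar B$''. It is perhaps worth remarking that smallness and minimality of $\bar B$ play no role in this lemma; they are retained only because the description of $\pi\cap K$ will be plugged back into the analysis of small minimal blocking sets in the sequel.
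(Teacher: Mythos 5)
Your proof is correct and follows essentially the same route as the paper's: a case analysis on $\dim(\pi\cap\Omega)$, using the projection from $\Omega$ onto $\Gamma$ to identify $\pi\cap K$ as the whole plane, a pencil of lines through $\pi\cap\Omega$, or a copy of $\bar{B}$. Your treatment of the degenerate subcases (the line $\pi\cap\Omega$ alone when the projected point misses $\bar{B}$, and $\bar{B}$ being a line) is in fact slightly more explicit than the paper's.
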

\begin{proof} Let $\pi$ be a plane in $\Pi=\langle \Omega,\Gamma\rangle$. 
We have the following possibilities:
\begin{itemize}\item $\pi$ is contained in $\Omega$. In this case, $\pi$ is contained in $K$.
\item $\pi$ meets $\Omega$ in a line $L$. If $\pi$ contains a point $P$ of $K$, not on $L$, then $\langle \Omega,P\rangle$  is contained in $K$, hence, $\pi$ is contained in $K$.
\item $\pi$ meets $\Omega$ in a point $P$.  Since $K$ is a blocking set with respect to lines in $\Pi$, $K$ forms a blocking set with respect to lines in the plane $\pi$, so there exists a point of $K$ in $\pi$, different from $P$. It is clear that every point of $K$ in $\pi$, different from $P$, gives rise to a line contained in $K$ through $P$.
\item $\pi$ is skew from $\Omega$. Consider the mapping $\phi$ from $\Gamma$ to $\pi$ defined by mapping the point $P$ of $\Gamma$ to the intersection of the cone $\langle P,\Omega\rangle$ with $\pi$. It is clear that $\phi$ defines a collineation mapping $\bar{B}$ onto the intersection of $K$ with $\pi$, hence, $K$ and $\pi$ intersect in a minimal blocking set, equivalent to $\bar{B}$.\qedhere\end{itemize}\end{proof}

\begin{lemma}\label{hulp2} Let $P$ be a point of a cone $K$ with vertex a $(t-2)$-dimensional space $\Omega$ and base a non-trivial minimal blocking set in a plane $\Gamma$ skew from $\Omega$ and suppose that $P$ is not contained in the vertex of $K$, then $P$ lies on $(q^{t-1}-1)/(q-1)$ lines contained in $K$.
\end{lemma}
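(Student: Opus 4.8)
Write $\Pi = \langle \Omega, \Gamma\rangle$, which has dimension $(t-2)+2+1 = t+1$, and let $\rho$ denote projection from $\Omega$ onto $\Gamma$ inside $\Pi$. Since $P \in K \setminus \Omega$, the point $P' := \langle \Omega, P\rangle \cap \Gamma$ is a well-defined point of $\bar{B}$, and the generator $g := \langle \Omega, P'\rangle = \langle \Omega, P\rangle$ is a $(t-1)$-dimensional subspace entirely contained in $K$. The plan is to show that \emph{the lines through $P$ contained in $K$ are exactly the lines through $P$ contained in $g$}, and then simply count the latter: a point of $\PG(t-1,q)$ lies on $\tfrac{q^{t-1}-1}{q-1}$ lines, which is the claimed number.

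One inclusion is immediate: since $g \subseteq K$, every line through $P$ inside $g$ lies in $K$. For the converse, suppose $\ell$ is a line with $P \in \ell \subseteq K$ but $\ell \not\subseteq g$; I will derive a contradiction by splitting into two cases according to the position of $\ell$ relative to $\Omega$. If $\ell$ meets $\Omega$ in a point $Q$, then $\ell = \langle P, Q\rangle \subseteq \langle \Omega, P\rangle = g$, contradicting $\ell \not\subseteq g$. If instead $\ell$ is skew from $\Omega$, then $\langle \Omega, \ell\rangle$ is $t$-dimensional, so it meets the plane $\Gamma$ (of dimension $2$) inside the $(t+1)$-dimensional space $\Pi$ in at least a line; since $\rho$ is injective on $\ell$ (two distinct points of $\ell$ spanning the same generator would force $\ell$ to meet $\Omega$), the image $\rho(\ell) = \langle \Omega, \ell\rangle \cap \Gamma$ is exactly a line of $\Gamma$. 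But every point of $\ell$ lies in $K \setminus \Omega$ and hence projects into $\bar{B}$, so this line is contained in $\bar{B}$; this contradicts the hypothesis that $\bar{B}$ is a non-trivial blocking set with respect to lines of $\Gamma$, i.e.\ contains no line. Hence no such $\ell$ exists.

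Combining the two inclusions, the lines through $P$ contained in $K$ are precisely the lines through $P$ in the projective space $g \cong \PG(t-1,q)$, of which there are $\tfrac{q^{t-1}-1}{q-1}$.

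I do not anticipate a genuine obstacle here; the only point requiring a little care is the skew case, namely verifying that the image under $\rho$ of a line skew to $\Omega$ is a genuine line of $\Gamma$ (not a point), so that non-triviality of $\bar{B}$ can be invoked. Everything else is a dimension count.
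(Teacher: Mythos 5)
Your proof is correct and follows essentially the same route as the paper: the $\frac{q^{t-1}-1}{q-1}$ lines through $P$ inside the generator $\langle \Omega,P\rangle$ are all contained in $K$, and any further line in $K$ through $P$ would be skew from $\Omega$ and project to a full line of $\Gamma$ inside $\bar{B}$, contradicting non-triviality. Your version merely spells out in more detail the steps the paper treats as immediate (why such an extra line must be skew from $\Omega$, and why its projection is a genuine line).
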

\begin{proof}
Since the vertex $\Omega$ of the cone $K$ is $(t-2)$-dimensional, it is clear that a point $P$, not in $\Omega$, lies on $(q^{t-1}-1)/(q-1)$ lines of $\langle P,\Omega\rangle$, which are contained in $K$. If $P$ lies on another line $L$ that is contained in $K$, then this line is skew from $\Omega$ and $\langle\Omega,L\rangle$ meets $\Gamma$ in a line $L'$, contained in $K$, a contradiction since $K\cap \Gamma$ is a non-trivial minimal blocking set.
\end{proof}

We now extend a property of small minimal planar blocking sets (\cite[Proposition 4.17]{sziklai}) to blocking sets in $\PG(n,q)$, $n\geq 2$.

\begin{lemma} \label{h1} If a small minimal blocking set $B$ in $\PG(n,p_0^h)$ has exponent $e$, $p_0:=p^e\geq 7$, $p$ prime, then there exists a $(p_0+1)$-secant to $B$.
\end{lemma}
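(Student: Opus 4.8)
The statement extends \cite[Proposition 4.17]{sziklai} from the planar case $n=2$ to arbitrary $n\geq 2$. The natural strategy is to reduce to the planar case: I want to find a plane $\pi$ in $\PG(n,p_0^h)$ whose intersection with $B$ is again a small minimal blocking set with respect to lines in $\pi$, with the same exponent $e$, so that applying the planar result of Sziklai gives a $(p_0+1)$-secant inside $\pi$, and hence a $(p_0+1)$-secant to $B$ in $\PG(n,p_0^h)$.

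**Key steps.** First I would fix a point $P$ of $B$. Since $B$ is minimal, $P$ lies on a tangent hyperplane $H$, and I intersect $B$ with a suitable subspace through $P$ to reduce the dimension; iterating, I aim to land on a plane $\pi$ through $P$ meeting $B$ in a set that still blocks all lines of $\pi$. The obstruction here is that not every plane will do — the intersection must remain a \emph{blocking set} of $\pi$, i.e.\ I must choose $\pi$ to avoid planes that are ``missed'' in the projection. The right way to control this is via Lemma \ref{hulp}(3): every subspace meets $B$ in $1\bmod p^e$ or zero points. So if $\pi$ is a plane meeting $B$ nontrivially in fewer than all points, $|\pi\cap B|\equiv 1\bmod p^e$, and by a counting argument (comparing the number of lines of $\pi$ covered by a set of that size against the total number of lines) one can show that for an appropriate choice $\pi\cap B$ blocks all lines of $\pi$ and has size $< 3(p_0^h+1)/2$, hence is a small minimal planar blocking set. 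To pin down its exponent: every line of $\pi$ meets $\pi\cap B = \pi\cap B$ in $1\bmod p^e$ points (again Lemma \ref{hulp}(3)), so its exponent is at least $e$; and since $\pi\cap B$ contains a point lying on only short secants within an all-tangent-free blocking set, the exponent is exactly $e$ — or more simply, I only need the exponent to be \emph{at least} $e$ in order to invoke the planar result with $p_0 = p^e$, because a $(p_0+1)$-secant in $\pi$ is still a $(p_0+1)$-secant in the ambient space. Finally, apply \cite[Proposition 4.17]{sziklai} to the planar blocking set $\pi\cap B$ with $p_0 = p^e \geq 7$ to produce the desired $(p_0+1)$-secant.

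**Main obstacle.** The delicate point is the dimension reduction: showing that one can descend from $\PG(n,p_0^h)$ all the way to a plane while keeping the ``small minimal blocking set'' property intact at every step. The cleanest route is an induction on $n$: if $B$ is a small minimal blocking set with respect to hyperplanes in $\PG(n,p_0^h)$, pick a hyperplane $H$ meeting $B$ in a small minimal blocking set with respect to hyperplanes of $H \cong \PG(n-1,p_0^h)$ with exponent $\geq e$ — the existence of such an $H$ requires care, and is where Lemma \ref{hulp}(3) together with the size bound Lemma \ref{hulp}(2) does the work, guaranteeing that $|H\cap B|$ is congruent to $1\bmod p^e$ and small enough. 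Once the base case $n=2$ is reached, Sziklai's proposition finishes it. I expect the bulk of the technical effort to be in verifying that the inherited blocking set in $H$ is genuinely \emph{minimal} (every point essential) and not merely a blocking set; this is exactly the kind of statement that follows from Lemma \ref{hulp}(3) plus Lemma 3.1 of Sz\H{o}nyi--Weiner quoted earlier, since the inherited set has size at most $2(p_0^h)^{(n-1)-1}$ and meets every line in $1\bmod p_0$ points.
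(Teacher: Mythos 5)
Your overall plan (induct on $n$, reduce to the planar case, finish with \cite[Proposition 4.17]{sziklai}) matches the paper's, but the reduction step you propose does not work. You want to pass to a hyperplane $H$ (or ultimately a plane $\pi$) such that $H\cap B$ is again a small minimal blocking set \emph{of} $H$. A blocking set with respect to hyperplanes of $H\cong\PG(n-1,p_0^h)$ needs at least $p_0^h+1$ points, whereas $|B|<3(p_0^h+1)/2$ and a typical hyperplane meets $B$ in $1\bmod p^e$ but only $O(1)$ points (a tangent hyperplane, which exists through every point by minimality, meets $B$ in exactly one point). So no proper subspace section of $B$ can be a blocking set of that subspace, and no counting argument will rescue this; Lemma \ref{hulp}(2)--(3) control congruences and sizes, not the blocking property of sections. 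The correct dimension-reduction tool is \emph{projection} of $B$ from a point onto a hyperplane, which preserves the size up to identifications and is known to send small minimal blocking sets to small minimal blocking sets (\cite[Corollary 3.2]{higher}); this is what the paper's proof uses.

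There is a second genuine gap: you assert that it suffices for the reduced blocking set to have exponent \emph{at least} $e$. It does not. By Lemma \ref{hulp}(3), a blocking set with exponent $e'>e$ meets every line in $1\bmod p^{e'}$ or zero points, so it has no $(p^e+1)$-secants at all, and the planar proposition applied to it only produces a $(p^{e'}+1)$-secant. One must ensure the exponent stays \emph{exactly} $e$ through the reduction. The paper arranges this by first exhibiting a line $L$ with $|L\cap B|\equiv 1\bmod p^e$ but $\not\equiv 1\bmod p^{e+1}$, then finding a plane $\pi$ through $L$ containing no further point of $B$ (any extra point would force at least $p^{2e}$ extra points of $B$ in that plane, so such a $\pi$ exists by counting), and projecting from a point of $\pi\setminus L$ onto a hyperplane through $L$: the line $L$ and its intersection number survive the projection unchanged, which pins the exponent of the projected set at $e$ and lets the induction go through. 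Your proposal has no mechanism preventing the exponent from increasing during the reduction, and its closing remark that ``a $(p_0+1)$-secant in $\pi$ is still a $(p_0+1)$-secant in the ambient space'' is true but irrelevant if the reduced set only admits longer secants.
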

\begin{proof} We proceed by induction, where the case $n=2$ is Proposition 4.17 of \cite{sziklai}. Since $B$ has exponent $e$, there is a hyperplane $H$ with $|H\cap B|=1$ mod $p^e$ and $|H\cap B|\neq 1$ mod $p^{e+1}$. It is clear that, since every subspace meets $B$ in $1 \mod p^e$ points and the number of points in $B$ is equal to $1 \mod p^e$, that we can find a line $L$ in $H$ with $\lambda=1 \mod p^e$ points and  $\lambda \neq 1 \mod p^{e+1}$. 
If a plane contains a point of $B$ outside of $L$, then this plane contains at least $p^{2e}$ points of $B$ outside of $L$ since the line through $2$ points of $B$ contains at least $p^e+1$ points of $B$ by Lemma \ref{hulp}(3). By Lemma \ref{hulp}(1), this implies that there exists a plane $\pi$ through $L$ without extra points of $B$. Let $P$ be a point of $\pi$, not on $L$, then the projection $B'$ of $B$ from $P$ onto a hyperplane through $L$ and not through $P$ is a small minimal blocking set in $\PG(n-1,p_0^h)$ (see e.g. \cite[Corollary 3.2]{higher}), which has, by our claim, exponent $e$. So by induction, we find a $(p_0+1)$-secant to $B'$ in $H'$. Since all points of $B$ in $\pi$ are on the line $L$, we have found a $(p_0+1)$-secant to $B$.
\end{proof}

\begin{corollary} \label{lemma1} Let $B$ be a small minimal blocking set with exponent $e$ in $\PG(n,p_0^h)$, $p_0:=p^e\geq 7$, $p$ prime, then there are at least $(p_0^{h-1}-4p_0^{h-2}+1)p_0+1$ points of $B$ that each lie on at least $p_0^{h-1}-4p_0^{h-2}+1$ $(p_0+1)$-secants to $B$.
\end{corollary}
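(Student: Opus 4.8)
The plan is to amplify the single $(p_0+1)$-secant guaranteed by Lemma~\ref{h1} into a full pencil of such secants and then read off the count. Throughout I write $N:=p_0^{h-1}-4p_0^{h-2}+1$.

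First I would invoke Lemma~\ref{h1}: since $p_0=p^e\geq 7$, there is a $(p_0+1)$-secant $\ell$ to $B$, and I fix a point $P\in \ell\cap B$. Because $P$ lies on the $(p_0+1)$-secant $\ell$, Lemma~\ref{hulp}(5) shows that $P$ lies on at least $N$ $(p_0+1)$-secants to $B$; I choose $N$ of them, $\ell_1=\ell,\ell_2,\dots,\ell_N$. These are pairwise distinct lines through $P$, hence $\ell_i\cap\ell_j=\{P\}$ whenever $i\neq j$.

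Next I would set $S:=\bigcup_{i=1}^{N}(\ell_i\cap B)$ and count. Each $\ell_i\cap B$ has $p_0+1$ points, exactly one of which is $P$, and the sets $\ell_i\cap B\setminus\{P\}$ are pairwise disjoint by the previous paragraph, so $|S|=1+Np_0=(p_0^{h-1}-4p_0^{h-2}+1)p_0+1$, which is the required cardinality. Finally, every point of $S$ lies on a $(p_0+1)$-secant of $B$ (namely $\ell_1$ in the case of $P$, and $\ell_i$ for any point of $\ell_i\cap B\setminus\{P\}$), so a second application of Lemma~\ref{hulp}(5), now to each point of $S$, shows that each of them lies on at least $N$ $(p_0+1)$-secants. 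This is exactly the claimed statement.

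I do not expect a serious obstacle: the argument is just a two-step amplification. The point that must be handled with care is the disjointness in the first step — because the $N$ secants through $P$ pairwise meet only in $P$, their $B$-points away from $P$ are genuinely disjoint and the count \emph{multiplies} to $Np_0+1$ rather than being merely additive. The other thing I would verify is that Lemma~\ref{hulp}(5) really delivers the constant $N=p_0^{h-1}-4p_0^{h-2}+1$ once the field $\F_{p_0^h}$ is viewed over its prime subfield $\F_p$ (so that the degree is $he$ and the exponent is $e$): this is immediate when $e=1$, and for $e>1$ it rests on the fact that a $(p_0+1)$-secant of $B$ is $\F_{p_0}$-linear by Lemma~\ref{hulp}(4), so the local secant counts near a point of $B$ are governed by $p_0$ and not by $p$.
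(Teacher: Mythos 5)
Your argument is exactly the paper's: Lemma~\ref{h1} supplies one $(p_0+1)$-secant, Lemma~\ref{hulp}(5) turns a point on it into a pencil of at least $p_0^{h-1}-4p_0^{h-2}+1$ such secants, and a second application of Lemma~\ref{hulp}(5) to the $Np_0+1$ points so obtained finishes the count; the paper merely leaves the disjointness bookkeeping and the second application implicit, which you spell out correctly. The normalisation issue you flag at the end (reconciling the exponent $e$ in Lemma~\ref{hulp}(5), stated over $\PG(n,p^t)$, with the base $p_0=p^e$ in the corollary) is real but is passed over silently in the paper's proof as well, so your treatment matches the paper's.
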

\begin{proof} From Lemma \ref{h1}, we get that there exists one $(p_0+1)$-secant to $B$. Lemma \ref{hulp} shows that through a point of this $(p_0+1)$-secant there are at least $p_0^{h-1}-4p_0^{h-2}+1$ $(p_0+1)$-secants. 
\end{proof}

%

As mentioned before, not all linear blocking sets are of R\'edei-type, and we will now show that a small minimal blocking set arises from Construction \ref{con1} if and only if it is linear. 

\begin{theorem}\label{final} If $p\geq 7$, a small minimal blocking set with respect to the hyperplanes in $\PG(n-1,q^t)$, $q=p^h$, $p$ prime, with exponent $e$ arises from Construction \ref{con1} in $\PG(nht/e-1,p^e)$ if and only if it is an $\F_{p^e}$-linear blocking set.
\end{theorem}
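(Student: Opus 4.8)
The plan is to prove both implications by reducing to the ground field $\F_{p^e}$, where the exponent equals the degree of the field extension and hence Corollary \ref{t2} applies. First I would set $p_0 := p^e$ and note that, since $B$ has exponent $e$, every subspace meets $B$ in $1 \bmod p_0$ or $0$ points (Lemma \ref{hulp}(3)); thus if $B$ is viewed through field reduction from $\PG(nht/e-1,p^e)$ rather than $\PG(nht-1,p)$, the relevant Desarguesian spread has dimension $t' := ht/e$ over $\F_{p^e}$, and $B = \B(B')$ for some minimal blocking set $B'$ with respect to $(nt'-t'-1)$-spaces by Theorem \ref{obs}. The ``only if'' direction is then the easier one: if $B$ arises from Construction \ref{con1} in $\PG(nt'-1,p^e)$, then $B = \B(K)$ where $K$ is a cone with $(nt'-t'-2)$-dimensional vertex $\Omega$ and planar base $\bar B$, and $K$ blocks all $(t'-1)$-spaces of the $(nt'-t'+1)$-space $\Pi = \langle\Omega,\Gamma\rangle$. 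Since $|B| \leq p_0^{t'} + 2p_0^{t'-1} + 1$ is small (using Lemma \ref{hulp}(2)) and the points of $B$ outside $\B(\Sigma)$ correspond bijectively to points of $K$, one gets that $K$ is itself a small minimal blocking set with respect to $(nt'-t'-1)$-spaces in $\PG(nt'-1,p^e)$, hence by Corollary \ref{t2} (exponent equal to the full $t'$ over $\F_{p^e}$) it is a $t'$-space; therefore $B = \B(t'\text{-space})$ is $\F_{p^e}$-linear, exactly as in the MPS proof.

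For the ``if'' direction, suppose $B$ is an $\F_{p^e}$-linear blocking set with respect to the hyperplanes in $\PG(n-1,q^t)$. Then $B = \B(\mu)$ where $\mu$ is an $(nt'-t')$-dimensional subspace of $\PG(nt'-1,p^e)$, viewing everything over $\F_{p^e}$ with Desarguesian $(t'-1)$-spread $\D$. The goal is to exhibit $\mu$ as the point set of a cone $K$ with an $(nt'-t'-2)$-dimensional vertex $\Omega$ and a planar base $\bar B$, living inside some $(nt'-t'+1)$-space $\Pi$, so that $\B(K) = \B(\mu) = B$; one then has to check the two-tangent-lines hypothesis of Theorem \ref{hoofd1} so that Construction \ref{con1} genuinely delivers a minimal blocking set. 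The natural choice is to take $\Pi$ to contain $\mu$ as a hyperplane-minus-one (i.e. $\Pi$ is an $(nt'-t'+1)$-space with $\mu \subset \Pi$ a codimension-$2$ subspace), pick $\Omega$ to be a codimension-$2$ subspace of $\mu$ (so $(nt'-t'-2)$-dimensional) chosen to be a $\D$-subspace or at least scattered in a convenient way, pick $\Gamma$ a plane of $\Pi$ skew to $\Omega$, and let $\bar B = K \cap \Gamma$ be the projection of $\mu$ from $\Omega$ onto $\Gamma$; since $\mu$ is a hyperplane of $\langle\Omega,\mu\rangle$ not through the full $\Omega$, this projection is a line of $\Gamma$, so $\bar B$ is just a line — and a line trivially has every point on $q$-many (hence at least $2$) tangent lines, so the hypothesis of Theorem \ref{hoofd1} is automatic. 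The subtlety is to arrange the incidence so that $K$, the cone with vertex $\Omega$ over this line, has exactly the point set $\mu$: this forces $\mu = \langle\Omega,\bar B\rangle$ with $\Omega$ and $\bar B$ spanning $\mu$ in complementary dimensions, which is a standard decomposition of a projective subspace into a subspace plus a skew line through a complementary subspace, valid whenever $\dim\mu \geq 2$, i.e. $nt'-t' \geq 2$.

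The main obstacle I expect is the ``if'' direction's bookkeeping about which linear sets are realizable: one must confirm that for \emph{every} $\F_{p^e}$-linear blocking set $B = \B(\mu)$ there is an ambient $(nt'-t'+1)$-space $\Pi$ inside $\PG(nt'-1,p^e)$ containing $\mu$ appropriately — this is immediate since $nt'-t'+1 \leq nt'-1$ precisely when $t' \geq 2$, i.e. when $ht/e \geq 2$, which holds as $e \leq t$ and $h \geq 1$ except in the degenerate case where $B$ is a single point's worth of structure; that edge case (a line of $\PG(n-1,q^t)$, i.e. $\mu$ a $t'$-space and $B$ a full line) must be handled separately, just as in the MPS theorem's proof, by observing that a line is trivially obtained by Construction \ref{con1} with $\bar B$ a line and $\Omega$ inside it. A secondary technical point is making sure the cone $K$ we build is a \emph{minimal} blocking set w.r.t.\ $(t'-1)$-spaces of $\Pi$ (not merely a blocking set), so that the hypotheses of Construction \ref{con1} and Theorem \ref{hoofd1} are literally met — but with $\bar B$ a line this is Lemma \ref{kegel} applied to the trivial minimal blocking set. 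Putting the two directions together with the exponent-reduction normalization gives the equivalence.
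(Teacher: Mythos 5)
Your ``if'' direction (linear $\Rightarrow$ Construction \ref{con1}) is essentially the paper's argument: write the linear set as $\B(\pi)$ for a suitable subspace $\pi$, decompose $\pi$ as a cone with vertex a codimension-$2$ subspace $\Omega$ and base a line, and note that a line satisfies the two-tangent hypothesis of Theorem \ref{hoofd1}. (Watch the bookkeeping, though: for blocking sets with respect to hyperplanes, $k=n-1$, so $\pi$ is $t_0$-dimensional, $\Omega$ is $(t_0-2)$-dimensional and $\Pi$ is $(t_0+1)$-dimensional; your dimensions $nt'-t'$, $nt'-t'-2$, $nt'-t'+1$ correspond to $k=1$ and are only correct when $n=2$.)

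The ``only if'' direction has a genuine gap, and it is precisely where the content of the theorem lies. You argue: $B$ small $\Rightarrow$ $K$ small (via a claimed bijection between the points of $B$ outside $\B(\Sigma)$ and the points of $K$) $\Rightarrow$ $K$ is a subspace by Corollary \ref{t2}. Both steps fail. First, there is no $\Sigma$ in Construction \ref{con1}: the space $\Pi=\langle\Omega,\Gamma\rangle$ sits in arbitrary position with respect to the Desarguesian spread, so a spread element may meet $\Pi$ (and hence $K$) in a subspace of positive dimension; consequently $|K|$ cannot be bounded by $|B|$ (the paper explicitly warns that the size of $B$ cannot be read off from the size of $\bar B$, and the remark after Theorem \ref{hoofd1} notes that minimality of $B$ does not even force minimality of $\bar B$). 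Second, and more fundamentally, Corollary \ref{t2} needs the exponent to equal the full degree of the field, i.e. exponent $e$ over the prime $p$ for a blocking set in $\PG(nt_0-1,p^e)$; a small minimal blocking set over the non-prime field $\F_{p^e}$ need not be a subspace (a cone over a Baer subplane is the standard counterexample), and establishing that $K$ has full exponent amounts to proving that $\bar B$ is a line --- which is exactly what must be shown. The paper closes this gap with a different argument: it produces (Corollary \ref{lemma1}) a point $P$ of $B$ lying on at least $q_0^{t_0-1}-4q_0^{t_0-2}+1$ $(q_0+1)$-secants, shows by counting that the spread element through $P$ meets $\Pi$ in a single point, converts (via Corollary \ref{h3}) more than $(q_0^{t_0-1}-1)/(q_0-1)$ of these secants into lines through $P$ contained in $K$, and then invokes Lemma \ref{hulp2} to force the base $\bar B$ to be a trivial (line) blocking set. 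Some argument of this kind is unavoidable; your proposal does not contain one.
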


\begin{proof} Put $q_0=p^e$, $q^t=q_0^{t_0}$ (hence $t_0=ht/e$). Let $B$ be an $\F_{p^e}$-linear blocking set, then $B=\B_{\D'}(\pi)$, where $\pi$ is a $t_0$-space in $\PG(nt_0-1,q_0)$ and $\D'$ is a Desarguesian $(t_0-1)$-spread in $\PG(nt_0-1,q_0)$. Let $\Omega$ be a $(t_0-2)$-dimensional subspace of $\pi$ and let $\Gamma$ be a plane meeting $\pi$ in a line $\bar{B}$, disjoint from $\Omega$. It is clear that $B=\B_{D'}(K)$, with $K$ the cone with vertex $\Omega$ and base $\bar{B}$; note that $\bar{B}$ is a minimal blocking set such that every point of $\bar{B}$ lies on at least $2$ tangent lines to $\bar{B}$. So this implies that $B$ is obtainable from Construction \ref{con1}.

So now assume that $B$ is a small minimal blocking set obtained from Construction \ref{con1} for some $\Pi=\langle \Omega,\Gamma\rangle$ and $\bar{B}$ where $\bar{B}$ is a minimal blocking set in the plane $\Gamma$ in $\PG(nt_0-1,q_0)$, where $\Gamma$ is skew from the $(t_0-2)$-space $\Omega$. We will show that $B=\B_{D'}(\pi)$, where $\pi$ is a $t_0$-space in $\PG(nt-1,q_0)$, where $\D'$ is  a Desarguesian $(t_0-1)$-spread in $\PG(nt_0-1,q_0)$.
Corollary \ref{lemma1} states that there exists a point $P$ of $B$ on at least $q_0^{t_0-1}-4q_0^{t_0-2}+1$ $(q_0+1)$-secants, and such that $P$ is not an element of $\B(\Omega)$ since $\vert \B(\Omega)\vert \leq \frac{q_0^{t-1}-1}{q_0-1}$. Consider the spread element $\B_{\D'}(P)$ and its intersection $S$ with the space $\Pi=\langle \Omega,\Gamma\rangle$. Let $L_1,\ldots,L_r$ be the $(q_0+1)$-secants through $P$ to $B$. Every $L_i$ corresponds to a $(2t_0-1)$-dimensional space in $\PG(nt_0-1,q_0)$, denote by $\pi_1,\ldots,\pi_r$ the subspaces of $\Pi$ that occur as the intersection of $\Pi$ with the $(2t_0-1)$-dimensional space corresponding to $L_i$. Note that two different spaces $\pi_i$ and $\pi_j$ meet exactly in the subspace $S$. From this, it follows that if $S$ has dimension $s$, then the number $r$ can be at most the number of spaces of dimension $s+1$ through the $s$-dimensional subspace $S$ in $\Pi$. This number equals $(q_0^{t_0-s}-1)/(q_0-1)$, which is smaller than $q_0^{t_0-1}-4q_0^{t_0-2}+1$ if $s$ is at least $1$. So this implies that $s=0$.

%
%

So $\B_{D'}(P) \cap \langle \Omega,\Gamma\rangle$ is a point. By an easy counting, we find that there are more than $(q_0^{t_0-1}-1)/(q_0-1)$ of the spaces $\pi_i$ that are $1$-or $2$-dimensional. By Corollary \ref{h3} these all give rise to a line through $P$, contained in $K$. So by Lemma \ref{hulp2}, we find that $\bar{B}$ is a line which proves the statement.
\end{proof}


\begin{thebibliography}{99}


\bibitem{Barlotti} A. Barlotti and J. Cofman. Finite Sperner spaces constructed from projective and affine spaces. {\em Abh. Math. Semin. Univ. Hamb.} {\bf 40} (1974), 231--241.
\bibitem{beutelspacher} A. Beutelspacher. Blocking sets and partial spreads in finite projective spaces. {\em Geom. Dedicata} {\bf 9 (4)} (1980), 425-- 449.
\bibitem{BlLa2000} A. Blokhuis and M. Lavrauw. Scattered spaces with respect to a spread in $\PG(n,q)$. {\em Geom. Dedicata} {\bf 81 (1-3)} (2000), 231--243.
\bibitem{bokler} M. Bokler. Lower bounds for the cardinality of minimal blocking sets in projective spaces. {\em Discrete Math.} {\bf 270 (1--3)}, 13--31.

\bibitem{Br} A.E. Brouwer and A. Schrijver. The blocking number of an affine space. {\em J. Combin. Theory, Ser. A } {\bf 24} (2) (1978), 251--253.

\bibitem{costa} S. Costa. Blocking sets in $\PG(r,q^{2n})$. Electronic Notes in Discrete Mathematics {\bf 40} (2013), 77--81.

\bibitem{GAP} J. Bamberg, A. Betten, {Ph}. Cara, J. De Beule, M. Lavrauw, and  M. Neunh\"offer.
 {\em Finite Incidence Geometry}.
 FinInG -- a {GAP} package, version 1.0, 2014.
\bibitem{heim} U. Heim. Proper blocking sets in projective spaces. {\em Discrete Math.} {\bf 174 (1--3)}, 167--176.
\bibitem{Jamison} R.E. Jamison. Covering finite fields with cosets of subspaces. {\em J. Combin. Theory, Ser. A} {\bf 22} (3) (1977), 253--266. 
\bibitem{thesis}   M. Lavrauw. Scattered spaces with respect to spreads, and eggs in finite projective spaces. PhD Dissertation, Eindhoven University of Technology, Eindhoven, 2001.

\bibitem{lavrauw}M. Lavrauw and G. Van de Voorde. On linear sets on a projective line. {\em Des. Codes Cryptogr.} {\bf 56 (2-3)} (2010), 89--104.
\bibitem{LaVa13} M. Lavrauw and G. Van de Voorde. Field reduction in finite geometry. {\em Topics in finite fields}. Contemp. Math., 632, Amer. Math. Soc., Providence, RI, 2010.

\bibitem{L1} { G. Lunardon}.  Normal spreads. \emph{Geom. Dedicata} {\bf 75 (3)} (1999), 245--261.
\bibitem{L2} G. Lunardon. Linear $k$-blocking sets. {\em Combinatorica} {\bf 21 (4)} (2001), 571--581.
\bibitem{pol} F. Mazzocca and O. Polverino. Blocking sets in $\PG(2,q^n)$ from cones of $\PG(2n,q)$. {\em J Algebraic
Combin} {\bf 24(1)} (2006), 61--81
\bibitem{Leo}  F. Mazzocca, O. Polverino and L. Storme. Blocking sets in $\PG(r,q^n)$. {\em Des. Codes Cryptogr.} {\bf 44 (1--3)} (2007), 97--113.

\bibitem{polito2} { P. Polito and O. Polverino}. On small blocking sets. {\em  Combinatorica } {\bf 18 (1)}  (1998), 133--137.


\bibitem{segre} B. Segre. Teoria di Galois, fibrazioni proiettive e geometrie non desarguesiane. {\em Ann. Mat. Pura Appl.} {\bf 64} (1964), 1--76.
\bibitem{Linear} L. Storme and P. Sziklai. Linear point sets and R\'edei type $k$-blocking sets in $\PG(n,q)$. {\em J. Algebraic Combin.} {\bf 14 (3)} (2001), 221--228
\bibitem{large} T. Sz\H{o}nyi, A. Cossidente, A. G\'acs, C. Mengy\'an, A. Siciliano and Zs. Weiner. On large minimal blocking
sets in $\PG(2,q)$. {\em J Comb Designs} {\bf 13} (2005), 25--41.
\bibitem{sz} { T. Sz\H{o}nyi and Zs. Weiner}. Small blocking sets in higher dimensions.  {\em J. Combin. Theory, Ser. A}  {\bf 95 (1)}  (2001), 88--101.
\bibitem{sziklai} P. Sziklai. On small blocking sets and their linearity. {\em J. Combin. Theory, Ser. A} {\bf 115 (7)}  (2008), 1167--1182.
\bibitem{higher} G. Van de Voorde. On the linearity of higher-dimensional blocking sets. {\em Electronic J. Combin.} {\bf 17(1)} (2010), Research Paper 174, 16 pp.


\end{thebibliography}
\end{document}